\documentclass[a4,12pt]{amsart}
%%%%%%%%%%%%%%%%%%%%%%%%%%%%%%%%%%%%%%%%%%%%%%%%%%%%%%%%
\oddsidemargin 0mm
\evensidemargin 0mm
\topmargin 0mm
\textwidth 160mm
\textheight 230mm
\tolerance=9999
%%%%%%%%%%%%%%%%%%%%%%%%%%%%%%%%%%%%%%%%%%%%%%%%%%%%%%%%
\usepackage{amssymb,amstext,amsmath,amscd,amsthm,amsfonts,enumerate,graphicx,latexsym, hyperref,cleveref}
\usepackage[usenames]{color}
\usepackage[all]{xy}

 \usepackage{comment}
 \usepackage{mathrsfs}

%%%%%%%%%%%%%%%%%%%%%%%%%%%%%%%%%%%%%%%%%%%%%%%%%%%%%%%%
\newtheorem{thm}{Theorem}[section]

\newtheorem{cor}[thm]{Corollary}

\newtheorem{lem}[thm]{Lemma}
\newtheorem{prop}[thm]{Proposition}
\newtheorem{ques}[thm]{Question}
\newtheorem{theoremx}{Theorem}
\newtheorem*{theorem*}{Theorem}

%%%%%%%%%%%%%%%%%%%%%%%%%%%%%%%%%%%%%%%%%%%%%%%%%%%%%%%%%%%%%%%%%
\theoremstyle{definition}
\newtheorem{dfn}[thm]{Definition}
\newtheorem{rem}[thm]{Remark}

\newtheorem{ex}[thm]{Example}

\newtheorem*{claim*}{Claim}
\theoremstyle{remark}

\newcommand{\etalchar}[1]{$^{#1}$}
%%%%%%%%%%%%%%%%%%%%%%%%%%%%%%%%%%%%%%%%%%%%%%%%%%%%%%%%%%%%%%%%%

\numberwithin{equation}{thm}
%%%%%%%%%%%%%%%%%%%%%%%%%%%%%%%%%%%%%%%%%%%%%%%%%%%%%%%%%%%%%%%%%

\def\cm{\operatorname{CM}}
\def\rf{\operatorname{Ref}}
\def\cond{\mathfrak c}

\def\End{\operatorname{End}}
\def\Ext{\operatorname{Ext}}

\def\Hom{\operatorname{Hom}}

\def\id{\mathrm{id}}
\def\im{\operatorname{Im}}

\def\m{\mathfrak{m}}
\def\Min{\operatorname{Min}}
\def\mod{\operatorname{mod}}

\def\spec{\operatorname{Spec}}
\def\syz{\Omega}

\def\tr{\operatorname{tr}}

\def\tracecat{\operatorname{T}}
\def\reftracecat{\operatorname{RT}}
\def\ulrichcat{\operatorname{Ul}}

\def\core{\operatorname{core}}
\def\br{\operatorname{Bir}}
\def\type{\operatorname{type}}
\def\cmf{\operatorname{CM_{full}}}
\def\supp{\operatorname{Supp}}
\def\chr{\operatorname{char}}

\newcommand{\ses}[3]{0 \to {#1} \to {#2} \to {#3} \to 0}
\newcommand{\ds}{\displaystyle}
%%%%%%%%%%%%%%%%%%%%%%%%%%%%%%%%%%%%%%%%%%%%%%%%%%%%%%%%%%%%%%%%%
\begin{document}
%\allowdisplaybreaks
\setlength{\baselineskip}{14pt}
\title{On reflexive and $I$-Ulrich modules over curve singularities}
%\date{September 4, 2014}
\author{Hailong Dao}
\address{Department of Mathematics, University of Kansas, Lawrence, KS 66045-7523, USA}
\email{hdao@ku.edu}
\urladdr{https://www.math.ku.edu/~hdao/}

\author{Sarasij Maitra}
\address{Department of Mathematics, University of Virginia, 141 Cabell Drive, Kerchof Hall
	P.O. Box 400137
	Charlottesville, VA 22904}
\email{sm3vg@virginia.edu}
\urladdr{https://sarasij93.github.io/}

\author{Prashanth Sridhar}
\address{
Department of Mathematics, University of Kansas, Lawrence, KS 66045-7523, USA}
\address{School of Mathematics, Tata Institute of Fundamental Research, Homi Bhabha Road, Mumbai, India 400005}

\email{prashanth.sridhar0218@gmail.com}
\urladdr{https://sites.google.com/view/prashanthsridhar/home}
%\thanks{2010 {\em Mathematics Subject Classification.} 13C14, 13H10}

\begin{abstract}
We study reflexive modules over one dimensional Cohen-Macaulay rings. Our key technique exploits the concept of $I$-Ulrich modules. 
\end{abstract}
\keywords{reflexive modules, $I$-Ulrich modules, trace ideals, conductors, birational extensions, Gorenstein rings, canonical ideal.} %
\subjclass[2010]{}
\maketitle
%\tableofcontents
%%%%%%%%%%%%%%%%%%%%%%%%%%%%%%%%%%%%%%%%%%%%%%%%%%
%\section*{Convention}

%%%%%%%%%%%%%%%%%%%%%%%%%%%%%%%%%%%%%%%%%%%%%%%%%%%%%%%%%%%%%%%
\section{Introduction}
Notions of reflexivity have been studied throughout various branches  of  mathematics. Over a commutative ring $R$, recall that a module $M$ is called reflexive if the natural map $M \to M^{**}$ is an isomorphism, where $M^*$ denotes  $\Hom_R(M,R)$. When $R$ is a field, any finite dimensional vector space is reflexive, a fundamental fact in linear algebra. Over general rings,  from what we found in the  literature, these modules were studied  in the works of Dieudonn{\'e} \cite{dieudonne1958remarks}, Morita \cite{morita1958duality} and Bass (\cite{bass84}, where the name ``reflexive" first appeared) before being treated formally in \cite{bourbaki1965diviseurs}. They are now  classical and ubiquitous  objects in modern commutative algebra and algebraic geometry. In this paper we seek to understand the following basic question: how common are they?

Note that since being reflexive  is equivalent to being $(S_2)$ and reflexive in codimension one (over an $(S_2)$ ring, see \cite[Proposition 1.4.1]{bruns_herzog_1998}), understanding the one dimensional (local) case is key to understanding reflexivity in general.

Assume now that $(R,\m)$ is a one dimensional Noetherian local Cohen-Macaulay ring. The primary examples are curve singularities or localized coordinate rings of points in projective space. It turns out that the answers to our basic question can be quite subtle. If $R$ is Gorenstein, then any maximal Cohen-Macaulay module is reflexive (so in our dimension one situation, any torsionless module is reflexive). However, reflexive modules or even ideals are poorly understood when $R$ is not Gorenstein. Some general facts are known, for instance if $R$ is reduced, any second syzygy or $R$-dual module is reflexive. However, we found very few concrete examples in the literature: only the maximal ideal and the conductor of the integral closure of $R$. How many can there be? Can we classify them? When is an ideal of small colength reflexive?  For instance, if $R$ is $\mathbb C[[t^3,t^4,t^5]]$ then any indecomposable reflexive module is either  isomorphic to $R$ or the maximal ideal, but the reason is far from clear.

Our results will give answers to the above questions in many cases. A key point in our investigation is  a systematic application of the concept of $I$-Ulrich modules, where $I$ is any  ideal of height one in $R$. A module $M$ is called $I$-Ulrich if $e_I(M)=\ell(M/IM)$ where $e_I(M)$ denotes the Hilbert Samuel multiplicity of $M$ with respect to $I$ and $\ell(\cdot)$ denotes length.  This is a straight generalization of the notion of Ulrich modules, which is just the case $I=\m$ (\cite{ulrich1984gorenstein} and  \cite{brennan1987maximally}). Note also that $I$ is $I$-Ulrich simply says that $I$ is stable, a concept heavily used in Lipman's work on Arf rings \cite{lipman1971stable}. 

Of course, the study of $\m$-Ulrich modules and certain variants has been an active area of research for quite some time now. The papers closest to the spirit of our work are perhaps \cite{goto2014ulrich},\cite{goto2016ulrich},\cite{herzog1991linear}, \cite{kobayashi2019ulrich}, \cite{nakajima2017ulrich} among many other sources.

We will show that $\omega_R$-Ulrich modules are reflexive, and they form a category critical to the abundance of reflexive modules (here $\omega_R$ is a canonical ideal of $R$). For instance, any maximal Cohen-Macaulay module over an $\omega_R$-Ulrich finite extension of $R$ is reflexive. Furthermore, a reflexive birational extension of $R$ is Gorenstein  if and only if its conductor $I$ is $I$-Ulrich and $\omega_R$-Ulrich. 

We also make frequent use of birational extensions of $R$ and trace ideals. This is heavily inspired by some recent interesting work from Kobayashi \cite{kobayashi2017syzygies}, Goto-Isobe-Kumashiro \cite{goto2020correspondence}, Faber \cite{faber2019trace} and Herzog-Hibi-Stamate \cite{herzog2019trace}.

We now describe the structure and the main results of our paper. Sections $2$ and $3$ collect basic results on reflexive modules, trace ideals and birational extensions to be used in later sections. Section $4$ develops the concept of $I$-Ulrich modules for any ideal $I$ of height one in $R$, see \Cref{defIulrich}. We give various characterizations of $I$-Ulrichness (\Cref{I-UlrichThm}).  We show the closedness of the subcategory of $I$-Ulrich modules under various operations, prompting the existence of a lattice like structure for $I$-Ulrich ideals, which can be referred to as an \textit{Ulrich} lattice. We establish tests for $I$-Ulrichness using blow-up algebras and the \textit{core} of $I$ (recall that the core of an ideal is the intersection of all minimal reductions). Finally, we show that an $\omega_R$-Ulrich $M$ satisfies $\Hom_R(M,R) \cong 
\Hom_R(M,\omega_R)$, and that such a module is reflexive. 

The later sections deal with applications. In Section $5$, under mild conditions, we are able to completely characterize  extensions $S$ of $R$ that are ``strongly reflexive" in the following sense: any maximal Cohen-Macaulay $S$-module is reflexive over $R$. Interestingly, in the birational case, this classification involves the core of the canonical ideal of $R$. 

\begin{theoremx}[\Cref{totallyreflexive} and \Cref{refthm}]\label{t1}
Suppose that $R$ is a one-dimensional Cohen-Macaulay local ring with a canonical ideal $\omega_R$. Let $S$ be a module finite $R$-algebra such that $S$ is a maximal Cohen-Macaulay module over $R$. The following are equivalent (for the last two, assume that  $S$ is a birational extension and the residue field of $R$ is infinite):

\begin{enumerate}
    \item Any  maximal Cohen-Macaulay $S$-module is $R$-reflexive. 
    \item $\omega_S$ is $R$-reflexive.
    \item $\Hom_R(S,R)\cong \Hom_R(S,\omega_R)$.
    \item $\omega_RS \cong S$.
    \item $S$ is $\omega_R$-Ulrich as an $R$-module.
    \item $S$ is $R$-reflexive and the conductor of $S$ to $R$ lies inside $(x):\omega_R$ for some principal reduction $x$ of $\omega_R$.
    \item $S$ is $R$-reflexive and the conductor of $S$ to $R$ lies inside $\core(\omega_R):_R \omega_R$. 
\end{enumerate}
\end{theoremx}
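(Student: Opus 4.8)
Throughout write $Q$ for the total quotient ring of $R$, $(-)^{*}=\Hom_R(-,R)$, $(-)^{\vee}=\Hom_R(-,\omega_R)$ and $J=\omega_R S$, an ideal of $S$ containing a nonzerodivisor. I will use the following facts, all either standard or proved earlier in the paper: $R$ is generically Gorenstein (so $\omega_R$ is realized as a height one ideal and $\Hom_R(\omega_R,\omega_R)=R$); $(-)^{\vee}$ is a duality on the category of MCM — equivalently, since $\dim R=1$, torsion-free — $R$-modules; $\omega_S:=\Hom_R(S,\omega_R)$ is a canonical module of $S$, is MCM over $S$, and satisfies $\omega_S^{\vee}\cong S$ by canonical duality over $R$; and for every torsion-free $R$-module $M$ there is a natural isomorphism $M^{*}\cong(\omega_R M)^{\vee}$ coming from $R=\Hom_R(\omega_R,\omega_R)$ after killing the torsion kernel of $M\otimes_R\omega_R\twoheadrightarrow\omega_R M$. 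The plan is to prove $(5)\Rightarrow(1)\Rightarrow(2)\Rightarrow(3)\Leftrightarrow(4)\Rightarrow(5)$, and then, under the extra hypotheses, $(5)\Rightarrow(7)\Rightarrow(6)\Rightarrow(5)$.

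For $(5)\Rightarrow(1)$: by \Cref{I-UlrichThm} (after the faithfully flat base change $R\rightsquigarrow R[t]_{\m R[t]}$, harmless since the $\omega_R$-Ulrich property is phrased through lengths and multiplicities), $\omega_R S=xS$ for a principal reduction $x$ of $\omega_R$; as $S$ is semilocal this says $J=\omega_R S$ is free of rank one over $S$, hence $\omega_R M=JM\cong M$ for every MCM $S$-module $M$. Feeding $\omega_R M\cong M$ into $M^{*}\cong(\omega_R M)^{\vee}$ gives $M^{*}\cong M^{\vee}$; applying the formula once more to the $S$-module $M^{*}$ (for which again $\omega_R M^{*}\cong M^{*}$) and invoking $(-)^{\vee\vee}\cong\mathrm{id}$ yields $M^{**}\cong M$, compatibly with the biduality map, so $M$ is $R$-reflexive. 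In particular $S$ is $R$-reflexive, and since $\omega_S$ is MCM over $S$ this proves $(1)\Rightarrow(2)$ as well. For $(3)\Leftrightarrow(4)$: with $M=S$ the displayed formula gives $\Hom_R(S,R)\cong(\omega_R S)^{\vee}=\Hom_R(\omega_R S,\omega_R)$, so $(3)$ reads $\Hom_R(\omega_R S,\omega_R)\cong\Hom_R(S,\omega_R)$; both sides are $(-)^{\vee}$ of MCM modules, so applying $(-)^{\vee}$ and canonical duality shows $(3)$ is equivalent to $\omega_R S\cong S$, i.e. $(4)$. For $(4)\Rightarrow(5)$: writing $\omega_R S=yS$ with $y$ a nonzerodivisor, $\omega_R^{n}S=(\omega_R S)^{n}=y^{n}S$, so $\ell(S/\omega_R^{n}S)=n\,\ell(S/yS)$ is linear in $n$; hence $e_{\omega_R}(S)=\ell(S/yS)=\ell(S/\omega_R S)$, which is $(5)$.

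The implication $(2)\Rightarrow(3)$ is the technical heart; I will deduce $(2)\Rightarrow(4)$. Iterating $M^{*}\cong(\omega_R M)^{\vee}$ from $M=\omega_S$, and using that $\omega_S^{*}$ is an $S$-module with $\omega_R\omega_S^{*}=J\omega_S^{*}$ and that $\omega_R\omega_S=J\omega_S$, one gets $\omega_S^{**}\cong\bigl(\omega_R\,(J\omega_S)^{\vee}\bigr)^{\vee}$; thus $(2)$, i.e.\ $\omega_S^{**}\cong\omega_S$, is equivalent by canonical duality to $\omega_R\,(J\omega_S)^{\vee}\cong\omega_S^{\vee}\cong S$. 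Since $J\omega_S$ is an $S$-module, adjunction together with $\omega_S^{\vee}\cong S$ gives $(J\omega_S)^{\vee}=\Hom_R(J\omega_S,\omega_R)\cong\Hom_S\!\bigl(J,\Hom_R(\omega_S,\omega_R)\bigr)\cong\Hom_S(J,S)$, and, unwinding these identifications, $\omega_R\,(J\omega_S)^{\vee}$ becomes the trace ideal $\tr_S(J)\subseteq S$ of $J$. A cyclic idempotent ideal of a (semilocal, or any) ring is the unit ideal, so $(2)$ forces $\tr_S(J)=S$, i.e.\ $J=\omega_R S$ is an invertible ideal of the semilocal ring $S$, hence free of rank one — that is, $(4)$. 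The main obstacle is making the last two identifications rigorous: keeping track of the torsion submodules under the two dualizations, and matching $\omega_R\,(J\omega_S)^{\vee}$ with $\tr_S(J)$. This is cleanest when $S$ is birational over $R$: then $\omega_S$, $J$ and all the $\Hom$'s are honest fractional ideals in $Q$, one has the clean identity $(J\omega_S)^{\vee}=(S:_Q J)$, and $(2)$ becomes literally $J\,(S:_Q J)\cong S$.

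Finally assume $S$ is birational over $R$ with infinite residue field, and let $\cond$ be the conductor of $S$ to $R$, so $\cond=(R:_Q S)=\Hom_R(S,R)$ is an ideal of both $R$ and $S$, and $S$ is $R$-reflexive if and only if $S=(R:_Q(R:_Q S))=(R:_Q\cond)$. For $(5)\Rightarrow(7)$: by \Cref{I-UlrichThm}, $\omega_R S=xS$ for \emph{every} minimal reduction $x$ of $\omega_R$; as $\cond$ is an ideal of $S$ and of $R$, $\cond\,\omega_R=\cond\,(\omega_R S)=\cond\,xS=x\cond\subseteq(x)$, and intersecting over all such $x$ gives $\cond\,\omega_R\subseteq\core(\omega_R)$, i.e.\ $\cond\subseteq\core(\omega_R):_R\omega_R$; combined with the $R$-reflexivity of $S$ furnished by $(5)\Rightarrow(1)$, this is $(7)$. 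The implication $(7)\Rightarrow(6)$ is immediate because $\core(\omega_R)\subseteq(x)$ for any minimal reduction $x$, at least one of which exists since the residue field is infinite. For $(6)\Rightarrow(5)$: from $\cond\,\omega_R\subseteq(x)$ we get $x^{-1}\omega_R\cdot\cond\subseteq R$, hence $x^{-1}\omega_R\subseteq(R:_Q\cond)=S$ using the $R$-reflexivity of $S$; therefore $\omega_R\subseteq xS$, so $\omega_R S\subseteq xS\subseteq\omega_R S$, giving $\omega_R S=xS\cong S$, which is $(4)$ and hence $(5)$.
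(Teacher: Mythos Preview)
Most of your implications are handled correctly and overlap with the paper's arguments: $(5)\Rightarrow(1)$ via $M^*\cong M^\vee$ is essentially the content of \Cref{corM*} and \Cref{omegaulrich}; $(3)\Leftrightarrow(4)\Leftrightarrow(5)$ is \Cref{corM*} plus \Cref{I-UlrichThm}; and your cycle $(5)\Rightarrow(7)\Rightarrow(6)\Rightarrow(5)$ in the birational case matches the paper's use of \Cref{refIUl2}.

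The genuine problem is your $(2)\Rightarrow(4)$. You correctly identify this as the technical heart and you explicitly acknowledge that the identification of $\omega_R\,(J\omega_S)^{\vee}$ with $\tr_S(J)$ is only clean when $S$ is birational. In the general case this is not merely a matter of bookkeeping torsion: once $(J\omega_S)^{\vee}\cong\Hom_S(J,S)$ is established, the submodule $J\cdot\Hom_S(J,S)\subseteq\Hom_S(J,S)$ is not a priori the same object as the trace ideal $\tr_S(J)\subseteq S$, and you give no mechanism for comparing them. Moreover, your final step ``a cyclic idempotent ideal of a semilocal ring is the unit ideal'' is not applicable: trace ideals satisfy $\tr(\tr(I))=\tr(I)$ but are \emph{not} idempotent in the sense $I^2=I$. (The conclusion $\tr_S(J)=S$ from $\tr_S(J)\cong S$ can be rescued, since a principal regular trace ideal is the unit ideal, but your stated reason is wrong.) As written, then, $(2)\Rightarrow(4)$ is proved only for birational $S$, leaving a gap in the general statement.

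The paper's route for $(2)\Rightarrow(3)$ is entirely different and avoids these difficulties. It first proves $(2)\Rightarrow(1)$ directly: given $M\in\cm(S)$, cover $M^{\vee}$ by a free $S$-module and apply $(-)^{\vee}$ to obtain an embedding $M\hookrightarrow\omega_S^n$ with torsionless cokernel, whence $M$ is reflexive by \Cref{reflem} since $\omega_S^n$ is reflexive by hypothesis. Then for $(2)\Rightarrow(3)$ it covers $\omega_S^*$ by $S^n$ and applies $(-)^*$ to get an embedding $\omega_S\hookrightarrow(S^n)^*$ in $\cm(S)$; this splits because $\Ext^1_S(-,\omega_S)=0$ on $\cm(S)$, so $\omega_S^*$ is a direct $S$-summand of $S^n$ (using that $S$ is now known to be reflexive and that $\Hom_R(-,R)$ is $S$-linear, cf.\ \Cref{5.1helplemma}). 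Thus $\omega_S^*$ is $S$-projective of constant rank one over the semilocal ring $S$, hence free, giving $\omega_S\cong\omega_S^{**}\cong S^*$. This injectivity-of-$\omega_S$ splitting is the idea you need to close the gap for non-birational $S$.
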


The theorem above extends a result of Kobayashi \cite[Theorem 2.14]{kobayashi2017syzygies}.  Also, for $S$ satisfying one of the conditions of \Cref{t1}, any contracted ideal $IS\cap R$ is reflexive (\Cref{integrallyclosedreflexive}). Such a statement generalizes a result by Corso-Huneke-Katz-Vasconcelos that if $R$ is a domain and the integral closure $\overline R$ is finite over $R$, then any integrally closed ideal is reflexive  \cite[Proposition 2.14]{corso2005integral}. 

Our next Section $6$ deals with various ``finiteness results", where we study when certain subcategories or subsets of $\cm(R)$ are finite or finite up to isomorphism. One main result roughly says that if the conductor of $R$ has small colength, then there are only finitely many reflexive ideals that contain a regular element, up to isomorphism. 

\begin{theoremx}[\Cref{small}]\label{t2}
Let $R$ be a one-dimensional Cohen-Macaulay local ring with infinite residue field and  conductor ideal $\cond$. Assume that either:
\begin{enumerate}
\item $\ds \ell(R/\mathfrak{c})\leq 3$, or
\item $\ds \ell(R/\mathfrak{c})=4$ and $R$ has minimal multiplicity.
\end{enumerate}
Then the category of regular reflexive ideals of $R$ is of finite type. 
\end{theoremx}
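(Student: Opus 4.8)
The plan is to reduce the problem, up to isomorphism, to ideals of the Artinian local ring $A:=R/\cond$ and then analyze those directly. For the reduction: by the basic facts on reflexive modules from Sections~2--3, a regular reflexive ideal $I$ satisfies $I\cong(I^{*})^{*}$, and $I^{*}=\Hom_R(I,R)$ is torsion-free of rank one, hence isomorphic to a regular fractional ideal $L$ in the total quotient ring $Q$. Since $\overline R$ is a finite product of principal ideal domains, $L\overline R=g\overline R$ for a nonzerodivisor $g$; as $g^{-1}L$ generates the unit ideal of $\overline R$, prime avoidance produces a unit $u$ of $\overline R$ with $ug\in L$, whence $R\subseteq L':=(ug)^{-1}L\subseteq\overline R$. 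Then $I\cong (L')^{*}=R:_Q L'$, and $R\subseteq L'\subseteq\overline R$ forces $\cond=R:_Q\overline R\subseteq R:_Q L'\subseteq R:_Q R=R$. Thus every regular reflexive ideal is isomorphic to a fractional ideal $M$ with $\cond\subseteq M\subseteq R$, i.e. to an ideal of $A$, and it suffices to bound the number of $R$-isomorphism classes of such $M$ (equivalently of the reflexive such $M$).

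\emph{Case $\ell(R/\cond)\le 3$.} Here $A$ is one of $k$, $k[\epsilon]/(\epsilon^{2})$, $k[x]/(x^{3})$, or $k\oplus V$ with $V^{2}=0$ and $\dim_k V=2$. In the first three, each possible length is realized by a single ideal of $A$, so finiteness is immediate, and the reflexive ideals are a subset. In the last case the only infinite family of ideals is the pencil of lines inside $V\cong\m_A$, and I would show these collapse to finitely many $R$-isomorphism classes by a \emph{moving lemma}: multiplication by a suitable unit $u_0+u_2 t^{2}+\cdots$ of $\overline R$ carries one such ideal onto another while remaining inside $R$, because the discrepancy between the two lines lives in the $2$-dimensional space $V$ and can be absorbed (the one exception being the distinguished line coming from the smallest value). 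Hence finitely many classes; note that reflexivity is used here only in the reduction step.

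\emph{Case $\ell(R/\cond)=4$, $R$ of minimal multiplicity.} Now $A$ is a length-four Artinian local ring, i.e. $k[x]/(x^{4})$, a ring with $\dim_k\m_A/\m_A^{2}=2$ (such as $k[x,y]/(x^{2},y^{2})$ or $k[x,y]/(x^{2},xy,y^{3})$), or $k\oplus V$ with $V^{2}=0$ and $\dim_k V=3$. For all but the last, the list of isomorphism classes of ideals of $A$ is again finite---the only new feature being that the length-two ideals of $k[x,y]/(x^{2},y^{2})$ split into two classes rather than one---and the moving lemma disposes of the finitely many pencils that occur. The genuine difficulty is $A=k\oplus V$ with $\dim_k V=3$: the $1$- and $2$-dimensional subspaces of $V$ now vary in two-parameter families and the moving lemma is too weak to collapse them. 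This is exactly where both hypotheses enter: minimal multiplicity, through $\m^{2}=x\m$ and the ensuing control of $\overline R$ and of the blow-up $x^{-1}\m$ of $\m$, sharply restricts which such $A$ can occur, and then reflexivity, i.e. $M=M^{**}$, forces each surviving $M$ into one of finitely many positions (one can alternatively feed this case into the $\omega_R$-Ulrich machinery of Sections~4--5). Establishing that no infinite family of pairwise non-isomorphic reflexive subspace-ideals survives is, I expect, the main obstacle of the proof.
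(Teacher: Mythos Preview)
Your approach diverges sharply from the paper's after the reduction to $\cond\subseteq I\subseteq R$, and both of your case analyses have genuine gaps. In case~(1) with $A=k\oplus V$, $\dim_k V=2$, you assert without proof a ``moving lemma'' collapsing the pencil of lines in $V$ to finitely many $R$-isomorphism classes; the parenthetical (``the discrepancy\dots\ can be absorbed'') is not an argument, and the notation $u_0+u_2t^2+\cdots$ tacitly assumes $\overline R=k[[t]]$, i.e.\ that $R$ is an analytically irreducible domain, which is not in the hypotheses. You also claim reflexivity is not needed beyond the reduction, but you give no evidence for this. In case~(2) you explicitly concede incompleteness: your final sentence says that ``establishing that no infinite family\dots\ survives is, I expect, the main obstacle,'' which is precisely the content of the theorem. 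The assertion that minimal multiplicity ``sharply restricts which such $A$ can occur'' is not developed, and the closing appeal to the $\omega_R$-Ulrich machinery is a gesture rather than a step.

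The paper avoids classifying $R/\cond$ altogether. For a non-principal reflexive $I\supseteq\cond$ with principal reduction $x$, it examines the chain
\[
\cond\ \subseteq\ (x):_R\overline I\ \subseteq\ (x):_R I\ \subseteq\ \m.
\]
Any equality in this chain forces $I$ to be isomorphic to $\cond$, to $\m^*$, or to a member of the finite set $\mathscr I_\cond$ of integrally closed ideals above $\cond$ (Reduction Lemma~6.6 together with Proposition~6.4); reflexivity is used here, not just in the initial reduction, since $(x):_R\overline I=(x):_R I$ gives $I^*\cong\overline I^{\,*}$ and hence $I=\overline I$. When $\ell(R/\cond)\le 3$ there is no room for three strict inclusions, so case~(1) is immediate. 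When $\ell(R/\cond)=4$, the strict chain forces $J:=(x):_R I\cong I^*$ to have colength exactly~$2$; minimal multiplicity then enters through Proposition~3.5, which says a reflexive colength-two ideal is integrally closed or principal, so $J\in\mathscr I_\cond$ and $I\cong J^*$. The idea you are missing is to pass to the dual $(x):_R I$ rather than attack $I$ directly: this converts the question into one about colength-two ideals, where minimal multiplicity has immediate leverage.
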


We also characterize rings with up  to three trace ideals (\Cref{TR3}). We observe that if $S=\End_R(\m)$ has finite representation type, then $R$ has only finitely many indecomposable reflexive modules up to isomorphism (\Cref{propfinRef}). In particular, seminormal singularities have ``finite reflexive type" (\Cref{seminormalcor}).

In Section $7$ we give some further applications on almost Gorenstein rings. We show that in such a ring, all powers of trace ideals are reflexive (\Cref{almostgorprop}). We also characterize reflexive birational extensions of $R$ which are Gorenstein:

\begin{theoremx}[\Cref{Gorclassificationreflexive}]\label{t3}
Suppose that $R$ is a one-dimensional Cohen-Macaulay local ring with a canonical ideal $\omega_R$. Let $S$ be a finite birational extension of $R$ which is reflexive as an $R$-module. Let $I=\cond_R(S)$ be the conductor of $S$ in $R$. The following are equivalent:
\begin{enumerate}
    \item $S$ is Gorenstein. 
    \item $I$ is $I$-Ulrich and $\omega_R$-Ulrich. That is $I\cong I^2\cong I\omega_R$.
\end{enumerate}

\end{theoremx}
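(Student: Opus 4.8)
The plan is to exploit the interplay between $S$ being Gorenstein, the canonical module $\omega_S$, and the conductor ideal $I = \cond_R(S)$. The starting point is the well-known formula for the canonical module of a birational extension: since $S$ is a finite birational extension of $R$ that is maximal Cohen-Macaulay over $R$, one has $\omega_S \cong \Hom_R(S,\omega_R)$, and moreover $I = \cond_R(S) = \ann_R(S/R)$ plays a dual role, namely $\Hom_R(S,R) \cong (R :_{\br(R)} S)$ and similarly $\Hom_R(S, \omega_R) \cong \omega_R :_{\br(R)} S$. The key observation connecting these to $I$ is that, since $S$ is birational over $R$ and reflexive, the conductor $I$ is itself an $S$-module (it is the largest common ideal of $R$ and $S$), so $IS = I$; thus $I$ being $I$-Ulrich, i.e. $I \cong I^2$, is equivalent to $\End_R(I) \cong S$ — or more precisely $I$ being a stable ideal whose "multiplier ring" is $S$.

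The first step is to show $(1) \Rightarrow (2)$. If $S$ is Gorenstein, then $\omega_S \cong S$ as $S$-modules. Combined with $\omega_S \cong \Hom_R(S, \omega_R)$, this gives $\Hom_R(S,\omega_R) \cong S$, and then \Cref{t1} (applied in the birational case, whose equivalences include $\omega_R S \cong S$ and $S$ being $\omega_R$-Ulrich) tells us $\omega_R S \cong S$, i.e. $I\omega_R \cong I$ after we intersect appropriately — here I would use that multiplying the isomorphism $\omega_R S \cong S$ by the conductor $I$ (an $S$-ideal) yields $I \omega_R \cong I\omega_R S \cong IS = I$. It remains to extract $I \cong I^2$. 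For this I would use that $S$ Gorenstein forces $S = \End_R(I)$: indeed the reflexive birational extension $S$ acts on $I$, and the trace/conductor formalism of Sections 2–3 identifies $\End_R(I)$ with a birational extension contained in $S$; Gorensteinness (self-duality of $\omega_S \cong S$) should pin this down to equality, whence $I$ is stable, i.e. $I^2 = xI$ for a principal reduction $x$, and stability plus $\End_R(I) = S$ gives $I \cong I^2$.

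For the converse $(2) \Rightarrow (1)$: assume $I \cong I^2 \cong I\omega_R$. From $I \cong I\omega_R$ and the fact that $I$ is a faithful $S$-module (it contains a regular element and $IS = I$), we get $\omega_R \otimes_R$ something collapses, and more concretely $\Hom_R(S,\omega_R) \cong \Hom_R(S,R) \cong S$: the first isomorphism is condition (3) of \Cref{t1}, which follows from $I \cong I\omega_R$ by the argument that $I\omega_R \cong I$ means $S$ is $\omega_R$-Ulrich (condition (5) of \Cref{t1}, via $\omega_R S \cong S$), and the second isomorphism $\Hom_R(S,R)\cong S$ is where $I \cong I^2$ enters: $I^2 \cong I$ says $I$ is stable with $\End_R(I) \cong S$, and a stable ideal satisfies $\Hom_R(S,R) \cong \Hom_R(\End_R(I), R) \cong S$ because $I$ being stable makes $I$ a "canonical-like" self-dual object for the extension $\End_R(I)$. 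Then $\omega_S \cong \Hom_R(S,\omega_R) \cong S$, so $S$ is Gorenstein.

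The main obstacle I anticipate is the precise bookkeeping in the step "$I \cong I^2 \iff \End_R(I) = S$ and $I$ is stable," and in verifying that under hypothesis (2) the extension $\End_R(I)$ is genuinely all of $S$ rather than some intermediate ring — this is where reflexivity of $S$ over $R$ and the identification $I = \ann_R(S/R)$ must be used carefully, likely via the formula $\End_R(I) = (I :_{\br(R)} I)$ together with $IS = I$ giving $S \subseteq \End_R(I)$, and the reverse inclusion coming from $I^2 \cong I$ forcing $\End_R(I)$ to contain no new elements beyond $S$. The other delicate point is ensuring all the isomorphisms are genuinely $S$-module (not merely $R$-module) isomorphisms where needed, since Gorensteinness of $S$ is an $S$-module statement about $\omega_S$; the canonical module formalism for birational extensions handles this, but it needs to be invoked cleanly.
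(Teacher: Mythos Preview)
Your overall strategy matches the paper's: link Gorensteinness of $S$ to the $\omega_R$-Ulrich condition via \Cref{totallyreflexive}, and handle the $I$-Ulrich condition separately. The $\omega_R$-Ulrich bookkeeping is essentially fine (for $(1)\Rightarrow(2)$ one can simply invoke condition~(2) of \Cref{totallyreflexive}, since $\omega_S\cong S\in\rf(R)$ by hypothesis).

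The genuine gap is in the $I$-Ulrich part. In $(1)\Rightarrow(2)$ you claim that $\End_R(I)$ is a birational extension \emph{contained in} $S$, that Gorensteinness pins down equality, and that from $\End_R(I)=S$ stability of $I$ follows. All three steps are off. First, since $I$ is an $S$-ideal one has $S\subseteq I:I=\End_R(I)$, not the reverse. Second, the equality $\End_R(I)=S$ is automatic from the bijection of \Cref{goto2020} (because $S$ is reflexive and $I=\cond_R(S)$) and has nothing to do with $S$ being Gorenstein. Third, and most importantly, $\End_R(I)=S$ does \emph{not} imply that $I$ is stable: take $I=\m$ and $S=\End_R(\m)$ over any $R$ without minimal multiplicity. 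The ingredient you are missing is that $I$ is a \emph{trace ideal} (again \Cref{goto2020}), so that $I^*=R:I=I:I=S$. In the forward direction the paper combines this with the Gorenstein hypothesis: since $\End_S(I)=S$ and $S$ is Gorenstein, one gets $I\cong S$ as $S$-modules (this is the real input, via \cite[Lemma~2.9]{kobayashi2017syzygies}); writing $I=aS$ then gives $I^2=aI$ directly. In $(2)\Rightarrow(1)$, stability $I^2=aI$ together with the trace property yields $I\cong I^*$ by \Cref{xI}, and then $I^*\cong S$ gives $I\cong S$; this is what makes your hand-waved step ``$\Hom_R(S,R)\cong S$'' actually work, and it also transfers $\omega_R$-Ulrichness from $I$ to $S$ without further argument.
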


We end the paper with a number of examples and open questions.

\section{Preliminaries I: general Noetherian rings}
Throughout this article, assume that all rings are commutative with unity and are Noetherian, and that all modules are finitely generated. 

Let $R$ denote a Noetherian ring with total ring of fractions $Q(R)$. Let $\overline{R}$ denote the integral closure of $R$ in $Q(R)$. Let $\spec{R}$ denote the set of prime ideals of $R$.  For any $R$-module $M$, if the natural map $\ds M \to M \otimes_R Q(R)$ is injective, then $M$ is called \textit{torsion-free}. It is called a torsion module if $\ds M \otimes_R Q(R) = 0$.
The \textit{dual} of $M$, denoted $M^*$, is the module $\Hom_R ( M, R )$; the \textit{bidual} then is $M^{**}$. The bilinear map $$\ds M\times  M^* \to R,\qquad \ds ( x, f )\mapsto f(x),$$ induces
a natural homomorphism $\ds h : M \to M^{**}$. We say that $M$ is \textit{torsionless} if $h$ is injective, and $\ds M$ is \textit{reflexive} if $h$ is bijective. 

For  $R$-submodules $M,N$ of $Q(R)$, we denote 
\begin{align*}
    M:_RN &=\{a\in R~|~aN\subseteq M\}\\
    M:N &=\{a\in Q(R)~|~aN\subseteq M\}.
\end{align*}

We will need the notion of trace ideals. We first recall the definition. 

\begin{dfn}\label{deftrace}
	The \textit{trace ideal} of an $R$-module $M$, denoted $\text{tr}_R(M)$ or simply $\tr(M)$ when the underlying ring is clear, is the image of the map $\tau_M:M^*\otimes_R M\rightarrow R$ defined by $\tau_M(\phi\otimes x)=\phi(x)$ for all $\phi\in M^*$ and $x\in M$.
\end{dfn}	

Say that an ideal $I$ is a trace ideal if $\ds I=\tr(M)$ for some module $M$. Since $\ds \tr(\tr(M))=\tr(M)$, $I$ is a trace ideal if and only if $I=\tr(I)$. It is clear from the definition, that if $M\cong N$, then $\tr(M)=\tr(N)$.

There are various expositions on trace ideals scattered through the literature, see for example \cite{herzog2019trace},\cite{haydee_trace_end},\cite{kobayashi2019rings},\cite{goto2020correspondence},\cite{faber2019trace}, etc. For the purposes of this paper, we shall mainly need the following properties of trace ideals.

\begin{prop}\cite[Proposition 2.4]{kobayashi2019rings}\label{traceproperties}
Let $M$ be an $R$-submodule of $Q(R)$ containing a nonzero divisor of $R$. Then the following statements hold.
\begin{enumerate}
\item $\tr(M)=(R: M)M$.
\item The equality $M = \tr(M)$ holds if and only if  $M : M = R : M$ in $Q(R)$.
\end{enumerate}
\end{prop}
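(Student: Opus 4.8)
The plan is to establish part (1) first---this is where the real content lies---and then deduce part (2) by a short, purely formal manipulation of colon ideals. For part (1), the main step is to identify the dual $M^* = \Hom_R(M,R)$ with the fractional ideal $R:M$ sitting inside $Q(R)$. Here is how I would do it: since $M$ contains a nonzerodivisor $a$ of $R$, we have $aR \subseteq M \subseteq Q(R)$, and localizing at the multiplicative set of nonzerodivisors yields $M \otimes_R Q(R) \cong Q(R)$ (because $a$ becomes a unit). Consequently any $\phi \in M^*$ extends to a $Q(R)$-linear endomorphism of $Q(R)$, hence is multiplication by a (uniquely determined, namely $\phi(a)a^{-1}$) element $q_\phi \in Q(R)$; the condition $\phi(M)\subseteq R$ then reads $q_\phi M \subseteq R$, i.e. $q_\phi \in R:M$. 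Conversely every $q \in R:M$ gives the homomorphism $x \mapsto qx$. This produces an isomorphism $M^* \cong R:M$ under which $\phi(x) = q_\phi x$, so the image of $\tau_M \colon M^*\otimes_R M \to R$---which is spanned by the elements $\phi(x) = q_\phi x$---is precisely the product $(R:M)M$. That gives $\tr(M) = (R:M)M$.

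For part (2) I would simply rewrite, via part (1), the condition $M = \tr(M)$ as $(R:M)M = M$ and chase inclusions. For the forward direction: assuming $(R:M)M = M$, the fact that $\tr(M)=(R:M)M$ is an ideal of $R$ forces $M \subseteq R$; then for $q \in R:M$ we get $qM \subseteq (R:M)M = M$, so $R:M \subseteq M:M$, while for $q \in M:M$ we get $qM \subseteq M \subseteq R$, so $M:M \subseteq R:M$, hence $M:M = R:M$. For the converse: if $M:M = R:M$, then since $R \subseteq M:M$ we have $1 \in R:M$, so $M = 1\cdot M \subseteq (R:M)M = \tr(M)$; and conversely $\tr(M) = (R:M)M = (M:M)M \subseteq M$ because $qM\subseteq M$ for each $q \in M:M$. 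Thus $\tr(M) = M$.

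The only genuinely delicate point is the identification $M^* \cong R:M$ in part (1): one must use the nonzerodivisor hypothesis to extend homomorphisms $M\to R$ to $Q(R)$-linear maps on $Q(R)$, and then verify that under this dictionary the trace pairing corresponds to multiplication of fractional ideals. Once that is in place, part (2) is entirely formal, and no further input (Noetherian hypotheses, finite generation, etc.) is needed beyond the presence of a nonzerodivisor in $M$.
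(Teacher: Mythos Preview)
The paper does not supply its own proof of this proposition: it is quoted verbatim from \cite[Proposition 2.4]{kobayashi2019rings} and used as a black box (indeed, the identification $\Hom_R(M,R)\cong R:M$ that you prove is exactly what the paper records separately as \Cref{convention_remark}). So there is nothing to compare against here.

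That said, your argument is correct and complete. Your derivation of $M^*\cong R:M$ from the nonzerodivisor hypothesis is the standard one, and the deduction $\tr(M)=(R:M)M$ follows immediately. Your treatment of part~(2) is also fine: in the forward direction the key observation that $M=\tr(M)\subseteq R$ is what makes $M:M\subseteq R:M$ available, and in the converse direction you correctly use $1\in M:M=R:M$ to get both inclusions. One small remark: implicit in all of this is that $R:M$ is nonempty (indeed contains a nonzerodivisor), which follows from the paper's standing assumption that modules are finitely generated---clearing denominators of a generating set for $M$ produces such an element. You might make that explicit, but otherwise the proof stands.
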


Recall that a finitely generated $R$-submodule $I$ of $Q(R)$ is called a {\it fractional ideal} and it is {\it regular} if it is isomorphic to an $R$-ideal of grade one.
\begin{rem}\label{convention_remark}
	Let $R$ be any ring with total ring of fractions $Q(R)$. For any two regular fractional ideals $I_1,I_2$, we have $\ds I_1:I_2\cong \Hom_R(I_2,I_1)$ where the isomorphism is as $R$-modules, see for example \cite[Lemma 2.1]{herzog238kanonische}. 

	By abuse of notation, we will identify these two $R$-modules and use them interchangeably in the remainder of the paper.
\end{rem}

\begin{rem}\label{convention_remark_2}
	By \Cref{convention_remark}, we can identify $I^{*}:=\Hom_R(I,R)$ with $R:I$ whenever $I$ contains a non zero divisor. This is also denoted as $I^{-1}$. Moreover for any non zero divisor $x$ in $I$, we have $\ds xI^*=x:_R I$. Hence $I^*\cong x:_R I$ as $R$-modules.
\end{rem}

Next, we discuss some general statements about reflexive modules that will be needed. Recall that an $R$-module $M$ is called {\it totally reflexive} if $M$ is reflexive and $\Ext^{i}_R(M,R)=\Ext^i_R(M^*,R)=0$ for all $i>0$ (see \cite{kustin2018totally} for instance for more details). In the result below we need to consider modules that are locally totally reflexive on the minimal primes of $R$. See \Cref{refrem}.

\begin{lem}\label{reflem}
Let $R$ be a Noetherian ring satisfying condition $(S_1)$. Consider modules $M,N$ that are locally totally reflexive on the minimal primes of $R$. 
\begin{enumerate}
\item Assume there is a short exact sequence $\ses{M}{N}{C}$. If $N$ is reflexive and $C$ is torsionless then $M$ is reflexive.
\item If $\Hom_R(M,N)$ is locally totally reflexive on the minimal primes of $R$ and $N$ is reflexive, then $\Hom_R(M,N)$ is reflexive. 
\end{enumerate} 
\end{lem}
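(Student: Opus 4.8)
The strategy is to reduce everything to the statement that, over an $(S_1)$ ring, a torsionless module that is reflexive in codimension zero (i.e.\ locally reflexive on the minimal primes) is in fact reflexive, together with a snake-lemma chase. First I would establish the key auxiliary fact: if $R$ is $(S_1)$ and $M$ is torsionless with $M_\p$ reflexive for every $\p \in \Min(R)$, then the natural map $h_M\colon M \to M^{**}$ is bijective. Injectivity is the definition of torsionless; for surjectivity, look at $C := \cok(h_M)$. Since $M^{**}$ is a second dual over an $(S_1)$ ring it satisfies $(S_1)$ hence is torsion-free (it embeds in a finite free module after localizing at the total ring of fractions, using that $R$ has no embedded primes and every associated prime is minimal), and $h_M$ is an isomorphism after localizing at each minimal prime by hypothesis. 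Therefore $C$ vanishes at every minimal prime, so $C$ is a torsion module; but $C$ is a quotient of $M^{**}$, which has no embedded associated primes, so any nonzero submodule of $M^{**}$ meets a minimal prime nontrivially --- more carefully, one argues that $\im h_M$ agrees with $M^{**}$ in codimension zero and $M^{**}/\im h_M$ would have an associated prime which must be minimal, contradiction. Hence $C = 0$.

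For part (1), apply $(-)^* = \Hom_R(-,R)$ to the short exact sequence $\ses{M}{N}{C}$. Since $C$ is torsionless it embeds in a free module, so $\Ext^1_R(C,R)$ need not vanish globally, but here is where local total reflexivity on $\Min(R)$ enters: localizing at a minimal prime $\p$, the modules $M_\p, N_\p$ are totally reflexive over the (zero-dimensional) local ring $R_\p$, and $C_\p$ is torsionless, so $\Ext^1_{R_\p}(C_\p,R_\p)$ — wait, we want the biduals to behave well. Concretely, form the commutative diagram with rows $\ses{M}{N}{C}$ and $M^{**}\to N^{**}\to C^{**}$ (the bottom row need not be exact, but $\Hom(-,R)$ applied twice gives a complex), with vertical maps the biduality morphisms $h_M, h_N, h_C$. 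By hypothesis $h_N$ is an isomorphism; $h_M$ and $h_C$ are injective since $M,C$ are torsionless. A diagram chase then shows $h_M$ is surjective: given $\psi \in M^{**}$, push it into $N^{**}=N$, pull back to an element of $N$ whose image in $C$ maps (under $h_C$) to the image of $\psi$ in $C^{**}$; the injectivity of $h_C$ forces that image into $C$ genuinely, so after correcting we land in the image of $M$. The only subtle point — that all the relevant ${}^{**}$ operations commute with localization at minimal primes and that exactness is recovered there — is exactly what the "locally totally reflexive on $\Min(R)$" hypothesis buys us, ensuring $C_\p^{**} \cong C_\p$ and the localized sequences stay exact on applying $\Hom_{R_\p}(-,R_\p)$ (total reflexivity kills the relevant $\Ext^1$).

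For part (2), set $L := \Hom_R(M,N)$. Since $N$ is reflexive, $N = N^{**} = \Hom_R(N^*,R)$, so $L = \Hom_R(M, \Hom_R(N^*,R)) \cong \Hom_R(M\otimes_R N^*, R)$ by Hom-tensor adjunction; thus $L$ is an $R$-dual module, hence automatically torsionless, and we are given it is locally totally reflexive on $\Min(R)$. Now apply the auxiliary fact from the first paragraph: a torsionless module over an $(S_1)$ ring which is reflexive at every minimal prime is reflexive, and $L_\p = \Hom_{R_\p}(M_\p, N_\p)$ is reflexive over $R_\p$ because $N_\p$ is reflexive (indeed totally reflexive) and $\Hom$ of a module into a reflexive module over a $0$-dimensional Gorenstein-type local ring is reflexive — more directly, $L_\p$ is totally reflexive by hypothesis. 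Hence $h_L$ is bijective.

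The main obstacle I anticipate is part (1): one must be careful that applying $\Hom_R(-,R)$ to $\ses{M}{N}{C}$ does not automatically give an exact sequence of biduals globally (the relevant $\Ext^1_R(C,R)$ can be nonzero), so the surjectivity of $h_M$ genuinely relies on the local total reflexivity hypothesis to control things at the associated (= minimal) primes, combined with the $(S_1)$ condition to globalize a statement about torsion submodules. Getting this bookkeeping right — precisely which $\Ext$'s vanish where, and that "torsion quotient of an $(S_1)$ module supported only on $\Min$ is zero" — is the crux; the rest is formal adjunction and diagram chasing.
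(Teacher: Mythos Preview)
Your auxiliary fact --- ``torsionless plus reflexive at every minimal prime, over an $(S_1)$ ring, implies reflexive'' --- is false, and your proof of it contains the error. You argue that the cokernel $C=\cok(h_M)$ ``would have an associated prime which must be minimal,'' but a quotient of a torsion-free module can have \emph{any} prime as an associated prime; there is no reason $\ass(M^{**}/\im h_M)\subseteq\Min(R)$. Concretely, take $R=k[[t^3,t^4,t^5]]$ (a one-dimensional domain, so $(S_1)$ and $\Min(R)=\{(0)\}$) and $M=\omega_R$. Then $\omega_R$ is an ideal, hence torsionless, and $(\omega_R)_{(0)}$ is a vector space over the field $R_{(0)}$, hence reflexive; but $\omega_R$ is not reflexive (it has $\mu(\omega_R)=\type(R)=2$, while the only indecomposable reflexive modules over this $R$ are $R$ and $\m$). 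Since your argument for part~(2) rests entirely on this auxiliary fact, part~(2) has a genuine gap.

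What does work, and what the paper uses, is a splitting argument specific to \emph{dual} modules: for any $J$, the biduality map $J^*\to J^{***}$ is split injective (its left inverse is the dual of $J\to J^{**}$), so $J^{***}=J^*\oplus M_1$; if $(J^*)_P$ is reflexive at each $P\in\Min(R)$ then $(M_1)_P=0$ there, and since $M_1$ sits inside a torsionless module over an $(S_1)$ ring, $M_1=0$. Ironically, your Hom-tensor adjunction already exhibits $\Hom_R(M,N)\cong (M\otimes_R N^*)^*$ as a dual, and the hypothesis of (2) gives local reflexivity at minimal primes --- so had you invoked the splitting argument instead of the false auxiliary fact, you would have a correct and arguably cleaner proof than the paper's (which first proves the case $N=R$ by this splitting, then reduces general $N$ to it via part~(1) and an auxiliary short exact sequence). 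Your treatment of part~(1) is essentially sound and close in spirit to the paper's; the one point you leave implicit is that $M^{**}\to N^{**}$ is injective, which follows because its kernel is a submodule of the torsionless module $M^{**}$ that vanishes at every minimal prime (using $\Ext^1_{R_P}(C_P,R_P)=0$, which in turn needs $C_P$ totally reflexive --- a fact the paper extracts from \cite{masek1998gorenstein}).
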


\begin{proof}
For $(1)$, \cite[Theorem 29]{masek1998gorenstein} implies that $C$ is locally totally reflexive on the minimal primes. Thus $\Ext^1_{R_P}(C_P,R_P)=0$ for all minimal primes $P$ and the conclusion follows from  \cite[Proposition 8]{masek1998gorenstein}.

For $(2)$, we first prove the case $N=R$. Let $f: M^* \to M^{***}$ be the natural map. Let $g: M^{***}\to M^*$ be the dual of the natural map $M\to M^{**}$. Then $g\circ f=\id$, so $f$ splits, and we get $M^{***}=M^*\oplus M_1$. But for any minimal prime $P$, $(M_1)_P=0$, so $M_1$ has positive grade. As $M^{***}$ embeds in a free module and $R$ is $(S_1)$, $M_1=0$.

Now, start with a short exact sequence $\ses{C}{F}{N^*}$ where $F$ is free. Dualizing, we get $\ses{N}{F^*}{D}$ where $D$ is torsionless (by \cite[Proposition 8]{masek1998gorenstein},  a submodule of a torsionless module is torsionless). Take $\Hom_R(M,-)$ to get the exact sequence $\ses{\Hom_R(M,N)}{\Hom_R(M,F^*)}{K'}$ where $K'$ is a sub-module of $\Hom_R(M,D)$. We can apply part $(1)$ and the previous paragraph to get that $\Hom_R(M,N)$ is reflexive provided we show that $K'$ is torsionless.

Since, $D$ is torsionless, $D$ embeds into $D^{**}$ and hence into a free module say $G$. Thus, $\Hom_R(M,D)$ embeds into $\Hom_R(M,G)$. Finally, note that $M^*$ is torsionless as it is a submodule of a free module and thus, by applying  \cite[Proposition 8]{masek1998gorenstein} twice, we get that $K'$ is torsionless. 
\end{proof}

\begin{rem}\label{refrem}
 We stated \Cref{reflem} in quite general setting. One reason is in practice, as well as in this paper, it is often applied in the following two different situations: when $R$ is generically Gorenstein, or when $M,N$ are locally free on the minimal primes (for instance when they are birational extensions or regular ideals of $R$). In such situations the totally reflexive assumptions are automatically satisfied. 
\end{rem}

The relationship between reflexive modules and birational extensions is also naturally of interest.
We  say that an extension $f: R \to S$ is {\it birational} if $S\subset Q(R)$. Equivalently $Q(R)=Q(S)$. 

\begin{prop}\cite[Proposition 2.4]{kobayashi2019rings}\label{fractionalreflexive}
Let $M\subseteq Q(R)$ be a regular fractional ideal of $R$. Then $M$ is a reflexive $R$-module if and only if there is an equality $M=R:(R:M)$ in $Q(R)$.
\end{prop}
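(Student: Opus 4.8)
The plan is to reduce everything to the identifications in \Cref{convention_remark} and \Cref{convention_remark_2}, which realize the $R$-dual of a regular fractional ideal as a colon ideal inside $Q(R)$, and then to observe that, under those identifications, the canonical biduality map $h\colon M\to M^{**}$ becomes the inclusion $M\hookrightarrow R:(R:M)$. Granting this, the proposition is immediate: $h$ is always injective (it is an inclusion), and it is bijective precisely when $M=R:(R:M)$.

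First I would make the hypothesis of \Cref{convention_remark_2} available twice. Being a regular fractional ideal, $M$ contains a nonzerodivisor $x$; clearing denominators of the generators of $M$ gives a nonzerodivisor $a\in R$ with $aM\subseteq R$, so $a\in R:M$, while $x(R:M)=x:_R M$ is an ideal of the Noetherian ring $R$, so $R:M\cong x(R:M)$ is a fractional ideal containing the nonzerodivisor $a$. Thus \Cref{convention_remark_2} applies to both $M$ and $R:M$ and gives $R$-module isomorphisms $M^*\cong R:M$ and $M^{**}=(M^*)^*\cong(R:M)^*\cong R:(R:M)$. Also, the inclusion $M\subseteq R:(R:M)$ holds trivially, since $qm\in R$ for every $m\in M$ and $q\in R:M$.

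The step with genuine content is to check that $h$ is matched with this inclusion. The isomorphism $M^*\xrightarrow{\ \sim\ }R:M$ of \Cref{convention_remark_2} sends a functional $\phi$ to the unique $q_\phi\in Q(R)$ with $\phi(m')=q_\phi m'$ for all $m'\in M$ (namely $q_\phi=\phi(x)/x$ for any nonzerodivisor $x\in M$). Tracing $m\in M$ through $h$: the evaluation functional $\mathrm{ev}_m\in M^{**}$, $\phi\mapsto\phi(m)$, is carried to the functional $q\mapsto qm$ on $R:M$, that is, to multiplication by $m$, and under $(R:M)^*\cong R:(R:M)$ multiplication by $m$ corresponds to the element $m$ (which does lie in $R:(R:M)$ by the previous paragraph). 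Hence, after the identifications, $h$ is precisely the inclusion $M\hookrightarrow R:(R:M)$, and the proposition follows.

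I expect the only point needing care to be this last paragraph: one has to confirm that the $R$-linear isomorphism furnished by \Cref{convention_remark} is literally the ``multiply and clear denominators'' map, so that it is compatible with the evaluation map defining $h$. Everything else --- that $R:M$ is a fractional ideal containing a nonzerodivisor, and that $M\subseteq R:(R:M)$ --- is routine.
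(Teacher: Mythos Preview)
Your argument is correct: identifying $M^*$ with $R:M$ and $M^{**}$ with $R:(R:M)$ via \Cref{convention_remark} and \Cref{convention_remark_2}, and then checking that the canonical biduality map $h$ becomes the inclusion $M\hookrightarrow R:(R:M)$, is exactly the standard way to prove this statement, and you have carried out the verification carefully (including the compatibility of the colon-ideal isomorphisms with evaluation).

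Note, however, that the paper does not supply its own proof of this proposition; it is simply quoted from \cite[Proposition 2.4]{kobayashi2019rings}. So there is nothing to compare against here---your write-up is a self-contained proof of a result the authors chose to cite rather than reprove.
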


The following lemma was stated in more generality in a recent work of S. Goto, R. Isobe, and S. Kumashiro. 
\begin{lem}\label{goto2020}\cite[Lemma 2.6(1), Proposition 2.9]{goto2020correspondence}.
Let $R$ satisfy $(S_1)$ and $f:R\to S$ be a finite birational extension. Then the conductor of $S$ to $R$, denoted $\ds \mathfrak{c}_R(S):=R: S$, is a reflexive regular trace ideal of $R$. Thus, we get a bijective correspondence between reflexive regular trace ideals of $R$ and reflexive birational extensions of $R$ via the map $\alpha: I\mapsto \ds \End_R(I)$ and its inverse $\beta: S \mapsto \cond_R(S)$.
\end{lem}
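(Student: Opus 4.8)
The plan is to establish by hand the three asserted properties of the conductor $\mathfrak{c}_R(S)=R:S$, and then to assemble the bijection from these together with \Cref{traceproperties} and \Cref{fractionalreflexive}. Throughout, the standing hypothesis $(S_1)$ is used, as in the cited results, to keep the $\Hom$ and double-dual operations (and the identifications of \Cref{convention_remark} and \Cref{convention_remark_2}) torsion-free.

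First I would analyze $\mathfrak{c}_R(S)$ directly. Since $R\subseteq S$ we have $\mathfrak{c}_R(S)\subseteq R$; and writing the finite birational extension as $S=\sum_i R\,a_i/b_i$ with each $b_i$ a nonzerodivisor, the element $b:=\prod_i b_i$ is a nonzerodivisor contained in $\mathfrak{c}_R(S)$, so $\mathfrak{c}_R(S)$ is a regular ideal. Moreover $\mathfrak{c}_R(S)$ is even an ideal of $S$: if $aS\subseteq R$ and $s\in S$, then $(as)S=a(sS)\subseteq aS\subseteq R$. Hence $\mathfrak{c}_R(S)\cdot S=\mathfrak{c}_R(S)$, so \Cref{traceproperties}(1) applied to the fractional ideal $S$ gives $\tr_R(S)=(R:S)\,S=\mathfrak{c}_R(S)\,S=\mathfrak{c}_R(S)$; in particular $\mathfrak{c}_R(S)$ is a trace ideal. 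For reflexivity I would invoke the elementary closure identity: for a regular fractional ideal $M$, writing $M^{\vee}:=R:M$, one has $M\subseteq M^{\vee\vee}$ and $(-)^{\vee}$ reverses inclusions, so combining $(M^{\vee\vee})^{\vee}\subseteq M^{\vee}$ with $M^{\vee}\subseteq (M^{\vee})^{\vee\vee}$ forces $M^{\vee}=M^{\vee\vee\vee}$. Thus $\mathfrak{c}_R(S)=S^{\vee}$ satisfies $\mathfrak{c}_R(S)=R:(R:\mathfrak{c}_R(S))$ and is reflexive by \Cref{fractionalreflexive}.

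For the correspondence, let $I$ be a reflexive regular trace ideal. Then $\End_R(I)=I:I$ is a subring of $Q(R)$ containing $R$, and it is module-finite over $R$ since $I:I\subseteq x^{-1}I$ for any nonzerodivisor $x\in I$; so $\End_R(I)$ is a finite birational extension. Because $I$ is a trace ideal, \Cref{traceproperties}(2) identifies $I:I$ with $R:I=I^{\vee}$, which is reflexive by the identity above, so $\alpha(I)=\End_R(I)$ is a reflexive birational extension; conversely $\beta(S)=\mathfrak{c}_R(S)$ is a reflexive regular trace ideal by the previous paragraph. Finally $\beta(\alpha(I))=R:(I:I)=R:(R:I)=I$ since $I$ is reflexive (\Cref{fractionalreflexive}), and, as $\mathfrak{c}_R(S)=R:S$ is a trace ideal, $\alpha(\beta(S))=\End_R(R:S)=(R:S):(R:S)=R:(R:S)=S$, the last equality because $S$, a regular fractional ideal reflexive as an $R$-module, satisfies $S=R:(R:S)$ by \Cref{fractionalreflexive}. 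Hence $\alpha$ and $\beta$ are mutually inverse.

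No step here is genuinely deep: the substance is the closure identity $M^{\vee}=M^{\vee\vee\vee}$ together with the two quoted propositions. The points requiring care --- and where $(S_1)$ is silently at work --- are the bookkeeping that every module in sight ($S$, $\mathfrak{c}_R(S)$, $I$, $R:I$, $I:I$) is a regular fractional ideal, so that \Cref{convention_remark_2}, \Cref{traceproperties}, and \Cref{fractionalreflexive} genuinely apply, and the verification that $\End_R(I)$ is actually module-finite over $R$ rather than merely a subring of $Q(R)$.
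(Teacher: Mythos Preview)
Your argument is correct and follows essentially the same route as the paper: compute $\tr(S)=(R:S)S=\mathfrak{c}_R(S)$, invoke \Cref{traceproperties}(2) to identify $\End_R(I)=I:I=R:I$, and close up the bijection with \Cref{fractionalreflexive}. The one genuine difference is how reflexivity is obtained: the paper appeals to \Cref{reflem}(2) (Hom into a reflexive module is reflexive) for both $\mathfrak{c}_R(S)\cong S^*$ and $\End_R(I)$, whereas you use the elementary triple-dual identity $M^{\vee}=M^{\vee\vee\vee}$ for regular fractional ideals directly. Your approach is more self-contained and sidesteps the locally-totally-reflexive bookkeeping needed for \Cref{reflem}; the paper's approach has the advantage of applying uniformly to modules, not just fractional ideals. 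You are also more explicit than the paper on two points it leaves implicit: exhibiting a nonzerodivisor in $\mathfrak{c}_R(S)$ and checking that $\End_R(I)$ is module-finite.
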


\begin{proof}
Let $S$ be a reflexive birational extension. Then $ \cond_R(S)\cong S^*$ is reflexive by  \Cref{reflem}. Next, note that $\ds \tr(S)=S^*S=\mathfrak{c}_R(S)S=\mathfrak{c}_R(S)$. So $\cond_R(S)$ is a reflexive trace ideal. If $I$ is a regular reflexive trace ideal, then \Cref{traceproperties}(2) and \Cref{reflem}(2) tell us that $\ds \End_R(I)$ is indeed reflexive.

Finally, we have $\beta(\alpha(I)) = I$ and $\alpha(\beta(S)) = S$ by \Cref{traceproperties},   \Cref{fractionalreflexive} and the above paragraph. 
\end{proof}

So these birational extensions provide important sources for generating reflexive ideals. We have the following criteria for a reflexive module over $R$ to be a module over a finite birational extension $S$. This was stated in \cite[Theorem 3.5]{faber2019trace} for reduced one dimensional local rings, but the result holds for more general rings, and we restate it here with a self-contained proof: 
\begin{thm}\label{faber}
Let $R$ be a Noetherian ring and $M$ be a finite $R$-module. Let $S$ be a finite birational extension of $R$. Consider the following statements.
\begin{enumerate}
    \item $M$ is a module over $S$.
    \item $\tr(M)\subseteq \cond_R(S)$ where $\cond_R(S)=R:S$.
\end{enumerate}
Then $(1)$ implies $(2)$. The converse is true if $M$ is a reflexive $R$-module. 

\end{thm}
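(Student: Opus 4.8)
The plan is to treat the two implications separately. For $(1)\Rightarrow(2)$: if $M$ is an $S$-module, then every $R$-linear map $\phi\colon M\to R$ extends to an $S$-linear map $M\to S$ (since $S\subseteq Q(R)$, scalar multiplication by elements of $S$ is defined on any $R$-submodule of $Q(R)$, and more conceptually $\Hom_R(M,R)\hookrightarrow \Hom_R(M,Q(R))=\Hom_S(M,Q(R))$). Concretely, for $\phi\in M^*$ and $s\in S$ one has $s\,\phi(m)=\phi(sm)\in\phi(M)\subseteq R$ for all $m\in M$, so $\phi(M)\subseteq R:S=\cond_R(S)$. Summing over all $\phi$ gives $\tr(M)=\sum_{\phi\in M^*}\phi(M)\subseteq\cond_R(S)$, which is $(2)$.

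For the converse, assume $M$ is reflexive and $\tr(M)\subseteq\cond_R(S)$; I want to produce an $S$-module structure on $M$ compatible with its $R$-structure, and it suffices to show $SM\subseteq M$ inside some ambient module, or more precisely to build the multiplication map $S\otimes_R M\to M$. The key idea is to use reflexivity to identify $M$ with $\Hom_R(M^*,R)=M^{**}$ and to let $S$ act through its action on $M^*$. First observe that $M^*$ is naturally an $S$-module whenever $\tr(M)\subseteq\cond_R(S)$: for $\phi\in M^*$ and $s\in S$, the assignment $m\mapsto s\,\phi(m)$ lands in $R$ because $s\,\phi(m)\in s\cdot\tr(M)\subseteq s\cdot\cond_R(S)\subseteq R$ (using $\cond_R(S)=R:S$), and this is clearly $R$-linear in $m$; so $s\phi\in M^*$, giving $M^*$ an $S$-module structure extending its $R$-structure. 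Consequently $M^{**}=\Hom_R(M^*,R)$ is also an $S$-module, via $(s\cdot\theta)(\phi)=\theta(s\phi)$ for $\theta\in M^{**}$. Now the natural map $h\colon M\to M^{**}$ is an isomorphism of $R$-modules by reflexivity, so I transport the $S$-structure on $M^{**}$ back to $M$ along $h$; since $h$ is $R$-linear this new $S$-action restricts to the original $R$-action on $M$, proving $(1)$.

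The step I expect to require the most care is verifying that the $S$-module structure transported to $M$ via $h$ is genuinely compatible — i.e.\ that the resulting scalar multiplication $S\times M\to M$ really does extend the $R$-structure and satisfies the module axioms — and, if one wants the cleaner statement that $M\subseteq Q(R)$ is literally closed under multiplication by $S$, checking that in the case where $M$ is a fractional ideal the abstract action agrees with honest multiplication in $Q(R)$. For a regular fractional ideal $M$ this is transparent: by \Cref{fractionalreflexive}, $M=R:(R:M)$, and $R:M=M^*$ (\Cref{convention_remark_2}), so $M=R:M^*$; the condition $\tr(M)=(R:M)M\subseteq\cond_R(S)=R:S$ (using \Cref{traceproperties}(1)) gives $S\cdot(R:M)\cdot M\subseteq R$, hence $S\cdot M\subseteq R:(R:M)=M$, so $M$ is an $S$-module directly. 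The general case reduces to this spirit after replacing $M$ by its image in $M\otimes_R Q(R)$, or one simply runs the abstract $M^{**}$ argument above; either way the obstacle is purely bookkeeping of the module axioms rather than anything deep.
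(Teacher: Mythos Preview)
Your proof is correct and, for the converse direction, essentially identical to the paper's: both show that $\tr(M)\subseteq\cond_R(S)$ forces $M^*$ to carry an $S$-structure, then pass to $M^{**}\cong M$. The only real difference is in $(1)\Rightarrow(2)$: the paper takes a one-line route via an $S$-linear surjection $S^n\twoheadrightarrow M$, giving $\tr(M)\subseteq\tr(S)=\cond_R(S)$, whereas you argue elementwise using the identification $\Hom_R(M,Q(R))=\Hom_S(M,Q(R))$. Your route is fine (the identification follows from $M\otimes_R Q(R)=M\otimes_S Q(S)$ and tensor--Hom adjunction, since $S$ is birational), but the paper's surjection trick avoids that verification entirely and does not use birationality beyond the equality $\tr(S)=\cond_R(S)$.
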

\begin{proof}
Let $M$ be a module over $S$. Then there exists a $S$-linear (hence $R$-linear) surjection $S^n\rightarrow M$, so $\tr(M)\subseteq \tr(S)=\cond_R(S)$.

Conversely assume $\tr(M)\subseteq \cond_R(S)$ and $M$ reflexive. Consider $f\in M^*$ and $s\in S$, we have $s\cdot f\in M^*$ by assumption. Therefore $M^*$ is an $S$-module. From the forward implication, $\tr(M^*)\subseteq \cond_R(S)$. So repeating the argument again, we get that $M^{**}$ is an $S$-module. Since $M$ is reflexive, we are done.  
\end{proof}

\section{Preliminary II: dimension one}
Throughout the rest of this paper (unless otherwise specified), $(R,\m,k)$ will denote a Cohen-Macaulay local ring of dimension one with maximal ideal $\m$ and residue field $k$. Denote by $Q(R)$ the total quotient ring of $R$. For an $\m$-primary ideal $I$ and a module $M$, let $e_I(M)$ denote the Hilbert-Samuel multiplicity of $M$ with respect to $I$. In the case, when $I=\m$, we write $e(M)$. Let $\mathfrak{c}:=R: \overline{R}$ denote the conductor ideal of  $\overline{R}$ to $R$. For an $R$-module $M$, let $\ds \mu(M)$ and  $\ell(M)$,  denote the minimal number of generators of $M$ and the length of $M$ respectively, as an $R$-module. 

 Let $\cm(R)$ denote the category of maximal Cohen-Macaulay $R$-modules and let $\rf(R)$ denote the category of reflexive $R$-modules. 
%  Let $\rf_r(R)$ be the category of rank $r$ reflexive modules. 
 We say a category is of \textit{finite type} if it has only finitely many indecomposable objects up to isomorphism. 
 
 \begin{rem}\label{closetorem}
 Note that the following statements are true.
 
 \begin{enumerate}
     \item $\ds \{\text{free $R$- modules}\}\subset \rf(R)\subset \cm(R).$
     
     \item $R$ is regular if and only if $\ds \{\text{free $R$- modules}\}=\cm(R)$.
     
     \item $R$ is Gorenstein if and only if $\ds \rf(R)=\cm(R)$. 
 \end{enumerate}  
 
 We shall be interested in the behaviour of $\rf(R)$ in the case when $R$ is ``close to" being regular or Gorenstein. We will come back to this in \Cref{firstapps} and \Cref{furtherapps}.
 \end{rem}

The conductor and maximal ideals are natural examples of reflexive trace ideals. 
\begin{cor}\label{c&m}
Let $(R,\m)$ be a Noetherian local ring of arbitrary dimension satisfying $(S_1)$. If $\bar{R}$ is finite over $R$, $\cond$ is a regular reflexive trace ideal. If $\dim R=1$ and $R$ is not regular, $\m$ is a regular reflexive trace ideal.
\end{cor}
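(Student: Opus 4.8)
The plan is to treat the two assertions separately, each time exhibiting the ideal as a trace ideal of a suitable module and then invoking \Cref{reflem}(2) (together with \Cref{refrem}) to get reflexivity. For the conductor $\cond=R:\overline R$: since $\overline R$ is finite over $R$ and $R$ satisfies $(S_1)$, the extension $R\to\overline R$ is a finite birational extension (after checking that $\overline R\subseteq Q(R)$, which is immediate since the integral closure is taken inside $Q(R)$). Then $\cond=\cond_R(\overline R)$, and \Cref{goto2020} applies verbatim: the conductor of a finite birational extension is a reflexive regular trace ideal. So the first claim is essentially just an instance of \Cref{goto2020}, once one notes that $\overline R$ being module-finite over $R$ is exactly the hypothesis needed there, and that $\cond$ is regular because it contains a nonzerodivisor (any common denominator clearing $\overline R$ into $R$).

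For the second claim, assume $\dim R=1$ and $R$ is not regular. I would first observe that $\m$ is a regular ideal since $R$ is Cohen-Macaulay of dimension one, so $\m$ contains a nonzerodivisor. The key point is that $\m$ is a trace ideal, equivalently $\m=\tr(\m)$, equivalently (by \Cref{traceproperties}(2)) $\m:\m=R:\m$ in $Q(R)$. Now $\m:\m=\End_R(\m)$ is a ring between $R$ and $Q(R)$; if $\m:\m=R$ then by \Cref{traceproperties}(2) we would be done showing $\m$ is a trace ideal, but in fact one shows the stronger dichotomy: either $\m:\m=R$, or $\m:\m\supsetneq R$. If $\m:\m=R$, then $R:\m=R$ forces $\m$ to be... one must be careful: actually the standard fact is that $\m$ is \emph{always} a trace ideal when $R$ is not a discrete valuation ring, because $R:\m$ strictly contains $R$ (as $R$ is not regular, hence not a DVR in dimension one), and then $(R:\m)\m\subseteq R$ is an ideal containing $\m$, hence equals $\m$ or $R$; it equals $R$ only if $R:\m$ contains an inverse of some element of $\m$, which would make that element a unit times a nonzerodivisor generating a reduction, again contradicting non-regularity. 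So $\tr(\m)=(R:\m)\m=\m$ by \Cref{traceproperties}(1), i.e. $\m$ is a trace ideal. Finally, reflexivity: apply \Cref{reflem}(2) with $M=N=\m$? That is not quite the right shape; instead I would use \Cref{reflem}(2) in the form that $\m=\tr(\m)=(R:\m)\m$ and note $R:\m\cong\Hom_R(\m,R)=\m^*$ is reflexive by the $N=R$ case inside the proof of \Cref{reflem}, and then $\m=\Hom_R(\m^*,R)$-type reasoning; more cleanly, \Cref{fractionalreflexive} says a regular fractional ideal is reflexive iff $\m=R:(R:\m)$, and since $R:\m=\m:\m$ is a ring, $R:(R:\m)=R:(\m:\m)$, and one checks this recovers $\m$ using that $\m:\m$ is precisely the endomorphism ring. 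Alternatively, and most efficiently, cite \Cref{goto2020} again: $R:\m=\m:\m=:S$ is a finite birational extension (finite because $\overline R$ is, assuming that hypothesis carries, or because $S\subseteq\overline R$... one needs module-finiteness, which holds when $\overline R$ is finite over $R$), and $\m=\cond_R(S)$, so $\m$ is a reflexive regular trace ideal.

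The main obstacle I anticipate is the module-finiteness bookkeeping in the second part: \Cref{goto2020} requires $S$ to be a \emph{finite} birational extension, and to realize $\m$ as a conductor I need $\m:\m$ to be module-finite over $R$. This is automatic if $\overline R$ is finite over $R$ (then $\m:\m\subseteq\overline R$ is a submodule of a Noetherian module), but the statement of \Cref{c&m} for the maximal-ideal assertion does not visibly assume $\overline R$ finite. So the honest route is to prove directly that $\m$ is a trace ideal (via \Cref{traceproperties}, the nontrivial content being $\m:\m\supsetneq R$, i.e. the existence of a nontrivial endomorphism of $\m$, which fails only for a DVR) and then get reflexivity from \Cref{reflem}(2): writing $\m=(R:\m)\m$ and knowing $R:\m=\Hom_R(\m,R)$ is reflexive, apply part (2) of \Cref{reflem} to $\Hom_R(\m, R:\m)$ after identifying it appropriately, or simply invoke \Cref{fractionalreflexive}. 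The first part (the conductor) should be routine given \Cref{goto2020}.
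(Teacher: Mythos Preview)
Your treatment of the conductor is correct and matches the paper exactly: both simply invoke \Cref{goto2020}.

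For the maximal ideal, your approach is viable but contains two imprecisions and one unnecessary worry, and it is more circuitous than the paper's argument.

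First, the claim that $R:\m\supsetneq R$ holds ``as $R$ is not regular'' mislocates the reason: in fact $R\subsetneq R:\m$ holds for \emph{any} one-dimensional Cohen--Macaulay local ring (including a DVR), because $\operatorname{grade}(\m)=1$ forces $\Ext^1_R(k,R)\neq 0$ and hence $\m^*\supsetneq R$. Non-regularity enters only at the next step.

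Second, the step ``$(R:\m)\m=R$ only if $R:\m$ contains an inverse of some element of $\m$'' is not correct as stated: from $\sum x_iy_i=1$ with $x_i\in R:\m$, $y_i\in \m$, no single $x_i$ need invert any $y_i$. The correct argument is that $(R:\m)\m=R$ means $\m$ is invertible, hence projective, hence free (local ring), hence principal; so $R$ would be a DVR. The paper phrases this more simply: $\m\subseteq\tr(\m)$ always, and $\tr(\m)=R$ if and only if $\m$ is principal, so non-regularity gives $\tr(\m)=\m$.

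Third, your worry about module-finiteness of $S=\m:\m=\End_R(\m)$ is unfounded: $\Hom_R(\m,\m)$ is finitely generated over any Noetherian $R$ since $\m$ is. So your alternative route through \Cref{goto2020} with $S=\End_R(\m)$ would in fact work.

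For reflexivity, the paper executes exactly the \Cref{fractionalreflexive} argument you gesture at but do not finish: since $R\subsetneq R:\m$, pick $x\in(R:\m)\setminus R$; then $1\cdot x\notin R$ shows $1\notin R:(R:\m)$, so $R:(R:\m)$ is a proper ideal of $R$ containing $\m$, hence equals $\m$. This one line replaces your several tentative approaches.
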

\begin{proof}
The first statement follows from \Cref{goto2020}. For the second statement, since $\text{grade}(\m)=1$, $R\subsetneq \m^*$ and hence $\m$ is reflexive by \Cref{fractionalreflexive}. Since $\m\subseteq \tr(\m)$ and $\tr(\m)=R$ if and only if $\m$ is principal, $\m$ is a trace ideal.
\end{proof}

\subsection*{Support and trace}

Let $\cmf(R) = \{ M\in \cm(R) \ | \ \supp(M)= \spec{R}\}$ denote the subcategory of $\cm(R)$ of modules with full support.  

\begin{lem}\label{tracereg}
Suppose that  $M\in \cm(R)$. Then $\tr(M)$ is a regular ideal if and only if $M_P$ has an $R_P$-free summand for each $P\in \Min(R)$.  Thus if $R$ is reduced then  $M\in \cmf(R)$ if and only if $\tr(M)$ is a regular ideal. 
\end{lem}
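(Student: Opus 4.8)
The plan is to reduce the statement, by localizing at the minimal primes of $R$, to the elementary fact that over a local ring a finitely generated module has a free direct summand exactly when its trace ideal is the unit ideal; everything else is formal.

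First I would reformulate regularity of $\tr(M)$. Recall that $\tr(M)$ is a regular ideal precisely when it contains a nonzerodivisor of $R$; since $R$ is Cohen--Macaulay, $\ass R=\Min R$, so the zerodivisors of $R$ are exactly the elements of $\bigcup_{P\in\Min R}P$. Hence $\tr(M)$ is regular if and only if $\tr(M)\not\subseteq P$ for every $P\in\Min R$ (the borderline case $\tr(M)=R$ being harmless: by the local fact below, applied to the local ring $R$ itself, $M$ then has an $R$-free summand, so every $M_P$ does, and the asserted equivalence holds trivially). Next, since $M$ is finitely generated over the Noetherian ring $R$, the formation of $\Hom$, and therefore of the trace, commutes with localization, so $\tr_R(M)_P=\tr_{R_P}(M_P)$ for every prime $P$. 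Thus $\tr(M)\not\subseteq P$ is equivalent to $\tr_{R_P}(M_P)=R_P$, and the first equivalence in the Lemma reduces to: for each $P\in\Min R$, one has $\tr_{R_P}(M_P)=R_P$ if and only if $M_P$ has an $R_P$-free summand.

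This local statement I would prove directly. Here $R_P$ is an Artinian local ring, because $P$ is a minimal prime of a one-dimensional ring, and $M_P$ is a finitely generated $R_P$-module. If $M_P\cong R_P\oplus N'$, the composite $R_P\hookrightarrow M_P\twoheadrightarrow R_P$ shows $1\in\tr_{R_P}(M_P)$. Conversely, if $\tr_{R_P}(M_P)=R_P$, write $1=\sum_i\phi_i(x_i)$ with $\phi_i\in\Hom_{R_P}(M_P,R_P)$ and $x_i\in M_P$; since $R_P$ is local, at least one $\phi_i(x_i)$ is a unit, so that $\phi_i$ is a split surjection $M_P\twoheadrightarrow R_P$ and $M_P$ has an $R_P$-free summand. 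This settles the first ``if and only if''. For the ``thus'': when $R$ is reduced, each $R_P$ with $P\in\Min R$ is a field, so $M_P$ having an $R_P$-free summand just means $M_P\neq 0$; and since $\spec R=\Min R\cup\{\m\}$ with $M_{\m}=M$, the condition $\supp M=\spec R$ is equivalent to $M_P\neq 0$ for all $P\in\Min R$ (which already forces $M\neq 0$). Combining with the first part gives $M\in\cmf(R)\iff\tr(M)$ is regular.

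The argument has essentially no serious obstacle: the only real content is the local splitting fact of the third paragraph (standard), together with the observation that trace commutes with localization for finitely generated modules, which is the one place finiteness is used. It is perhaps worth noting that the first equivalence does not actually require $M\in\cm(R)$; the maximal Cohen--Macaulay hypothesis is present only because that is the setting in which the Lemma will be applied.
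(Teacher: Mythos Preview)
Your proof is correct and follows essentially the same route as the paper's: reduce regularity of $\tr(M)$ to the condition $\tr(M)_P=R_P$ at each minimal prime, use that trace commutes with localization, and invoke the local fact that $\tr_A(N)=A$ over a local ring $A$ exactly when $N$ has a free summand. The paper's proof is a two-line sketch that leaves the local splitting fact and the reduced-case unpacking implicit, whereas you spell these out; the parenthetical about the ``borderline case $\tr(M)=R$'' is unnecessary (that case fits the equivalence without special treatment), and the Artinian observation about $R_P$ is not actually used in your argument, but neither of these is a gap.
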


\begin{proof}
As we are in dimension one, clearly $\tr(M)$ is regular if and only if $\tr(M)_P=R_P$ for any $P\in \Min(R)$. As trace localizes, we have $\tr(M)_P = \tr_{R_P}(M_P)$, and the result follows. 
\end{proof}

\begin{rem}\label{remfull}
Here we discuss why when studying $\cm(R)$, one can reduce to the case of $\cmf(R)$ and hence regular trace ideals thanks to \Cref{tracereg}. Let $\Min(R)=\{P_1,\dots, P_n\}$ denote the set of minimal primes of $R$ and $(0)= \cap Q_i$ with $\sqrt{Q_i}=P_i$. For a subset $X \subset \Min(R)$, let $R_X = R/\cap_{P_i\in X}{Q_i}$. Then $\cm(R) = \cup_{X\subset \Min(R)} \cmf(R_X)$. Thus, understanding $\cm(R)$ amounts to understanding $\cmf(R_X)$ for all subsets $X$.  
\end{rem}

It is well known that $\mathfrak{c}$ and $\m$ are reflexive trace ideals (\Cref{c&m}). In particular, we can investigate other such ideals. 
We set up some further notation which we will use throughout. 
\begin{align*}
 \tracecat(R) &:=\{I~|~ I~\text{is a regular trace ideal}\}\\
 \reftracecat(R) &:= \{I~|~ I~\text{is a regular reflexive trace ideal}\}
 \end{align*}

Note that if $R$ is a complete local domain, then from \cite[Theorem 2.5]{maitra2020partial} we get that for any ideal $I\subset R$, $\ds I^{**}$ is isomorphic to an ideal which contains the conductor $\mathfrak{c}$. This suggests an immediate link, relevant to our study, with the conductor ideal $\mathfrak{c}$.  The following theorem gives a generalization to this fact.

\begin{thm}\label{thMai}
Let $R$ be a one-dimensional Cohen-Macaulay local ring with conductor ideal $\cond$. Any regular ideal $I$ that contains a principal reduction is isomorphic to another fractional ideal $J$ such that $R:J$ (which is isomorphic to $I^*$) contains the conductor $\cond$. In particular, if the residue field is infinite, any reflexive regular ideal of $R$ is isomorphic to an ideal containing $\cond$.

\end{thm}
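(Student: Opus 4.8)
The plan is to work with a fixed principal reduction $x$ of $I$, i.e., a regular element $x \in I$ such that $I^{n+1} = xI^n$ for some $n$ (existence of such $x$ when $k$ is infinite is classical, and is the reason for the residue-field hypothesis in the final sentence). Passing from $I$ to the isomorphic fractional ideal $\frac{1}{x} I \subseteq Q(R)$, I may assume $x = 1 \in I$, i.e., $R \subseteq I$; under this identification $R : I = I^{-1}$ and $I^*\cong R:I$ by \Cref{convention_remark_2}. So the task reduces to showing $\cond \subseteq R : I$, equivalently $\cond \cdot I \subseteq R$.

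The key step is to show $I \subseteq \overline{R}$, where $\overline{R}$ is the integral closure of $R$ in $Q(R)$. Indeed, once $R \subseteq I \subseteq \overline{R}$ holds, then $\cond \cdot I \subseteq \cond \cdot \overline{R} = (R : \overline{R})\overline{R} \subseteq R$ by definition of the conductor, which is exactly $\cond \subseteq R : I$. To see $I \subseteq \overline{R}$: the reduction relation $I^{n+1} = x I^n$ with $x = 1$ says $I^{n+1} = I^n$, hence $I^n$ is a subring of $Q(R)$ containing $R$ and finite as an $R$-module (it is a finitely generated $R$-module since $R$ is Noetherian and $I$ is a fractional ideal). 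A module-finite $R$-subalgebra of $Q(R)$ is integral over $R$, so $I^n \subseteq \overline{R}$; since $1 \in I$ we have $I \subseteq I^n \subseteq \overline{R}$, as desired.

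For the final sentence of the theorem: if $I$ is reflexive and regular and $k$ is infinite, then $I$ contains a principal reduction, so by the first part $I \cong J$ with $\cond \subseteq R : J$; and since $I \cong J$ are reflexive regular fractional ideals, \Cref{fractionalreflexive} gives $J = R : (R : J)$, so $J$ itself is an honest ideal isomorphic to $I$ — after multiplying by a suitable regular element one lands inside $R$ — containing $\cond$. (The statement ``isomorphic to an ideal containing $\cond$'' should be read up to the usual rescaling identifications between fractional and integral ideals.)

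The main obstacle I anticipate is purely bookkeeping: tracking the isomorphism $I \cong \frac{1}{x}I$ carefully so that the containments $R \subseteq \frac{1}{x}I \subseteq \overline{R}$ and the identification $I^* \cong R : (\tfrac 1x I)$ are simultaneously valid, and making sure the final ``contains the conductor'' claim is stated for a genuine $R$-ideal rather than a fractional one. There is no deep input needed beyond the existence of principal reductions and the elementary fact that a module-finite birational extension is integral; the substance is the observation that a reduction with reduction number making $I^{n+1}=I^n$ forces $I^n$ to be a ring.
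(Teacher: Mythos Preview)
Your argument for the main assertion is correct and essentially identical to the paper's: set $J=\frac{I}{x}$, observe $J\subseteq \overline{R}$ (you phrase this as ``$I^n$ is a module-finite subring of $Q(R)$'', the paper as ``$J\subseteq I^n:I^n\subseteq \overline R$'', which is the same observation), and conclude $\cond\subseteq R:J$.

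Your derivation of the final sentence, however, has a gap. From the first part applied to $I$ you obtain $I\cong J$ with $\cond\subseteq R:J$. This tells you that $R:J$, which \emph{is} an honest ideal of $R$ (since $R\subseteq J$ forces $R:J\subseteq R$) and is isomorphic to $I^*$, contains $\cond$. It does \emph{not} produce an ideal isomorphic to $I$ containing $\cond$. Your sentence ``$J$ itself is an honest ideal \dots\ containing $\cond$'' is not justified: $J\supseteq R$ is typically not contained in $R$, and rescaling $J$ into $R$ by a regular element destroys any relationship with $\cond$. Invoking $J=R:(R:J)$ does not help here; that identity relates $J$ to $R:J$, not $\cond$ to $J$.

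The fix is the one-line trick the paper uses: apply the first part to $I^*$ rather than to $I$. Since $I^*$ is again a regular ideal with a principal reduction (infinite residue field), you get $I^*\cong J'$ with $R\subseteq J'$ and $\cond\subseteq R:J'$. Now $R:J'$ is an ideal of $R$ containing $\cond$, and $R:J'\cong (I^*)^*\cong I$ by reflexivity. That is the ideal you want.
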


\begin{proof}
Assume $I$ is a regular ideal of $R$ with a principal reduction $x$. Let $J=\frac{I}{x}= \{\frac{a}{x}, a\in I\}$. Clearly $I\cong J$. Since $I^{n+1}=xI^n$ for some $n$, we have $J\subset I^n:I^n\subset \overline R$. But then $R:J \supset R:\overline R=\cond$. The last statement follows by replacing $I$ with $I^*$ and using the fact that $I^{**}\cong I$.
\end{proof}

%In a similar vein, we also have the following proposition.
%\begin{prop}
%Let $S$ be a finite birational extension of $R$ and $\cond_R(S):=R:S$. If for a regular ideal $I$, we have $I^*S\simeq S$, then $I^{**}\cong J$ for some ideal $J$ of $R$ such that $\cond_R(S)\subseteq J$. 
%\end{prop}
%\begin{proof}
%Since $I^*S\simeq S$, we have for any regular element $x\in I$, that $((x):_R I)S$ is a regular principal ideal of $S$. Choose $a\in ((x):_R I)S$ regular such that $a^{-1}((x):_R I)S\subseteq S$. Set $J:=(a^{-1}((x):_R I))^*$, so that $\cond_R(S)\subseteq J$. Since $I^{*}\cong a^{-1}((x):_R I)$ as $R$-modules, we have $I^{**}\simeq J$.
%\end{proof}

%The following extends \cite[Proposition 6.5]{herzog2019trace}.

\begin{cor}\label{tracecatrem}
Let $R$ be as in \Cref{thMai}. For any regular  ideal $I$ with a principal reduction, $\tr(I)\supset \cond$. 
\end{cor}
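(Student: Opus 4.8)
The plan is to derive this as a direct consequence of \Cref{thMai} together with \Cref{traceproperties}(1). First I would recall from \Cref{traceproperties}(1) that for a regular fractional ideal $M$ we have $\tr(M)=(R:M)M$, and that the trace ideal of a module is unchanged under isomorphism. Now let $I$ be a regular ideal of $R$ with a principal reduction $x$. By \Cref{thMai}, after replacing $I$ by the isomorphic fractional ideal $J=I/x$, we may assume that $R:J\supseteq \cond$. Since $\tr(I)=\tr(J)=(R:J)J$, it suffices to show that $(R:J)J\supseteq \cond$.

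The key observation is that $J$ contains a unit of $Q(R)$: indeed $x\in I$ gives $x/x=1\in J$. Therefore $(R:J)J\supseteq (R:J)\cdot 1=R:J\supseteq\cond$, which is exactly the claim. So the proof is essentially a two-line chain of inclusions once \Cref{thMai} is invoked; there is no real obstacle here, since all the work has already been done in establishing \Cref{thMai}.

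Alternatively, and perhaps more transparently, one can argue without passing through the normalized ideal: from the proof of \Cref{thMai}, $J=I/x\subseteq \overline{R}$, so $R:J\supseteq R:\overline{R}=\cond$; and since $1\in J$, the factorization $\tr(I)=\tr(J)=(R:J)J$ gives $\tr(I)\supseteq R:J\supseteq\cond$. I would present the argument in this form, citing \Cref{thMai} and \Cref{traceproperties}(1), and noting that $1\in J$ is what makes the containment $(R:J)J\supseteq R:J$ hold. The only point worth a word of care is that $\tr$ is defined for fractional ideals containing a nonzerodivisor (so that \Cref{traceproperties} applies), which holds here since $J\cong I$ is regular.
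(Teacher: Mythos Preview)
Your argument is correct and is essentially identical to the paper's own proof: both pass to $J=I/x$, note that $1\in J$ (equivalently $R\subset J$), and conclude $\tr(I)=\tr(J)=(R:J)J\supseteq R:J\supseteq\cond$. The only difference is presentational---the paper states it in two lines while you spell out the invariance of $\tr$ under isomorphism and the applicability of \Cref{traceproperties}.
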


\begin{proof}
Let $J$ be the fractional ideal as in \Cref{thMai}. Note that $R\subset J$. Since $R:J\supset \cond$, we have $\tr(I)= \tr(J)=J(R:J)\supset R:J\supset \cond$.
\end{proof}

\begin{lem}\label{traceformula}
Let $R$ be as in \Cref{thMai}. Suppose that $I$ is a regular ideal and $x\in I$ be a non zero divisor. Then $\tr(I)= I((x):_R I):_R x$. In particular, $\tr(I)\supseteq (x):_R I$.
\end{lem}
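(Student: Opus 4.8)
The plan is to reduce everything to the identity $\tr(I)=(R:I)I$ from \Cref{traceproperties}(1) and then rewrite the fractional ideal $R:I$ in terms of the honest $R$-ideal $(x):_R I$, exploiting that multiplication by the non-zero-divisor $x$ is injective on $Q(R)$.

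First I would verify the equality $R:I=\tfrac{1}{x}\big((x):_R I\big)$ inside $Q(R)$. If $a\in (x):_R I$, then $aI\subseteq xR$, so $\tfrac{a}{x}I\subseteq R$ and $\tfrac{a}{x}\in R:I$. Conversely, if $q\in R:I$ then $qI\subseteq R$; since $x\in I$ this gives $qx\in R$, and moreover $qxI=x(qI)\subseteq xR=(x)$, so $qx\in (x):_R I$ and $q=\tfrac{1}{x}(qx)\in \tfrac1x\big((x):_R I\big)$. Multiplying by $I$ yields
$\tr(I)=(R:I)I=\tfrac{1}{x}\,I\big((x):_R I\big)$.

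Next I would set $J:=I\big((x):_R I\big)$ and note that $J$ is an ideal of $R$ contained in $xR$, because each product $i\cdot a$ with $i\in I$ and $a\in (x):_R I$ lies in $xR$ by definition of the colon. Hence $\tfrac{1}{x}J\subseteq R$. Since $x$ is a non-zero-divisor, for $a\in R$ we have $a\in J:_R x\iff ax\in J\iff a\in\tfrac{1}{x}J$; so $J:_R x=\tfrac{1}{x}J=\tr(I)$, which is exactly the asserted formula $\tr(I)=I\big((x):_R I\big):_R x$. For the ``in particular'' clause, since $x\in I$ we get $x\big((x):_R I\big)\subseteq I\big((x):_R I\big)=J$, so any $a\in (x):_R I$ has $ax\in J$, i.e. $a\in J:_R x=\tr(I)$; thus $(x):_R I\subseteq \tr(I)$. (Equivalently, one may argue directly: by \Cref{convention_remark_2}, $(x):_R I=x(R:I)\subseteq I(R:I)=\tr(I)$ using \Cref{traceproperties}(1).)

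The argument is entirely formal; the only thing needing care is bookkeeping of whether a colon is taken in $R$ or in $Q(R)$, together with repeated cancellation of the non-zero-divisor $x$. I do not expect any genuine obstacle here.
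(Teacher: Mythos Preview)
Your proof is correct and follows essentially the same route as the paper: both hinge on the identity $\tr(I)=(R:I)I$ from \Cref{traceproperties}(1) together with the observation that $R:I=\tfrac{1}{x}\big((x):_R I\big)$ (the paper packages this as $\tr(I)=\tr(I/x)=(I/x)\big(R:(I/x)\big)$ and computes $R:(I/x)=(x):_R I$, which is the same identity viewed from the other side). Your treatment of the ``in particular'' clause is also the same as the paper's, which simply notes $x\big((x):_R I\big)\subseteq I\big((x):_R I\big)$.
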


\begin{proof}
Let $J=\frac{I}{x}\cong I$. Then $\tr(I)= \tr(J) = J(R:J) = \frac{I}{x}((x):_R I)$. Thus, $\tr(I)= I((x):_R I):_R x$.  In particular, $\tr(I)\supset x((x):_R I):_R x=(x):_R I$.
\end{proof}

We can classify trace ideals with reduction number one :

\begin{cor}\label{xI}
Let $R$ be as in \Cref{thMai} and $I$ a regular ideal such that $I^2=xI$ for some regular element $x\in I$. Then $\tr I= (x):_RI$. In particular, $I$ is a trace ideal if and only if $(x):_R I=I$. In that case $I\cong I^*$ and hence $I$ is reflexive. 
\end{cor}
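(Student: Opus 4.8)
The plan is to deduce everything from \Cref{traceformula}. First I would apply that lemma with the given non zero divisor $x \in I$: it says $\tr(I) = I((x):_R I):_R x$. Under the hypothesis $I^2 = xI$, I want to simplify the product $I((x):_R I)$. The key observation is that if $a \in (x):_R I$, then $aI \subseteq (x)$, so $a I^2 \subseteq (x)I = xI$ (wait, more directly: $aI \subseteq xR$ means $aI^2 \subseteq xI$, but actually I should look at $I \cdot a$ where $a \in (x):_R I$). Let me instead reason: $I \cdot ((x):_R I) \subseteq (x) = xR$ by definition of the colon, and conversely I claim $x((x):_R I) \subseteq I((x):_R I)$ trivially since $x \in I$. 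Hmm, but I want the reverse containment to pin down $\tr(I)$ exactly.

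So the crucial step is to show $I((x):_R I) = x((x):_R I)$, equivalently that $(x):_R I$ is already "large enough" that multiplying by $I$ is the same as multiplying by $x$. Here is where $I^2 = xI$ enters. Set $\mathfrak a = (x):_R I$. We always have $x\mathfrak a \subseteq I\mathfrak a \subseteq (x) = xR$; dividing by $x$ (a non zero divisor), $\mathfrak a \subseteq \frac{1}{x} I \mathfrak a \subseteq R$. I need $I\mathfrak a \subseteq x\mathfrak a$. Consider the fractional ideal $J = I/x$, which is an $R$-subalgebra-like object: since $I^2 = xI$ we get $J^2 = J$, so $J$ is a ring (a birational extension of $R$), in fact $J = \End_R(J) = R:J$ by \Cref{traceproperties}-type reasoning, and $\mathfrak a = (x):_R I = x(R:I) = x(R:J)\cdot\frac1x \cdots$ — I would track the identifications from \Cref{convention_remark_2}: $I^* \cong x:_R I = \mathfrak a$. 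Then $I\mathfrak a = I \cdot (x : I)$ and since $J = I/x$ is a ring containing $R$ with $J = R:J$, the product $J \cdot (R:J) = J \cdot J = J$, and unwinding, $\frac{1}{x} I \cdot \mathfrak a = \frac1x I \cdot x (R:I) = I(R:I) = \tr(I) = J(R:J) = J$. That gives $\tr(I) = J$ as a fractional ideal; but also \Cref{traceformula} gives $\tr(I) = I\mathfrak a :_R x$, and combined with $I\mathfrak a \subseteq (x)$ we would get $\tr(I) = \frac1x I\mathfrak a$. The cleanest route: from $\tr(I) = I\mathfrak a :_R x$ and $I \mathfrak a \subseteq xR$, conclude $\tr(I) = \frac{1}{x}(I\mathfrak a)$, so $x\tr(I) = I\mathfrak a = I((x):_R I)$; and separately $x\tr(I) = x \cdot J = x(I/x) = I$ is false in general — rather I should use $\tr(I) \supseteq (x):_R I = \mathfrak a$ from the lemma, and show equality by a containment in the other direction using $I^2 = xI$.

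Let me restate the argument I would actually write. By \Cref{traceformula}, $(x):_R I \subseteq \tr(I)$. For the reverse: $\tr(I) = I(R:I)$; since $I^2 = xI$, we have $I \cdot I(R:I) = xI(R:I)$, i.e. $I \cdot \tr(I) = x\tr(I)$, so $\tr(I) \subseteq x\tr(I) : I \subseteq xR : I = (x):_R I$, wait that needs $x\tr(I) \subseteq (x)$, which holds iff $\tr(I) \subseteq R$ — true since $\tr(I)$ is an ideal of $R$. Actually more carefully: from $I\tr(I) = x\tr(I)$ and the fact that each element $t \in \tr(I)$ satisfies $It = x t' $ type relations... the clean statement is $\tr(I) \cdot I = x\tr(I) \subseteq xR$, hence $\tr(I) \subseteq xR :_R I = (x):_R I$. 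Combined with the lemma, $\tr(I) = (x):_R I$. That is the first claim.

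For the remaining assertions: $I$ is a trace ideal iff $I = \tr(I) = (x):_R I$, immediate. When this holds, $I \cong I^*$ because by \Cref{convention_remark_2} $I^* \cong x :_R I = (x):_R I = I$. Finally $I$ reflexive: since $I \cong I^*$, and $I^*$ is always reflexive over a ring satisfying $(S_1)$ — here I would invoke \Cref{reflem}(2) with $M = I$, $N = R$ (the $N = R$ case proved there shows $M^{***} = M^*$, i.e. $I^*$ is reflexive), noting that a regular ideal is locally free on minimal primes so the totally-reflexive hypotheses are automatic by \Cref{refrem}. Hence $I \cong I^*$ is reflexive. The main obstacle is getting the containment $\tr(I) \subseteq (x):_R I$ cleanly from $I^2 = xI$; once the identity $I\tr(I) = x\tr(I)$ is observed (which is just multiplying the idempotent-like relation $I^2 = xI$ by $R:I$ and using $\tr(I) = I(R:I)$), everything else is formal bookkeeping with the identifications already set up in \Cref{convention_remark_2} and \Cref{traceformula}.

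\begin{proof}
By \Cref{traceformula}, $(x):_R I \subseteq \tr(I)$, so it suffices to prove the reverse inclusion. Recall $\tr(I) = I(R:I)$ by \Cref{traceproperties}(1). Multiplying the hypothesis $I^2 = xI$ by $R:I$ gives
\[
I\cdot \tr(I) = I\cdot I(R:I) = xI(R:I) = x\,\tr(I).
\]
Since $\tr(I)$ is an ideal of $R$, $x\,\tr(I) \subseteq xR = (x)$, hence $I\cdot\tr(I) \subseteq (x)$, which says precisely $\tr(I) \subseteq (x):_R I$. Therefore $\tr(I) = (x):_R I$.

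In particular, $I$ is a trace ideal if and only if $I = \tr(I) = (x):_R I$. Assume this holds. By \Cref{convention_remark_2}, $I^* \cong x:_R I = (x):_R I = I$, so $I \cong I^*$. Finally, a regular ideal is locally free on the minimal primes of $R$, so by \Cref{refrem} the hypotheses of \Cref{reflem} are satisfied; the case $N=R$ of \Cref{reflem}(2) shows $I^*$ is reflexive, and hence $I \cong I^*$ is reflexive.
\end{proof}
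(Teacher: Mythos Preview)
Your proof is correct and follows essentially the same approach as the paper's: both obtain $(x):_R I \subseteq \tr(I)$ from \Cref{traceformula}, then derive the reverse inclusion from the identity $I\cdot\tr(I)=I^2(R:I)=xI(R:I)=x\,\tr(I)\subseteq (x)$, and finish via $(x):_R I\cong I^*$. The only difference is that you are more explicit about the reflexivity step, citing \Cref{reflem}(2) and \Cref{refrem} to justify that $I^*$ is reflexive, whereas the paper leaves this implicit.
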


\begin{proof}
From \Cref{traceformula}, $(x):_RI\subseteq \tr I$.
%Since $I$ is regular, $x$ must be a regular element. Let $J=\frac{I}{x} \cong I$. Then $= \tr J = JJ^{-1} \supset J^{-1} = R: \frac{I}{x}= $.
On the other hand, $I\tr I = I(II^{-1})=I^2I^{-1}=xII^{-1}=x\tr I$, so $\tr I\subset (x):_RI$. Thus $\tr I= (x):_RI$.
\par The next assertion is now obvious. For the last assertion, note that $(x):_R I\cong I^*$.  
\end{proof}

It is known that under mild assumptions, all integrally closed ideals are reflexive (\cite[Proposition 2.14]{corso2005integral}). The following proposition vastly generalizes this fact  and also provides a way of generating reflexive or trace ideals by contracting ideals from certain birational extensions. For how to find such extensions see \Cref{sre}.

\begin{prop}\label{integrallyclosedreflexive}
Let $R$ be as in \Cref{thMai}. Let $S$ be a finite birational extension such that $\cm(S)\subset \rf(R)$. Let  $I$ be a regular ideal of $R$. Then $IS\cap R \in \rf(R)$. If $I$  contains $\cond_R(S)$, then $IS\cap R \in \reftracecat(R)$. 
\end{prop}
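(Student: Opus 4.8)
The plan is to build both assertions on two facts: the modules $IS$ and $S$ are reflexive over $R$, and every ideal appearing below is locally free at the minimal primes of $R$. For the setup: since $S$ is a finite birational extension, $S$ is a torsion‑free $R$‑module of dimension one, hence maximal Cohen--Macaulay over $R$ and therefore a one‑dimensional Cohen--Macaulay ring with $Q(S)=Q(R)$; as $S\in\cm(S)$, the hypothesis $\cm(S)\subseteq\rf(R)$ makes $S$ a reflexive $R$‑module. Pick a nonzerodivisor $x\in I$. Then $IS$ is a finitely generated $S$‑submodule of $Q(R)=Q(S)$ containing the $S$‑nonzerodivisor $x$, i.e.\ a regular fractional ideal of $S$, hence maximal Cohen--Macaulay over $S$, hence reflexive over $R$. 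Finally, for each $P\in\Min(R)$ the ring $R_P$ is Artinian local, so $I_P=R_P$ (regularity of $I$) and $S_P=R_P$ (as $S\subseteq Q(R)$); consequently $(IS)_P=(IS\cap R)_P=(R+IS)_P=R_P$, so all three modules are locally free, hence locally totally reflexive, at the minimal primes, and the hypotheses of \Cref{reflem} are in force (cf.\ \Cref{refrem}).

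For $IS\cap R\in\rf(R)$ I would use the short exact sequence
$0\to IS\cap R\to R\oplus IS\to R+IS\to 0$,
where the first map is the diagonal and the second is $(a,b)\mapsto a-b$. The middle term is reflexive (a direct sum of reflexive modules), the right term is a regular fractional ideal hence torsionless (it embeds in $R$ after clearing denominators), and all three terms are locally totally reflexive at the minimal primes by the previous paragraph; \Cref{reflem}(1) then yields that $IS\cap R$ is reflexive. It is regular since $x\in IS\cap R$.

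For the second assertion, put $M=IS\cap R$ and assume $\cond_R(S)\subseteq I$. Since $\cond_R(S)=R:S$ is an ideal of $S$, we have $\cond_R(S)=\cond_R(S)S\subseteq IS$, whence $\cond_R(S)\subseteq M\subseteq R$. Applying $R:(-)$ to $\cond_R(S)\subseteq M$ gives $R:M\subseteq R:\cond_R(S)$, and $R:\cond_R(S)=R:(R:S)=S$ because $S$ is a reflexive regular fractional ideal (\Cref{fractionalreflexive}); thus $R:M\subseteq S$. Since $M\subseteq IS$ and $IS$ is an $S$‑module, $(R:M)M\subseteq S\cdot IS=IS$, while $(R:M)M\subseteq R$ by definition of $R:M$; hence $(R:M)M\subseteq IS\cap R=M$, i.e.\ $R:M\subseteq M:M$. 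The reverse inclusion $M:M\subseteq R:M$ holds because $M\subseteq R$, so $M:M=R:M$ and \Cref{traceproperties}(2) gives $M=\tr(M)$. Together with the first part this shows $M\in\reftracecat(R)$.

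The only non‑formal step — and the expected main obstacle — is the inclusion $R:(IS\cap R)\subseteq S$, which is what forces multiplication of $M$ by its $R$‑dual to land back in $IS$, and hence in $M$; this is precisely where the reflexivity of $S$ as an $R$‑module (i.e.\ the hypothesis $\cm(S)\subseteq\rf(R)$) is used, via \Cref{fractionalreflexive}. Everything else is bookkeeping with the displayed exact sequence and \Cref{reflem}, whose delicate ``locally totally reflexive at the minimal primes'' hypothesis is dispatched uniformly by the remark that regularity of $I$ makes all the relevant ideals locally free at the minimal primes of $R$.
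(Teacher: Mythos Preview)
Your proof is correct, and the trace-ideal half is essentially the paper's argument written out in full (the paper compresses your chain into $\tr J = JJ^{-1}\subseteq J\cond_R(S)^{-1}=JS$, so $\tr J\subseteq JS\cap R=J$, using the same observation $\cond_R(S)^{-1}=S$ coming from reflexivity of $S$). Where you genuinely diverge is the reflexivity half: the paper argues directly with double duals of fractional ideals, observing that $J=IS\cap R$ satisfies $J^{**}\subseteq (IS)^{**}=IS$ and $J^{**}\subseteq R$, hence $J^{**}\subseteq IS\cap R=J$. Your route via the Mayer--Vietoris-type exact sequence $0\to IS\cap R\to R\oplus IS\to R+IS\to 0$ and \Cref{reflem}(1) works just as well and is arguably more conceptual, but it carries more overhead---you have to verify torsionlessness of $R+IS$ and the ``locally totally reflexive at minimal primes'' hypothesis---whereas the paper's computation with $R:(R:-)$ is two lines and needs none of that scaffolding. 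In short: same destination, the paper takes the shortcut available for fractional ideals while you drive the general machinery.
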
 
\begin{proof}
Let $J= IS\cap R$. As $IS \in \cm(S)$, we have $J^{**}\subset (IS)^{**} =IS$, so $J^{**}\subset IS\cap R=J$, hence $J$ is reflexive. If $I$ contains $\cond_R(S)$ then $\cond_R(S) \subset J$. Now, $\tr J= JJ^{-1}\subset J\cond_R(S)^{-1}=JS$, so $\tr J\subset JS\cap R=J$. 
\end{proof}

The next two results are useful for studying colength two ideals. 

\begin{prop}\label{colength2minmult}
Let $(R,\m)$ be as in \Cref{thMai}. Further assume that $R$  has minimal multiplicity with infinite residue field. Let $I$ be a regular ideal of colength two. Then $I$ is reflexive if and only if it is either integrally closed or principal. 
\end{prop}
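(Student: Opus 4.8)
The plan is to analyze the colength-two ideal $I$ via its possible integral closures, exploiting that $R$ has minimal multiplicity with infinite residue field. First I would dispose of the principal case: if $I$ is principal it is free, hence reflexive, so assume $I$ is not principal. Since $\ell(R/I)=2$, the chain $I\subseteq \overline{I}\subseteq R$ forces $\overline{I}$ to be either $I$, or an integrally closed ideal of colength one, or $R$ itself. If $\overline{I}=I$ then $I$ is integrally closed and we are in one of the stated cases, so assume $\overline{I}\supsetneq I$; I want to show $I$ is then \emph{not} reflexive (recovering the ``only if'' direction), while simultaneously confirming the ``if'' direction that integrally closed or principal ideals are reflexive.

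For the ``if'' direction, principal is immediate, and the integrally closed case follows from \Cref{integrallyclosedreflexive} (or directly from \cite[Proposition 2.14]{corso2005integral}): every regular integrally closed ideal of colength two is reflexive since, over a ring of minimal multiplicity with infinite residue field, $\m$ itself satisfies $\m^2 = x\m$ for a minimal reduction $x$, so $\overline{R} = \m:\m$ is birational over $R$ with $\cm(\overline{R})\subseteq\rf(R)$ once one checks the relevant hypotheses; more cleanly, $\overline I$ being integrally closed means $\overline I$ is a contracted ideal $\overline I\,\overline R\cap R$ and \Cref{integrallyclosedreflexive} applies. So the real content is the converse: if $I$ is neither principal nor integrally closed, produce a strict inclusion $I\subsetneq I^{**}$. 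Here I would use \Cref{thMai}: replacing $I$ by an isomorphic copy, we may assume $\cond\subseteq I^*$, equivalently (by \Cref{fractionalreflexive}) reflexivity of $I$ amounts to $I = R:(R:I)$. Since $\overline I$ properly contains $I$ and $\ell(R/I)=2$, either $\overline I = R$ (so $I$ has a principal reduction $x$ with $I^2=xI$, or at least $\overline I = R$ forces $I$ to be a non-trivial ideal whose integral closure is the whole ring — then use \Cref{xI}: $I^2 = xI$ would make $\tr I = (x):_R I$, and one checks $I\neq I^{**}$ unless $I=(x):_RI$, a colength-two condition that pins $I$ down to the integrally closed case) or $\ell(R/\overline I)=1$, i.e. $\overline I$ is an integrally closed ideal with $\ell(\overline I/I)=1$.

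The main obstacle will be this last subcase, $\ell(\overline I / I) = 1$ with $\overline I$ integrally closed of colength one: I must show such an $I$ cannot be reflexive. The strategy is to compute $I^* = R:I$. Since $I\subseteq \overline I$, we get $R:\overline I\subseteq R:I$, and because $\overline I$ is integrally closed of colength one under minimal multiplicity — which forces $\overline I$ to be (isomorphic to) $\m$, or a minimal reduction situation making $R:\overline I$ explicitly a birational extension $S$ with $\overline I S = S$ — I can control $R:I$ tightly. The key computation: $I^{**} = R:(R:I)$, and I expect $R:I = R:\overline I$ exactly when $I$ fails to be reflexive, because dualizing is insensitive to the colength-one perturbation at the level of the first dual but \emph{recovers} $\overline I$ (or something containing $I$ properly) upon the second dual; concretely $I^{**}\supseteq \overline I \supsetneq I$. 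Making this rigorous requires the minimal-multiplicity hypothesis to guarantee that the only integrally closed ideals in sight are powers of $\m$ up to isomorphism, or at least that $R:\overline I$ is stable; I would lean on \Cref{xI} applied to $\overline I$ (which has reduction number one under minimal multiplicity) to get $\overline I$ reflexive with $\overline I \cong \overline I^*$, and then a length count $\ell(I^{**}/I)\geq \ell(\overline I/I) = 1 > 0$ finishes it. The delicate point is justifying $I^{**}\supseteq \overline I$ rather than merely $I^{**}\supseteq I$ — this should come from applying $R:(R:-)$ to the inclusion $I\subseteq\overline I$ together with reflexivity of $\overline I$, provided $R:I = R:\overline I$, which in turn is the colength-bookkeeping step I would verify by hand in the colength-two setting.
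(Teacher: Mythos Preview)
Your overall architecture matches the paper's proof: reduce to the case where $I$ is neither principal nor integrally closed, observe $\overline I=\m$, and show $I^{**}=\m\neq I$. Two points, however.

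First, the case $\overline I=R$ is vacuous and should be discarded immediately: if $1$ is integral over a proper ideal $I$ then an integral equation $1+a_1+\cdots+a_n=0$ with $a_i\in I^i$ gives $1\in I$, a contradiction. So the only live case is $\overline I=\m$, and your discussion of $\overline I=R$ via \Cref{xI} is a detour.

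Second, and more importantly, the step you flag as ``the delicate point'' --- proving $R:I=R:\overline I$ --- is the entire content of the argument, and ``colength bookkeeping'' is not a proof. The paper executes it cleanly as follows. Since $\overline I=\m$, any principal reduction $x$ of $\m$ is also a principal reduction of $I$. Minimal multiplicity gives $\m^2=x\m$, so \Cref{xI} yields $(x):_R\m=\tr(\m)=\m$ (the last equality because $\m$ is a trace ideal by \Cref{c&m}). Now $(x):_R I\supseteq (x):_R\m=\m$, and $(x):_R I\neq R$ since $(x):_R I=R$ would force $I\subseteq(x)$, hence $I=(x)$ (as $x\in I$), contradicting non-principality. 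Therefore $(x):_R I=\m=(x):_R\m$, and \Cref{convention_remark_2} gives $xI^*=x\m^*$, so $I^*=\m^*$ and $I^{**}=\m^{**}=\m\neq I$. This is the missing ingredient in your sketch; once you see that minimal multiplicity is used precisely to pin down $(x):_R\m=\m$ so that $(x):_R I$ is squeezed between $\m$ and $R$, the rest is immediate.
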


\begin{proof}
If $I$ is integrally closed then it is reflexive by \Cref{integrallyclosedreflexive}. Now assume $I$ is neither integrally closed nor principal. We can also assume $R$ is not regular, else the statement holds trivially as any non-zero ideal is principal. Then as $I$ is not integrally closed of colength two,  $\overline I=\m$. We can then pick a regular principal reduction $x$ for $\m$ and $I$. Since $R$ has minimal multiplicity and not regular, we note that $\m = \tr(\m)= (x):_R\m$ by \Cref{xI}. On the other hand $(x):_RI\supset (x):_R\m = \m$, so equality occurs. Using \Cref{convention_remark_2}, we get $\ds xI^*=x\m^*$ and hence $I^*=\m^*$ and $I^{**}=\m$. Thus $I$ is not reflexive.
\end{proof}

We classify colength two ideals that are contracted from $\End_R(\m)$.

\begin{prop}\label{contract}
Let $(R,\m)$ be as in \Cref{thMai}. Let $S=\End_R(\m)$ and $I$ be an ideal of colength two. Then $IS\cap R=I$ if and only if $\ell(S/IS)>\type(R)+1$. 
\end{prop}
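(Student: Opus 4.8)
The plan is to use the exact sequence $0 \to R/I \to S/IS \to S/(IS+R) \to 0$ (noting $IS \cap R = I$ is equivalent to the leftmost map being injective with the stated target) together with a length count that brings in $\type(R) = \mu(\omega_R) = \dim_k \Ext^1_R(k,R)$.

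First I would recall the structure of $S = \End_R(\m)$. Since $R$ has minimal multiplicity and is not regular (the regular case being trivial: $S = R$ and both sides of the claimed equivalence fail appropriately, or should be handled separately), we have $S = \m : \m = x^{-1}\m$ for any principal reduction $x$ of $\m$, so $S/R$ has length $\mu(\m) - 1 = e(R) - 1$. More usefully, I would use the standard fact (Corollary~\ref{xI} applied to $\m$, since $\m^2 = x\m$) that $\m = (x):_R\m = \m^*$ is a reflexive trace ideal, hence $S = \End_R(\m)$ and $\ell(S/R) = \ell(\m^{-1}/R) = \ell(R/\m:\m)$... in any case the key numerical input I want is $\ell(S/\m S)$ or equivalently the ``colength of $\m$ in $S$'', which relates to $\type(R)$: indeed $\m S = \m \cdot (\m:\m) = \m$ when... no — rather, since $R$ has minimal multiplicity, $\m S = xS$ is principal in $S$, and $\ell(S/xS) = e_x(S) = e(R)$ while $\ell(R/xR) = e(R)$ too, so $\ell(S/R) = 0$?? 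That forces me to be careful: the correct statement is $\ell(S/R) = e(R) - \mu(\m) = 0$ is wrong; rather $\mu(\m) = e(R)$ gives $\ell(S/R) = \mu(\m) - 1$. So the genuine content is identifying $\ell(S/\m S)$.

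The main computation: I would show that for an ideal $I$ of colength two, $I$ is \emph{not} contracted from $S$ iff $IS = \m S$, and then $\ell(S/IS) = \ell(S/\m S)$. So the proposition reduces to computing $\ell(S/\m S)$ and showing it equals $\type(R)+1$. For this I would use that $\m S = \m:\m \cdot \m$... more directly, $S/\m S \cong S/\m \otimes_R S$ and, via the conductor-type identity $\m S = x S \cap R$-style manipulations together with $\Hom_R(S,R) \cong$ (an ideal), one gets $\ell(S/\m S) = \ell(R/\m) + \ell(\m^*/R) = 1 + \type(R)$, using $\m^* = \Hom_R(\m,R) \cong x :_R \m$ and the minimal multiplicity hypothesis to identify $\ell((x:_R\m)/R)$ with $\type(R)$. [The identity $\mu(\omega_R) = \type(R)$ and the minimal-multiplicity computation $e(R) = \type(R)+1$ for such rings is the crux.] Then: if $I \supsetneq$ nothing forces $IS = \m S$; more precisely $IS \cap R \supseteq I$ always, and since $I$ has colength two, either $IS \cap R = I$ or $IS \cap R = \m$ or $IS \cap R = R$; the last is impossible as $\tr(I) \subseteq \m$ forces $1 \notin IS$; so $IS \cap R \in \{I, \m\}$, and $IS \cap R = \m$ exactly when $IS = \m S$ (since $\m S$ is the contraction's expansion), i.e. exactly when $\ell(S/IS) = \ell(S/\m S) = \type(R)+1$. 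Conversely if $IS \cap R = I$ then $\ell(S/IS) \ge \ell(S/\m S) + \ell(\m S/I S)$ and one checks $\m S \ne IS$, giving strict inequality $\ell(S/IS) > \type(R)+1$. Wait — I should double check the direction: if $I$ is contracted, $\ell(S/IS) > \type(R)+1$; if not, $= \type(R)+1$. That matches the statement.

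The main obstacle I anticipate is the precise computation of $\ell(S/\m S)$ in terms of $\type(R)$, and correctly handling the three-way case analysis for $IS \cap R$ — in particular ruling out $IS \cap R = R$ cleanly (using $\tr(I) \ne R$ since $I$ is not principal, hence $I \subseteq \m$ and $IS \subseteq \m S \subsetneq S$ as $\m S$ is a proper ideal of $S$ because $S$ is local-ish / semilocal with $\m S \subseteq \operatorname{rad}(S)$) and verifying $\m S \ne IS$ when $I$ is contracted. I would also need to be slightly careful about whether $S$ is local; $\m S$ sits inside the Jacobson radical of $S$ since $S$ is module-finite over $R$, which suffices for Nakayama-type arguments. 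I expect the rest to be routine length bookkeeping via the additivity of length on the relevant short exact sequences.
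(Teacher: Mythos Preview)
Your overall strategy---a three-way case analysis on $IS\cap R\in\{I,\m,R\}$, ruling out $R$ via $IS\subseteq \m S\subsetneq S$, and then showing $IS\cap R=\m\iff IS=\m S$---is valid and genuinely different from the paper's argument. The paper instead observes that $IS\cap R=I$ is equivalent to $S/IS$ being a \emph{faithful} $R/I$-module; since $R/I\cong k[t]/(t^2)$, every finite $R/I$-module is a direct sum of copies of $k$ and of $R/I$, and faithfulness is equivalent to at least one $R/I$-summand appearing, i.e.\ to $\ell(S/IS)>\mu(S/IS)=\mu(S)$. Both routes then reduce to the identity $\mu(S)=\ell(S/\m S)=\type(R)+1$. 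Your approach is slightly more hands-on; the paper's is cleaner and explains conceptually why $\type(R)+1$ enters as a \emph{number of generators}.

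There is, however, a real gap in your derivation of $\ell(S/\m S)=\type(R)+1$. You repeatedly invoke a minimal multiplicity hypothesis, but the proposition does not assume it, and it is not needed. The correct argument is this: since $R$ is not regular, $\m$ is a trace ideal (\Cref{c&m}), so by \Cref{traceproperties}(2) one has $S=\m:\m=R:\m$. Then $\m S=\m(\m:\m)=\m$ (both inclusions are immediate), hence
\[
\ell(S/\m S)=\ell(S/\m)=\ell(S/R)+\ell(R/\m)=\ell\bigl((R:\m)/R\bigr)+1=\type(R)+1,
\]
the last equality because $(R:\m)/R\cong \Ext^1_R(k,R)$. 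Your appeal to $e(R)=\type(R)+1$ is both unjustified here and irrelevant. Once this identity is established correctly, your case analysis goes through: if $IS\cap R=\m$ then $IS=\m S$ and $\ell(S/IS)=\type(R)+1$; if $IS\cap R=I$ then $IS\subsetneq \m S$ (else $I=\m S\cap R=\m$), so $\ell(S/IS)>\type(R)+1$.
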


\begin{proof}
It is clear that $IS\cap R=I$ if and only if $S/IS$ is a faithful $R/I$ module. As $R/I\cong k[t]/t^2$, $S/IS$ decomposes into a direct sum of $k$ and $R/I$, so it is faithful if and only if it is not a direct sum of $k$'s, in other words the length of $S/IS$ is strictly larger than it's number of generators. But $\mu(S) = \ell(S/\m S)= \ell(S/R)+1=\type R+1$.
\end{proof}

\section{$I$-Ulrich Modules}\label{secIulrich}

Throughout this section, we assume that $(R,\m,k)$ is a one-dimensional Cohen-Macaulay local ring. Let $I$ be an ideal of finite colength and $x$ a principal reduction. This section grew out of the realization that the equality $xM=IM$ for certain modules $M$ appears in many situations related to our investigation. For instance, it will turn out that when $I$ is a canonical ideal, such a module is reflexive and the finite extensions satisfying such conditions are ``strongly reflexive", see \Cref{dfnstrong}.    

We shall call these modules $I$-Ulrich, and define them slightly more generally without using principal reductions. Obviously the name and definition are inspired by the very well-studied notion of Ulrich modules, which are $\m$-Ulrich in our sense. Note that our definition is very much a straight generalization of an Ulrich module, and  not as restrictive as those studied in \cite{goto2014ulrich} and \cite{goto2016ulrich}.

\begin{dfn}\label{defIulrich}
    We say that $M\in \cm(R)$ is $I$-Ulrich if $e_I(M)=\ell(M/IM)$. Let $\ulrichcat_I(R)$ denote the category of $I$-Ulrich modules. 

\end{dfn}
Note that if $M\cong N$ in $\cm(R)$, then the same isomorphism takes $IM$ to $IN$, so $\ell(M/IM)=\ell(N/IN)$ for any ideal $I$ and so Ulrich condition is preserved under isomorphism. 

\begin{ex}\label{ulrichex}
Let $M\in \cm{(R)}$. As $\ell(I^nM/I^{n+1}M)=e_I(M)$ for $n\gg0$, it follows that $I^nM$ is $I$-Ulrich for $n\gg 0$.       
      
\end{ex}

\begin{dfn}
Let $B(I)$ denote the blow-up of $I$, namely the ring $\ds \bigcup_{n\geq 0} (I^n:I^n)$. Let $b(I) = \cond_R(B(I))$, the conductor of $B(I)$ to $R$.

\end{dfn}

\begin{rem}\label{B(I)}
  If $x$ is a principal reduction of $I$, then it is well-known that $B(I)= R[\frac{I}{x}]$, \cite[Theorem 1]{barucci1995biggest}. 

\end{rem}

We shall use some standard properties of Hilbert Samuel multiplicity in the proof of the next proposition. The reader can refer to various resources like, \cite{serre1965algebre}, \cite{serre1997algebre}, \cite[\text{mainly} Corollary 4.7.11]{bruns_herzog_1998}, \cite[\text{mainly} Proposition 11.1.0, 11.2.1]{Huneke-Swanson} for further details on multiplicity.

\begin{prop}\label{I-ulrich} Let $R$ be a one-dimensional Cohen-Macaulay local ring. Suppose that $x\in I$ is a (principal) reduction and $M \in \cm(R)$. The following are equivalent:
\begin{enumerate}
\item $M$ is $I$-Ulrich.
\item $IM= xM$. 
\item $IM\subseteq xM$.
\item $IM \cong M$.
\item $M \in \cm(B(I))$ (see \Cref{B(I)action}).
\item $M$ is $I^n$-Ulrich for all $n\geq 1$.
\item $M$ is $I^n$-Ulrich for infinitely many $n$.
\item $M$ is $I^n$-Ulrich for some $n\geq 1$.

\end{enumerate}
\end{prop}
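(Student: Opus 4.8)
The plan is to prove the cycle of implications $(1)\Leftrightarrow(2)\Leftrightarrow(3)$, then $(2)\Rightarrow(4)\Rightarrow(3)$, then handle the blow-up statement $(2)\Leftrightarrow(5)$, and finally derive the statements about powers $(6),(7),(8)$ from the already-established equivalences together with the observation that $I^n$ and $x^n$ play the same role.

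First I would establish the core equivalence $(1)\Leftrightarrow(2)\Leftrightarrow(3)$. Since $x$ is a reduction of $I$, we have $I^{n+1}=xI^n$ for $n\gg 0$, so $e_I(M)=e_{(x)}(M)=\ell(M/xM)$ (the last equality because $x$ is a parameter on the maximal Cohen–Macaulay module $M$, so it is a nonzerodivisor on $M$ and $e_{(x)}(M)=\ell(M/xM)$). Now $xM\subseteq IM$ always, so $\ell(M/IM)\le \ell(M/xM)=e_I(M)$, with equality iff $xM=IM$; this gives $(1)\Leftrightarrow(2)$. Clearly $(2)\Rightarrow(3)$, and $(3)\Rightarrow(2)$ is immediate since $xM\subseteq IM$ always holds. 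Next, $(2)\Rightarrow(4)$ is clear: $IM=xM\cong M$ because $x$ is a nonzerodivisor on $M$. For $(4)\Rightarrow(3)$ one uses a determinant/Nakayama trick: if $IM\cong M$, then since $IM$ and $xM$ have the same multiplicity ($e_I(IM)=e_I(M)$ and $x$ is still a reduction), one shows $IM$ is actually $I$-Ulrich; more directly, I would argue that from $IM\cong M$ we get $\ell(M/IM)=\ell(IM/I^2M)=\cdots$, and comparing with $e_I(M)=\lim \ell(I^nM/I^{n+1}M)$ forces $\ell(M/IM)=e_I(M)$. This last move actually goes through the powers statements, so it is cleaner to prove $(1)\Leftrightarrow(6)\Leftrightarrow(7)\Leftrightarrow(8)$ in parallel: since $x^n$ is a principal reduction of $I^n$, the equivalence $(1_{I^n})\Leftrightarrow x^nM=I^nM$ holds for each $n$; and $IM=xM$ implies $I^nM=x^nM$ by induction, while conversely $I^nM=x^nM\subseteq xM$ (as $x^n\in(x)$ and... wait, need $I^nM=x^nM$ to force $IM=xM$) — here I would use that $I^nM=x^nM$ implies $x^{n-1}(IM)\subseteq x^{n-1}I\cdot M\subseteq I^nM=x^nM$, hence $x^{n-1}IM\subseteq x^{n-1}xM$ wait that's automatic; instead: $IM\subseteq I^{?}$... the honest route is $I^nM = x^n M \Rightarrow I \cdot I^{n-1}M \subseteq x^n M \subseteq x(x^{n-1}M) = x\, I^{n-1}\cdots$ — better to say $I^nM=x^nM$ gives, after dividing by the nonzerodivisor $x^{n-1}$ inside $Q(R)$, that $xM \supseteq$ ... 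Let me instead just note $I^nM=x^nM\subseteq x^n M$ and $I^nM\supseteq x^{n-1}IM$, so $x^{n-1}IM\subseteq x^nM$, and cancelling $x^{n-1}$ (nonzerodivisor on $M$) gives $IM\subseteq xM$, which is $(3)$; so $(8)\Rightarrow(3)$, and trivially $(6)\Rightarrow(7)\Rightarrow(8)$ and $(2)\Rightarrow(6)$.

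For $(2)\Leftrightarrow(5)$ I would invoke \Cref{B(I)}: with a principal reduction $x$, $B(I)=R[I/x]$. If $IM=xM$ then $\frac{I}{x}M\subseteq M$, so $M$ is stable under multiplication by $I/x$, hence an $R[I/x]=B(I)$-module; it is maximal Cohen–Macaulay over $B(I)$ because $B(I)$ is module-finite over $R$ and $\depth$ is detected over $R$. Conversely if $M$ is a $B(I)$-module then $\frac{I}{x}M\subseteq M$, i.e. $IM\subseteq xM$, which is $(3)$. (The parenthetical reference to \Cref{B(I)action} presumably spells out the module structure carefully, so I would cite that.)

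The main obstacle I anticipate is the implication $(4)\Rightarrow$ (anything), i.e. upgrading the bare \emph{abstract} isomorphism $IM\cong M$ to the concrete containment $IM\subseteq xM$. An abstract isomorphism does not a priori respect the embedding into $M$ or the $I$-adic filtration. The clean fix is a multiplicity count: if $\varphi\colon M\xrightarrow{\ \sim\ } IM$, then $\ell(M/IM)=\ell(IM/\varphi(IM))=\ell(IM/I^2M)$, and iterating, $\ell(M/IM)=\ell(I^nM/I^{n+1}M)$ for all $n\ge 0$; since the right side equals $e_I(M)$ for $n\gg 0$, we get $\ell(M/IM)=e_I(M)$, which is $(1)$. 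So in fact $(4)\Rightarrow(1)$ directly, closing the loop without needing to produce the containment by hand — this is the step I would write out most carefully. Everything else is bookkeeping with reductions and the standard fact $e_{(x)}(M)=\ell(M/xM)$ for a parameter $x$ on a one-dimensional maximal Cohen–Macaulay module.
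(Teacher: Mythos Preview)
Your proof is correct and follows essentially the same route as the paper's: the core equivalence $(1)\Leftrightarrow(2)\Leftrightarrow(3)$ via $e_I(M)=\ell(M/xM)$, the passage $(2)\Rightarrow(4)\Rightarrow(1)$ (the paper phrases $(4)\Rightarrow(1)$ as ``$M\cong I^nM$ for $n\gg0$ and $I^nM$ is $I$-Ulrich by \Cref{ulrichex}'', which is your constant-Hilbert-function argument repackaged), and the blow-up step via $B(I)=R[I/x]$ all match. The one genuine variation is how you close the loop from $(8)$: you observe $x^{n-1}IM\subseteq I^nM=x^nM$ and cancel the nonzerodivisor $x^{n-1}$ to get $(3)$, whereas the paper instead proves $(8)\Rightarrow(1)$ by writing $\ell(M/I^nM)=e_{I^n}(M)=ne_I(M)$ as the telescoping sum $\sum_{i=0}^{n-1}\ell(I^iM/I^{i+1}M)$ with each summand $\le e_I(M)$, forcing equality at $i=0$; both arguments are short and elementary, and your cancellation trick is arguably the cleaner of the two.
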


\begin{proof}
As $x$ is a reduction of $I$, $\ell(M/xM)=e_I(M)$. So $(1)$ is equivalent to $\ell(M/xM)=\ell(M/IM)$, or $IM=xM$. The equivalence of $(2)$ and $(3)$ is obvious. Clearly $(2)$ implies $(4)$. Assuming $(4)$, then $M\cong I^nM$ for $n\gg 0$, so $M$ is $I$-Ulrich by \Cref{ulrichex}. We have established the equivalence of $(1)$ through $(4)$. 

Next, $(3)$ is equivalent to $\frac{I}{x}M\subseteq M$. In other words $(3)$ implies that $M\in \cm(B(I))$, since $R[\frac{I}{x}]=B(I)$.
Since $B(I) = B(I^n)$ for any $n\geq 1$, we have $(5) \Rightarrow (6)$. Clearly $(6) \Rightarrow (7) \Rightarrow (8)$. Finally, assume $(8)$. We have $\ell(M/I^nM)=e_{I^n}(M) = ne_I(M)$. Note that for each $i$, $I^iM$ is in $\cm(R)$ and hence, using properties of multiplicities, we get $\ell(I^iM/I^{i+1}M)\leq \ell(I^iM/xI^iM)=e_I(I^iM)=e_I(M)$ for each $i$. Thus, equality must occur for each $i$; in particular, it occurs for $i=0$, which shows that $M$ is $I$-Ulrich. 
\end{proof}

Without any assumption on the existence of a principal reduction, the following still holds:

\begin{thm}\label{I-UlrichThm}
Let $R$ be a one-dimensional Cohen-Macaulay local ring. Let $I$ be a regular ideal and $M \in \cm(R)$. The following are equivalent:
\begin{enumerate}
\item $IM \cong M$.
\item $M$ is $I$-Ulrich.
\item $M$ is $I^n$-Ulrich for all $n\geq 1$.
\item $M$ is $I^n$-Ulrich for infinitely many $n$.
\item $M$ is $I^n$-Ulrich for some $n\geq 1$.
\item $M \in \cm(B(I))$ (see \Cref{B(I)action}).
\end{enumerate}

\end{thm}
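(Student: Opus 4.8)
The plan is to reduce \Cref{I-UlrichThm} to the already-proven \Cref{I-ulrich} by passing to a faithfully flat extension in which $I$ acquires a principal reduction, namely the extension $R \to R' := R[X]_{\m R[X]}$ obtained by adjoining an indeterminate. Over $R'$ the residue field is infinite, so the ideal $IR'$ has a principal reduction $x$; moreover $R'$ is again a one-dimensional Cohen-Macaulay local ring, $R \to R'$ is faithfully flat with zero-dimensional (in fact trivial) closed fiber, and $M' := M \otimes_R R'$ lies in $\cm(R')$. The key point is that each of the six conditions is insensitive to this base change: lengths of finite-length modules are preserved (the closed fiber is a field), Hilbert--Samuel multiplicities are preserved under such flat local maps, $IM \cong M$ base-changes to $IM' \cong M'$ and conversely descends because $R \to R'$ is faithfully flat, and the blow-up algebra commutes with this localization, i.e. $B(IR') = B(I) \otimes_R R'$, so $M \in \cm(B(I))$ if and only if $M' \in \cm(B(IR'))$.

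Concretely I would argue as follows. First I would record the standard facts about $R' = R[X]_{\m R[X]}$: it is faithfully flat over $R$, local with maximal ideal $\m R'$ and residue field $k(X)$ infinite, one-dimensional Cohen-Macaulay, and $\dim M' = \dim M$, $\depth M' = \depth M$ so $M' \in \cm(R')$; also $\ell_{R'}(N \otimes_R R') = \ell_R(N)$ for finite-length $R$-modules $N$, and $e_{IR'}(M') = e_I(M)$. Next, since $k(X)$ is infinite, $IR'$ has a principal reduction, so \Cref{I-ulrich} applies over $R'$ and gives the equivalence, over $R'$, of: $IM' \cong M'$; $M'$ is $IR'$-Ulrich; $M'$ is $(IR')^n$-Ulrich for all $n$ / infinitely many $n$ / some $n$; and $M' \in \cm(B(IR'))$. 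Then I would transfer each condition across $R \to R'$: (2)$\Leftrightarrow$(2)$'$ because $\ell(M/IM) = \ell(M'/IM')$ and $e_I(M) = e_{IR'}(M')$, and similarly for the $I^n$-Ulrich statements (3)--(5) since $(IR')^n = I^n R'$; (1)$\Leftrightarrow$(1)$'$ because $IM \cong M$ gives $IM' \cong M'$ by applying $-\otimes_R R'$, and the converse follows by faithfully flat descent of the isomorphism $IM' \cong M'$ (or, more concretely, $IM$ is a submodule of $M$ with $\ell(M/IM) = \ell(M'/IM') = \ell_{R'}$ of a module that is $\cong$-forced to have the right multiplicity, routing everything through condition (2)); and (6)$\Leftrightarrow$(6)$'$ using $B(I) \otimes_R R' = B(IR')$ together with the fact that $M$ is a $B(I)$-module iff $M' = M \otimes_R R'$ is a $B(IR')$-module, which again is faithfully flat descent of the module structure, i.e. descent of the vanishing of the obstruction map $B(I) \otimes_R M \to M$ being well-defined.

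An alternative, more self-contained route avoids faithfully flat base change for the core equivalences by noting that (1), (2) (in the form $M$ is $I$-Ulrich, i.e. $e_I(M) = \ell(M/IM)$), and (6) can be compared directly. The implication $M \in \cm(B(I)) \Rightarrow IM \cong M$ holds since $B(I)$ contains $\frac{I}{x}$ for any principal reduction $x$ of $I$ in any faithfully flat extension, but intrinsically one can argue: if $M$ is a $B(I)$-module then $IM = I B(I) M$; and since $I B(I)$ is a principal ideal of $B(I)$ up to integral closure considerations... this gets delicate, so I would not pursue it. The clean approach is the base change, and the proof of \Cref{I-UlrichThm} should essentially read: "By \Cref{I-ulrich} applied over $R' = R[X]_{\m R[X]}$, the listed conditions for $M'$ over $R'$ are equivalent; each is equivalent to the corresponding condition for $M$ over $R$ by faithfully flat descent and the invariance of length, multiplicity, and blow-up algebras under $R \to R'$."

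The main obstacle is the descent of conditions (1) and (6) — i.e. showing that $IM' \cong M'$ forces $IM \cong M$, and that $M'$ being a $B(IR')$-module forces $M$ to be a $B(I)$-module. For (6) the cleanest phrasing is: $M$ is naturally a $B(I)$-module if and only if the multiplication map $B(I) \otimes_R M \to Q(R) \otimes_R M$ factors through $M$, which is a statement about containment of submodules of $Q(R) \otimes_R M$; tensoring with the faithfully flat $R'$ detects such containments, so $B(I) M \subseteq M$ inside $Q(R)\otimes_R M$ iff $B(IR') M' \subseteq M'$ inside $Q(R')\otimes_{R'} M'$ (using $B(I)\otimes_R R' = B(IR')$ and that $Q(R)\otimes_R R'$ embeds in $Q(R')$). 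For (1), rather than descend an abstract isomorphism, I would route through (2): prove (1)$\Leftrightarrow$(2) and (2)$\Leftrightarrow\cdots\Leftrightarrow$(6) all via the fact that (2) and the $I^n$-Ulrich conditions are manifestly preserved and reflected by $R \to R'$ (lengths and multiplicities are), so the whole equivalence over $R$ follows from the whole equivalence over $R'$. That reduces the "hard part" to the purely formal observation that $e$, $\ell$, and $B(-)$ behave well under the generic extension, which is standard.
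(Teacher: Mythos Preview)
Your approach is essentially the paper's: pass to $R' = R[X]_{\m R[X]}$, invoke \Cref{I-ulrich}, and descend. Two points of comparison and one caution are worth recording.

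First, the paper handles two implications without base change. For $(1)\Rightarrow(2)$ it argues directly: $IM\cong M$ gives $M\cong I^nM$ for all $n$, and $I^nM$ is $I$-Ulrich for $n\gg 0$ by \Cref{ulrichex}. For $(4)\Leftrightarrow(6)$ it observes that $I^n$ already has a principal reduction for $n\gg 0$ (even over a finite residue field) and $B(I)=B(I^n)$, so \Cref{I-ulrich} applied to $I^n$ gives the equivalence directly. This sidesteps the need to verify $B(I)\otimes_R R' = B(IR')$ and to descend the $B(I)$-module structure, which you correctly outline but which is more work.

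Second, your descent of the isomorphism $IM'\cong M'$ to $IM\cong M$ is exactly what the paper does for $(2)\Rightarrow(1)$, citing \cite[Proposition 2.5.8]{ega1964ements}. Note, though, that this is not ``faithfully flat descent'' in the usual sense: an arbitrary faithfully flat local extension need not reflect isomorphism classes of finite modules. It is a special feature of the extension $R\to R[X]_{\m R[X]}$, so the citation (or an equivalent argument) is genuinely needed.

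Finally, your proposed alternative of ``routing through $(2)$'' to avoid descending the isomorphism is circular as written. Knowing $(2)\Leftrightarrow(2)'$ and $(1)'\Leftrightarrow(2)'$ gives $(1)\Rightarrow(2)$ but not $(2)\Rightarrow(1)$; you still need either the descent of isomorphism or a direct argument for $(2)\Rightarrow(1)$ over $R$. Since you already have the descent route, this does not affect correctness, but the alternative should be dropped.
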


\begin{proof}
Assume $(1)$, then $M\cong I^nM$ for $n\gg 0$, so $M$ is $I$-Ulrich by \Cref{ulrichex}. If $(2)$ holds, then we may pass to a local faithfully flat extension of $R$ possessing an infinite residue field and apply \Cref{I-ulrich}, followed by \cite[Proposition 2.5.8]{ega1964ements} to see that $(1)$ holds. The statements $(2)$ to $(5)$ are unaffected by local faithfully flat extensions, so we can enlarge the residue field and apply \Cref{I-ulrich}.  As $I^n$ contains a principal reduction for $n\gg0$ and $B(I)=B(I^n)$ for all $n\geq 1$, \Cref{I-ulrich} implies that $(4)$ and $(6)$ are equivalent.
\end{proof}

\begin{cor}
Let $R$ be as in \Cref{I-UlrichThm}. Let $I$ be a regular ideal. Then $R$ is $I$-Ulrich if and only if $I$ is principal.
\end{cor}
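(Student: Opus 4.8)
The plan is to characterize when the ring $R$ itself is $I$-Ulrich by applying the equivalences already established in \Cref{I-UlrichThm} to the module $M = R$. By that theorem, $R$ is $I$-Ulrich if and only if $IR \cong R$ as $R$-modules, i.e. $I \cong R$. So the statement reduces to showing that $I \cong R$ as $R$-modules if and only if $I$ is principal, where $I$ is a regular ideal (so it contains a nonzerodivisor).

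First I would handle the easy direction: if $I = (a)$ is principal, then since $I$ is regular the generator $a$ is a nonzerodivisor, so multiplication by $a$ gives an isomorphism $R \xrightarrow{\sim} (a) = I$, hence $I \cong R$ and $R$ is $I$-Ulrich. Conversely, suppose $I \cong R$ as $R$-modules. Then $I$ is a cyclic $R$-module, generated by the image of $1$ under an isomorphism $\varphi\colon R \to I$; setting $a = \varphi(1) \in I$ we get $I = Ra = (a)$, so $I$ is principal. (Equivalently: $\mu(I) = \mu(R) = 1$.) The regularity hypothesis is what makes this clean, but even without it, $I \cong R$ forces $I$ to be free of rank one, hence cyclic, hence principal.

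Alternatively, one can bypass invoking $M = R$ in \Cref{I-UlrichThm} directly and argue from \Cref{defIulrich}: $R$ is $I$-Ulrich means $e_I(R) = \ell(R/IR) = \ell(R/I)$. Since for a regular ideal $I$ with principal reduction $x$ one has $e_I(R) = \ell(R/xR)$, and $\ell(R/xR) = \ell(R/I)$ together with $I \supseteq xR$ forces $I = xR$; and the general (possibly finite residue field) case follows by a faithfully flat base change as in the proof of \Cref{I-UlrichThm}. But the slick route is simply to cite condition $(1)$ of \Cref{I-UlrichThm}.

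I do not expect any genuine obstacle here — the content is entirely contained in \Cref{I-UlrichThm}. The only point requiring a word of care is that "$I$-Ulrich" presupposes $R \in \cm(R)$, which is automatic since $R$ is a one-dimensional Cohen-Macaulay local ring, and that "regular ideal" guarantees $I$ contains a nonzerodivisor so that the isomorphism $I \cong R$ really does come from an honest ring element; this is what lets us conclude "principal" rather than merely "principal up to the ambient total ring of fractions."
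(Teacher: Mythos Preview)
Your proposal is correct and follows essentially the same route as the paper: invoke condition~(1) of \Cref{I-UlrichThm} with $M=R$ to get $I\cong R$, and then observe that $I\cong R$ is equivalent to $I$ being principal. The paper's proof is a one-liner that states exactly this, leaving the easy direction and the fact that $I\cong R\iff I$ principal as implicit, whereas you spell out both directions; your alternative length-counting argument via principal reductions is a valid variant but not needed.
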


\begin{proof}
 \Cref{I-UlrichThm} implies that $IR\cong R$, so $I$ is principal.
\end{proof}

\begin{rem}\label{B(I)action}
Note that if $M\in \cm(R)$ is $I$-Ulrich, the proofs of \Cref{I-ulrich} and \Cref{I-UlrichThm} show that the action of $B(I)$ on $M$ extends the action of $R$ on $M$. In other words, there is an action of $B(I)$ on $M$ which when restricted to $R$ yields the original action of $R$ on $M$. In particular, if $M\subseteq Q(R)$, multiplication in $Q(R)$ gives an action of $B(I)$ on $M$. 
\end{rem}

We  say that an extension $f: R \to S$ is {\it birational} if $S\subset Q(R)$. Equivalently $Q(R)=Q(S)$. Also such an $f$ induces a bijection on the sets of minimal primes of $S$ and $R$ and $f_P$ is an isomorphism at all minimal primes $P$ of $R$. Let $\br(R)$ denote the set of finite birational extensions of $R$.
\begin{cor}\label{I-ulrichextension}
Let $R$ be as in \Cref{I-UlrichThm}. Let $R\subseteq S$ be a finite  birational extension of rings. Then $S$ is $I$-Ulrich if and only if $B(I)\subseteq S$.
\end{cor}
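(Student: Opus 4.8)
The plan is to derive this from \Cref{I-UlrichThm} applied to the $R$-module $S$, reading off what ``$S$ is $I$-Ulrich as an $R$-module'' means in terms of the blow-up algebra $B(I)$. First I would observe that $S\in\cm(R)$: since $R\to S$ is a finite birational extension, $S$ is a regular fractional ideal of $R$ (it is a finitely generated $R$-submodule of $Q(R)$ containing $1$, hence containing a nonzerodivisor), and such a module is maximal Cohen-Macaulay because $R$ is one-dimensional Cohen-Macaulay. So \Cref{I-UlrichThm} applies to $M=S$, and in particular the equivalence of (2) and (6) says: $S$ is $I$-Ulrich if and only if $S\in\cm(B(I))$, i.e. $S$ carries a $B(I)$-module structure extending its $R$-module structure.

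Next I would translate ``$S\in\cm(B(I))$'' into the containment $B(I)\subseteq S$. The key point, recorded in \Cref{B(I)action}, is that when $S\subseteq Q(R)$ the $B(I)$-action is the one coming from multiplication in $Q(R)$; concretely, using a principal reduction $x$ of $I^n$ for $n\gg 0$ and the description $B(I)=B(I^n)=R[\tfrac{I^n}{x}]$ from \Cref{B(I)}, one sees that $S$ being $I$-Ulrich is equivalent to $\tfrac{I^n}{x}S\subseteq S$, i.e. to $\tfrac{I^n}{x}\subseteq S:S=S$ (the last equality because $S$ is a ring, so $S:S=S$ in $Q(R)$). Hence $S$ is $I$-Ulrich iff $R[\tfrac{I^n}{x}]=B(I)\subseteq S$. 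Conversely, if $B(I)\subseteq S$ then $S$ is an $B(I)$-module (it is a ring containing $B(I)$), and it is maximal Cohen-Macaulay over $B(I)$ since $B(I)$ is again a one-dimensional Cohen-Macaulay ring finite and birational over $R$ and $S$ is a finite birational extension of it; so $S\in\cm(B(I))$ and we are back to condition (6) of \Cref{I-UlrichThm}.

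I expect the only mild subtlety — the ``hard part,'' such as it is — to be making the passage to an infinite residue field cleanly, since \Cref{I-UlrichThm} already absorbed that issue but the identification $B(I)=R[\tfrac{I}{x}]$ needs a principal reduction. This is handled exactly as in the proof of \Cref{I-UlrichThm}: pass to a faithfully flat local extension $R\to R'$ with infinite residue field, note that $B(I)\otimes_R R'=B(IR')$, that $S\otimes_R R'$ is again a finite birational extension of $R'$, and that both the $I$-Ulrich condition and the containment $B(I)\subseteq S$ descend from $R'$ to $R$ by faithful flatness. With that in place the argument is just the chain: $S$ is $I$-Ulrich $\iff$ (by \Cref{I-UlrichThm}, (2)$\Leftrightarrow$(6)) $S\in\cm(B(I))$ $\iff$ $B(I)\subseteq S$, and I would write it in that order.

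\begin{proof}
Since $R\to S$ is finite and birational, $S$ is a finitely generated $R$-submodule of $Q(R)$ containing $1$, hence a regular fractional ideal of $R$; as $R$ is a one-dimensional Cohen-Macaulay local ring, $S\in\cm(R)$ and \Cref{I-UlrichThm} applies with $M=S$. By the equivalence of conditions $(2)$ and $(6)$ there, $S$ is $I$-Ulrich if and only if $S\in\cm(B(I))$.

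Suppose $S$ is $I$-Ulrich. We may pass to a faithfully flat local extension with infinite residue field (both the $I$-Ulrich property and the containment $B(I)\subseteq S$ are unaffected by such an extension, since $B(I)$ and $S$ base change correctly and we may descend by faithful flatness), so assume the residue field is infinite and pick a principal reduction $x$ of $I^n$ for some $n\gg 0$. By \Cref{B(I)} we have $B(I)=B(I^n)=R[\tfrac{I^n}{x}]$, and by \Cref{B(I)action} the $B(I)$-module structure on $S$ is induced by multiplication in $Q(R)$. Thus $\tfrac{I^n}{x}S\subseteq S$, i.e. $\tfrac{I^n}{x}\subseteq S:S=S$ (the last equality because $S$ is a ring), whence $B(I)=R[\tfrac{I^n}{x}]\subseteq S$.

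Conversely, suppose $B(I)\subseteq S$. Then $B(I)$ is a one-dimensional Cohen-Macaulay ring, finite and birational over $R$, and $S$ is a finite birational extension of $B(I)$; in particular $S$ is a maximal Cohen-Macaulay $B(I)$-module, so $S\in\cm(B(I))$. By the equivalence of $(2)$ and $(6)$ in \Cref{I-UlrichThm}, $S$ is $I$-Ulrich.
\end{proof}
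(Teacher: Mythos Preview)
Your proof is correct and follows essentially the same approach as the paper, which simply cites \Cref{I-UlrichThm} and \Cref{B(I)action}. Your detour through an infinite residue field and the explicit description $B(I)=R[\tfrac{I^n}{x}]$ is unnecessary, since \Cref{B(I)action} already tells you the $B(I)$-action on $S\subseteq Q(R)$ is multiplication in $Q(R)$, so $B(I)=B(I)\cdot 1\subseteq S$ directly; but nothing you wrote is wrong.
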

\begin{proof}
Follows immediately from \Cref{I-UlrichThm} and \Cref{B(I)action}.
\end{proof}

\begin{cor}\label{condref}
Let $R$ be as in \Cref{I-UlrichThm}. Let $I$ be a regular ideal. If $\bar R$ is a finitely generated $R$-module then $\bar R$ and the conductor $\cond$ are $I$-Ulrich.
\end{cor}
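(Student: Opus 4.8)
The plan is to exhibit both $\bar R$ and the conductor $\cond$ as modules over the blow-up algebra $B(I)$ and then quote the characterizations already in hand.

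First I would check that $B(I)\subseteq\bar R$. For any $n\ge 0$ and any $a\in I^{n}:I^{n}$, the element $a$ carries the finitely generated faithful $R$-submodule $I^{n}\subseteq Q(R)$ into itself (faithfulness because $I$, hence $I^{n}$, is regular), so the determinant trick forces $a$ to satisfy a monic polynomial over $R$, whence $a\in\bar R$. Taking the union over all $n$ gives $B(I)=\bigcup_{n\ge 0}(I^{n}:I^{n})\subseteq\bar R$. Since $\bar R$ is module finite over $R$ by hypothesis, it is a finite birational extension of $R$, so \Cref{I-ulrichextension} applied with $S=\bar R$ immediately gives that $\bar R$ is $I$-Ulrich.

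Next I would handle the conductor. Choosing a common denominator $s$ (a nonzerodivisor of $R$) for finitely many $R$-module generators of $\bar R$ yields $s\bar R\subseteq R$, so $s\in\cond$; in particular $\cond$ is a regular ideal and therefore $\cond\in\cm(R)$. By its very definition $\cond=R:\bar R$ is an ideal of $\bar R$, i.e.\ $\bar R\,\cond=\cond$; combined with $B(I)\subseteq\bar R$ this gives $B(I)\,\cond\subseteq\bar R\,\cond=\cond$, so $\cond$ is a module over $B(I)$, that is $\cond\in\cm(B(I))$. Now the implication $(6)\Rightarrow(2)$ of \Cref{I-UlrichThm} shows that $\cond$ is $I$-Ulrich.

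There is no genuine obstacle beyond bookkeeping: the one substantive point is the inclusion $B(I)\subseteq\bar R$, after which \Cref{I-ulrichextension} disposes of $\bar R$ and the $B(I)$-module structure of $\cond$ (together with the regularity of $\cond$, so that it lies in $\cm(R)$) disposes of the conductor via \Cref{I-UlrichThm}. One only needs to remember that ``module finite'' is precisely what makes $\bar R$ a \emph{finite} birational extension, so that \Cref{I-ulrichextension} applies.
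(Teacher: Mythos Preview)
Your proof is correct and follows essentially the same route as the paper: establish $B(I)\subseteq\bar R$, conclude $\bar R\in\cm(B(I))$ is $I$-Ulrich via \Cref{I-UlrichThm} (the paper cites this directly rather than through \Cref{I-ulrichextension}, but that is immaterial), and then observe $\cond\in\cm(\bar R)\subseteq\cm(B(I))$ so \Cref{I-UlrichThm} applies again. You supply more justification for $B(I)\subseteq\bar R$ and for the regularity of $\cond$ than the paper does, but the architecture is identical.
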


\begin{proof}
As $B(I)\subseteq \bar R$, $\bar R \in \cm(B(I))$ and so by \Cref{I-UlrichThm}, $\bar R$ is $I$-Ulrich. Since $\cond\in \cm(\bar R)\subseteq \cm(B(I))$, the conclusion follows. 
\end{proof}

\begin{cor}\label{refIUl1}
Let $R$ be as in \Cref{I-UlrichThm}. If $M \in \cm(R)$ is $I$-Ulrich, then $\tr M\subseteq b(I)$. If $M\in \rf(R)$, then the converse holds. 
\end{cor}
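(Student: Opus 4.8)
The statement has two halves, and the natural strategy is to connect $I$-Ulrichness of $M$ to membership of $M$ in $\cm(B(I))$ via \Cref{I-UlrichThm}, and then to use \Cref{faber} to translate module-over-$B(I)$ into a trace-ideal containment. Concretely, for the forward direction I would argue as follows. Suppose $M \in \cm(R)$ is $I$-Ulrich. By \Cref{I-UlrichThm} (equivalence of (2) and (6)), $M \in \cm(B(I))$, i.e. $M$ is a module over the finite birational extension $S := B(I)$. Then \Cref{faber}, implication $(1)\Rightarrow(2)$, gives $\tr_R(M) \subseteq \cond_R(B(I)) = b(I)$. This requires knowing $B(I)$ is module-finite over $R$; this holds once $\overline{R}$ is module-finite over $R$, but in fact $B(I) \subseteq I^n : I^n \subseteq \overline R$ and for a regular ideal $B(I)$ is always module-finite over $R$ in the one-dimensional Cohen–Macaulay setting (after enlarging the residue field one has $B(I) = R[I/x]$, a finitely generated $R$-algebra sitting inside $\overline R$; finiteness of $B(I)$ over $R$ is standard, e.g. via the reduction equation $I^{n+1} = xI^n$). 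I would insert a one-line justification of this.

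For the converse, assume $M \in \rf(R)$ and $\tr_R(M) \subseteq b(I) = \cond_R(B(I))$. Apply \Cref{faber} again, now using the converse implication $(2)\Rightarrow(1)$, which is exactly where reflexivity of $M$ is needed: we conclude that $M$ is a module over $S = B(I)$, hence $M \in \cm(B(I))$ (note $M$ remains maximal Cohen–Macaulay since $R \to B(I)$ is finite birational and $M \in \cm(R)$). Then \Cref{I-UlrichThm}, equivalence of (6) and (2), yields that $M$ is $I$-Ulrich.

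I expect the only genuine subtlety to be the module-finiteness of $B(I)$ over $R$, needed so that \Cref{faber} applies — the excerpt's blanket hypotheses for that section do not include finiteness of $\overline R$, so I would either add that hypothesis (matching \Cref{condref}, which does assume $\overline R$ finite) or supply the short argument that $B(I)$ is module-finite for a regular ideal $I$ with a principal reduction. Everything else is a direct citation of \Cref{I-UlrichThm} and \Cref{faber}, so the proof should be only a few lines. A clean write-up:

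\begin{proof}
Set $S = B(I)$; this is a finite birational extension of $R$ with $\cond_R(S) = b(I)$. Suppose $M$ is $I$-Ulrich. By \Cref{I-UlrichThm}, $M \in \cm(S)$, so in particular $M$ is an $S$-module, and \Cref{faber} gives $\tr_R(M) \subseteq \cond_R(S) = b(I)$. Conversely, assume $M \in \rf(R)$ and $\tr_R(M) \subseteq b(I) = \cond_R(S)$. By \Cref{faber}, $M$ is an $S$-module; as $M \in \cm(R)$ and $R \to S$ is finite birational, $M \in \cm(S)$. Then \Cref{I-UlrichThm} shows $M$ is $I$-Ulrich.
\end{proof}
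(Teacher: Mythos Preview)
Your approach is exactly the paper's: the proof there is the single sentence ``This follows from \Cref{I-UlrichThm} and \Cref{faber}.'' Your concern about module-finiteness of $B(I)$ is well-placed; the paper establishes this separately as \Cref{B(I)finite} (via $I^{n+1}\cong I^n$ for $n\gg 0$, so $\End_R(I^n)$ stabilizes), though it does not explicitly invoke it at this point.
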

\begin{proof}
This follows from \Cref{I-UlrichThm} and \Cref{faber}.
\end{proof}

\begin{lem}\label{Uexact}
Let $R$ be as in \Cref{I-UlrichThm}. Let $\ses{A}{B}{C}$ be an exact sequence in $\cm(R)$. If $B$ is $I$-Ulrich then so are $A,C$.
\end{lem}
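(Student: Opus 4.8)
The plan is to reduce to the situation in which $I$ has a principal reduction and then read off both conclusions from the equivalences in \Cref{I-ulrich}. Exactly as in the proof of \Cref{I-UlrichThm}, passing to a faithfully flat local extension of $R$ with infinite residue field preserves exactness of $\ses{A}{B}{C}$, the maximal Cohen--Macaulay property of $A,B,C$, and the $I$-Ulrich condition (in both directions), so I may assume $I$ admits a principal reduction $x$. Since $I$ is regular and $x$ is a reduction of it, $x$ is a non-zerodivisor on $R$, hence a parameter, hence a non-zerodivisor on every maximal Cohen--Macaulay $R$-module. By \Cref{I-ulrich}, saying that $B$ is $I$-Ulrich is the same as saying $IB=xB$.

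First I would handle $C$. Applying $I$ to the surjection $B\twoheadrightarrow C$ and using $IB=xB$ gives $IC=(IB+A)/A=(xB+A)/A=xC$; in particular $IC\subseteq xC$, and since $C\in\cm(R)$, \Cref{I-ulrich} shows that $C$ is $I$-Ulrich.

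Next I would handle $A$, which is the crux. I claim $IA\subseteq xA$. Since $IA\subseteq A$ and $IA\subseteq IB=xB$, we have $IA\subseteq A\cap xB$, so it suffices to show $A\cap xB\subseteq xA$. Given $a\in A$ with $a=xb$ for some $b\in B$, the image of $b$ in $C=B/A$ is killed by $x$; as $x$ is a non-zerodivisor on $C$, this forces $b\in A$, whence $a=xb\in xA$. Thus $IA\subseteq xA$, and \Cref{I-ulrich} gives that $A$ is $I$-Ulrich. Finally, the $I$-Ulrich conclusions for $A$ and $C$ descend along the faithfully flat extension to the original ring.

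The only step requiring genuine input is the inclusion $A\cap xB\subseteq xA$, i.e. that $A$ is saturated in $B$ with respect to $x$; this is precisely where the hypothesis $C\in\cm(R)$ is used, through the regularity of the parameter $x$ on $C$. Everything else is formal manipulation of the characterizations already established in \Cref{I-ulrich} together with the standard reduction to an infinite residue field.
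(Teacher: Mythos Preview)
Your proof is correct and follows essentially the same approach as the paper. The only cosmetic difference is in packaging: the paper observes that since $x$ is regular on the MCM module $C$, one has an exact sequence $\ses{A/xA}{B/xB}{C/xC}$, and then the vanishing of $I$ on the middle term forces vanishing on the outer two; your element-level verification that $A\cap xB\subseteq xA$ is precisely the injectivity of $A/xA\to B/xB$ in this sequence, so the two arguments are the same under the hood.
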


\begin{proof}
We may enlarge the residue field if necessary and assume that $I$ has a principal reduction $x$. Then $x$ is a regular element and hence induces an exact sequence \[\ses{A/xA}{B/xB}{C/xC}.\] $B$ is $I$-Ulrich if and only if $I$ kills the middle module, but if that's the case then $I$ kills the other two as well. 
\end{proof}

\begin{cor}\label{ulrichunderimage}
Let $R$ be as in \Cref{I-UlrichThm}. Let $\ds M\in \ulrichcat_I(R)$. For any nonzero $\ds f\in M^*, \im(f)\in \ulrichcat_I(R)$.
\end{cor}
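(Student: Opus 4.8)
The plan is to realize $\im(f)$ as a quotient of $M$ and then invoke \Cref{Uexact}. Since $I$-Ulrichness is an isomorphism invariant we may assume $f\neq 0$; writing $K=\ker(f)$, we have the short exact sequence
\[
\ses{K}{M}{\im(f)}
\]
in which the surjection is $f$ with its codomain restricted to its image. If $K=0$ then $M\cong\im(f)$ and we are finished, so we may also assume $K\neq 0$.

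The one thing that needs checking is that this sequence lies entirely in $\cm(R)$; once that is done, \Cref{Uexact} applied with middle term the $I$-Ulrich module $M$ shows immediately that $K$ and $\im(f)$ are $I$-Ulrich, which is exactly the claim. Both checks come down to a single remark: over a one-dimensional Cohen--Macaulay local ring a nonzero torsion-free module has depth $1$, hence is maximal Cohen--Macaulay (its associated primes lie in $\ass(R)=\Min(R)$, so $\m\notin\ass$, while the depth is at most $\dim R=1$). Applying this, $K$ is a submodule of the torsion-free module $M\in\cm(R)$, so $K\in\cm(R)$; and $\im(f)$ is a nonzero ideal of $R$, hence a nonzero submodule of the torsion-free module $R$, so $\im(f)\in\cm(R)$ as well. (Alternatively, the depth lemma applied to $\ses{\im(f)}{R}{R/\im(f)}$ gives $\depth\im(f)\ge 1$.)

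I do not anticipate any genuine obstacle here: the entire content is carried by \Cref{Uexact}, and what remains is the routine verification above. One small point worth flagging is that $\im(f)$ need not have full support when $R$ fails to be a domain, but this is harmless, since $I$-Ulrichness in the sense of \Cref{defIulrich} is defined for every object of $\cm(R)$ and \Cref{Uexact} imposes no support hypothesis.
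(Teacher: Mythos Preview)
Your proof is correct and is exactly the argument the paper intends: the corollary is stated without proof immediately after \Cref{Uexact}, and your verification that the sequence $\ses{K}{M}{\im(f)}$ lies in $\cm(R)$ is precisely the routine check being left to the reader. The only superfluous line is ``we may assume $f\neq 0$'', since that is already part of the hypothesis.
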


\begin{cor}\label{ulrichlattice}
Let $R$ be as in \Cref{I-UlrichThm}. Suppose $J,L\in \ulrichcat_I(R)$ are $R$-submodules of an $R$-module $M$. Then $\ds J+L, J\cap L\in \ulrichcat_I(R)$. 
\end{cor}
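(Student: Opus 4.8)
The plan is to package all four modules into one short exact sequence and then invoke the exactness property of $I$-Ulrich modules, \Cref{Uexact}. Explicitly, I would work with
\[
\ses{J\cap L}{J\oplus L}{J+L},
\]
the maps being $w\mapsto(w,w)$ and $(a,b)\mapsto a-b$.

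First I would pass to a faithfully flat local extension of $R$ with infinite residue field so that $I$ acquires a principal reduction $x$; exactly as in the proof of \Cref{I-UlrichThm}, this affects neither membership in $\cm(R)$, nor the $I$-Ulrich condition, nor the formation of $J+L$ and $J\cap L$ inside $M$. Then \Cref{I-ulrich} gives $IJ=xJ$ and $IL=xL$, whence $I(J\oplus L)=xJ\oplus xL=x(J\oplus L)$; since $J\oplus L\in\cm(R)$, the same proposition shows $J\oplus L$ is $I$-Ulrich.

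Next I would verify that all three terms of the sequence lie in $\cm(R)$, so that \Cref{Uexact} applies and forces both $J\cap L$ and $J+L$ to be $I$-Ulrich, which is the assertion. The term $J\oplus L$ is maximal Cohen-Macaulay as just noted, and $J\cap L$, being a submodule of the maximal Cohen-Macaulay module $J$, is torsion-free and hence maximal Cohen-Macaulay (as $R$ is one-dimensional Cohen-Macaulay). The delicate point, which I expect to be the main obstacle, is the Cohen-Macaulayness of $J+L$: a priori it is merely a finitely generated $R$-module, and a sum of two torsion-free submodules of a general ambient module need not be torsion-free. In the situations where the corollary is used, however, the ambient module $M$ is torsion-free — typically $M=R$, so that $J$ and $L$ are ideals, or $M$ a fractional ideal, or a birational extension of $R$ — and then $J+L\subseteq M$ is torsion-free, hence lies in $\cm(R)$, and \Cref{Uexact} closes the argument. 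So the substantive content is the identity $I(J\oplus L)=x(J\oplus L)$, combined with the (in practice harmless) observation that one works inside a torsion-free ambient module.
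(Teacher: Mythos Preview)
Your argument is exactly the paper's: the one-line proof there simply cites the short exact sequence $\ses{J\cap L}{J\oplus L}{J+L}$ and (implicitly) \Cref{Uexact}. Your expanded version is faithful to this, and the reduction to infinite residue field is harmless but redundant, since \Cref{Uexact} already performs that reduction internally; you could simply observe that $J\oplus L\in\ulrichcat_I(R)$ (e.g.\ because $\ulrichcat_I(R)=\cm(B(I))$ is closed under direct sums) and invoke \Cref{Uexact} directly.

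More importantly, you have spotted something the paper glosses over. \Cref{Uexact} requires the sequence to live in $\cm(R)$, and while $J\cap L$ is automatically torsion-free (as a submodule of $J$), $J+L$ need not be when the ambient $M$ is arbitrary. Your instinct is correct: take $R$ one-dimensional Cohen--Macaulay, $r\in R$ regular, $M=R\oplus R/(r)$, $J=R\cdot(1,0)$, $L=R\cdot(1,\bar 1)$; then $J\cong L\cong R$ are $I$-Ulrich for $I$ principal, yet $J+L=M\notin\cm(R)$. So as literally stated the corollary is not quite right, and your diagnosis---that one should take $M$ torsion-free---is the correct fix. The paper's applications (\Cref{traceisulrich}, \Cref{blattice}) all have $M=R$, so nothing downstream is affected, exactly as you say.
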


\begin{proof}
The assertion follows from the short exact sequence $\ses{J\cap L}{J\oplus L}{J+L}$.
\end{proof}{}

\begin{lem}\label{ulrichunderhom}
Let $R$ be as in \Cref{I-UlrichThm}. If $\ds M\in \ulrichcat_I(R)$, then $\ds \Hom_R(M,N)\in \ulrichcat_I(R)$ for any module $N\in \cm(R)$. 
\end{lem}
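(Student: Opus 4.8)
The plan is to leverage the characterization from \Cref{I-UlrichThm} that a module $X\in\cm(R)$ is $I$-Ulrich if and only if $X\in\cm(B(I))$, together with \Cref{B(I)action}, which already tells us that $M$ itself carries a $B(I)$-action extending its $R$-action. So the task reduces to transporting this $B(I)$-structure to $\Hom_R(M,N)$ and checking maximal Cohen-Macaulayness.

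First I would record that since $M$ is $I$-Ulrich, \Cref{B(I)action} gives $M\in\cm(B(I))$, with the $B(I)$-action restricting to the original $R$-action. Next I would equip $H:=\Hom_R(M,N)$ with a $B(I)$-module structure via the rule $(b\cdot f)(m):=f(bm)$ for $b\in B(I)$, $f\in H$, $m\in M$. This is well defined because $bm\in M$; it is $R$-linear in $m$; since $B(I)$ is commutative it is associative and unital; and for $b=r\in R$ it recovers the usual action $(rf)(m)=rf(m)=f(rm)$, so the $B(I)$-structure on $H$ restricts to the standard $R$-structure.

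It remains to see that $H\in\cm(B(I))$. If $H=0$ the conclusion is trivial, so assume $H\neq0$. Choosing a finite free presentation $R^a\to R^b\to M\to 0$ and applying $\Hom_R(-,N)$ realizes $H$ as an $R$-submodule of $N^b$; since $N\in\cm(R)$ we have $\depth_R N=1$, hence $\m\notin\ass_R N^b$, hence $\m\notin\ass_R H$, so $\depth_R H\geq1$. As $\dim R=1$ and $H$ is finitely generated and nonzero, $H\in\cm(R)$. Because $B(I)$ is module finite over $R$, $H$ is finitely generated over $B(I)$ and its depth at each maximal ideal of $B(I)$ agrees with $\depth_R H\geq1$, while $\dim B(I)=1$; therefore $H\in\cm(B(I))$. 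Applying \Cref{I-UlrichThm} over $R$ then yields that $H$ is $I$-Ulrich.

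I do not expect a serious obstacle here; the essential point is simply that $\Hom_R(M,-)$ inherits the $B(I)$-action from the source $M$, and then matching up ``MCM over $R$'' with ``MCM over $B(I)$''. The only step needing a little care is verifying $\depth_R H\geq1$ (equivalently $H\in\cm(R)$), which is handled by viewing $H$ as a submodule of $N^b$. (Alternatively one could argue more hands-on: after enlarging the residue field so that $I$ has a principal reduction $x$, one checks that for $a\in I$ and $f\in H$ the map $m\mapsto f(\tfrac{a}{x}m)$ lies in $H$ and is sent to $af$ by multiplication by $x$, giving $IH\subseteq xH$ and hence $IH=xH$, then invokes \Cref{I-ulrich}; but this still requires $H\in\cm(R)$, so the $B(I)$-route is cleaner.)
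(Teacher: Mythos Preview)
Your proof is correct. The paper takes a somewhat more direct route: after enlarging the residue field so that $I$ has a principal reduction $x$, it uses the natural embedding
\[
\Hom_R(M,N)\otimes_R R/xR \hookrightarrow \Hom_R(M/xM,\,N/xN),
\]
observes that $I$ annihilates the target because $IM=xM$, and concludes $I\cdot\Hom_R(M,N)\subseteq x\cdot\Hom_R(M,N)$, invoking \Cref{I-ulrich}. Your main argument repackages the same idea through the $B(I)$-characterization: rather than working modulo $x$, you transport the $B(I)$-action from $M$ to $\Hom_R(M,N)$ via the source and then quote \Cref{I-UlrichThm}. Both approaches rest on exactly the same observation---that the Ulrich structure on the \emph{first} argument of $\Hom$ propagates to the Hom module---so the difference is largely one of framing. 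The paper's version is a couple of lines shorter and avoids checking that $\Hom_R(M,N)\in\cm(R)$ separately (that is implicit in its argument, since $x$ is automatically regular on a submodule of $N^b$); your version is slightly more conceptual and makes the role of $B(I)$ explicit. Your parenthetical ``hands-on'' alternative at the end is in fact precisely the paper's argument.
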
{}

\begin{proof}
As above, we can assume there is  a principal reduction $x$ of $I$. Note that there is an embedding $$\ds \Hom_R(M,N)\otimes_R R/xR \to \ds \Hom_R(M/xM,N/xN)$$ and the latter is killed by $I$ since $\ds M\in \ulrichcat_I(R)$. This shows that $\ds \Hom_R(M,N)\otimes_R R/xR$ is killed by $I$ and this finishes the proof. 
\end{proof}{}

\begin{prop}\label{traceisulrich}
Let $R$ be as in \Cref{I-UlrichThm}. If $\ds M\in \ulrichcat_I{R}$, then $\tr(M)\in \ulrichcat_I(R)$.
\end{prop}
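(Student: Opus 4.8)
The plan is to reduce $\tr(M)$ to the image of a single map out of $M$ and then invoke the closure of $I$-Ulrich modules under such images. Recall from \Cref{deftrace} that $\tr(M)$ is the image of $\tau_M : M^* \otimes_R M \to R$, so $\tr(M) = \sum_{f \in M^*} \im(f)$. By \Cref{ulrichunderimage}, each $\im(f)$ for nonzero $f \in M^*$ is $I$-Ulrich; moreover $\tr(M)$ is a finite sum of such images since $M^*$ is finitely generated. Thus it suffices to show that a finite sum of $I$-Ulrich submodules of a common module (here, submodules of $R$) is again $I$-Ulrich. This is exactly what \Cref{ulrichlattice} gives for the sum of two submodules, and the general finite case follows by an obvious induction on the number of summands.

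The one point that needs a small amount of care is the degenerate case where $\tr(M)$ is not a regular ideal, or more precisely whether $\tr(M) \in \cm(R)$ so that $I$-Ulrichness is even defined for it. If $M \in \ulrichcat_I(R)$ then $M \in \cm(R)$, and since $I$ is regular, \Cref{I-UlrichThm}(1) gives $IM \cong M$, forcing $M$ to be locally free of the appropriate rank at each minimal prime; by \Cref{tracereg} this makes $\tr(M)$ a regular ideal, hence maximal Cohen-Macaulay. So the statement is non-vacuous and the reduction above applies. (Alternatively one can bypass this by working with $\tr(M)$ directly through \Cref{refIUl1}: if $M$ is $I$-Ulrich then $\tr M \subseteq b(I)$, and since $\tr M$ is reflexive by \Cref{goto2020}, the converse direction of \Cref{refIUl1} gives $\tr M \in \ulrichcat_I(R)$. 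Either route works; I would present the short one via \Cref{ulrichunderimage} and \Cref{ulrichlattice}.)

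So the proof reads: write $\tr(M) = \im(f_1) + \cdots + \im(f_n)$ with $f_i \in M^*$ a generating set; discard the zero maps; each $\im(f_i)$ is $I$-Ulrich by \Cref{ulrichunderimage}; all $\im(f_i)$ are submodules of $R$; apply \Cref{ulrichlattice} inductively to conclude $\tr(M) = \sum_i \im(f_i) \in \ulrichcat_I(R)$. I do not anticipate any real obstacle — the only thing to watch is that $\tr(M)$ lies in $\cm(R)$, which as noted follows from regularity of $I$ and \Cref{tracereg}, and that one is genuinely allowed to reduce an infinite sum to a finite one, which is immediate from finite generation of $M^*$.
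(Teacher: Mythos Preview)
Your main argument is exactly the paper's: write $\tr(M)$ as a (finite) sum of images $\im(f_i)$, invoke \Cref{ulrichunderimage} for each summand, and apply \Cref{ulrichlattice} inductively. That is correct and matches the paper verbatim.

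Two of your side remarks, however, are wrong and should be removed. First, the claim that $IM\cong M$ ``forces $M$ to be locally free of the appropriate rank at each minimal prime'' is not justified: at a minimal prime $P$ the ideal $I$ is regular, so $I_P=R_P$, and the isomorphism $I_P M_P\cong M_P$ is vacuous. So your appeal to \Cref{tracereg} to get regularity of $\tr(M)$ does not go through as written. (The paper does not address this point explicitly either; the statement of \Cref{ulrichunderimage} already asserts $\im(f)\in\ulrichcat_I(R)$, so the CM-ness of each image is implicitly taken care of there, and then $\tr(M)$ is a sum of regular ideals, hence regular.) Second, your alternative route via \Cref{refIUl1} claims ``$\tr M$ is reflexive by \Cref{goto2020}'', but \Cref{goto2020} concerns conductors of birational extensions, not arbitrary trace ideals; indeed the paper later gives an explicit example of a trace ideal that is not reflexive. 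So the converse direction of \Cref{refIUl1} (which needs reflexivity) is not available, and this alternative argument fails.
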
{}

\begin{proof}
Since, $\tr(M)$ is the sum of all images of elements in $\ds M^*$, the proof follows immediately from \Cref{ulrichunderimage} and \Cref{ulrichlattice}.
\end{proof}{}

\begin{cor}\label{blattice}
Let $R$ be as in \Cref{I-UlrichThm}. The set of $I$-Ulrich ideals is a lattice under addition and intersection. The largest element is $b(I)$.     
\end{cor}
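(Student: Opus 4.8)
The plan is to package together the closure properties already established in this section. First I would verify that the set of $I$-Ulrich ideals is closed under the two lattice operations. Closure under addition and intersection for ideals that are submodules of a common module follows directly from \Cref{ulrichlattice} (applied with $M = R$, since any two ideals of $R$ are submodules of $R$); this simultaneously handles both the join and the meet. So the only structural content left is to identify the top element. Since every $I$-Ulrich ideal is, in particular, an $I$-Ulrich module in $\cm(R)$, \Cref{refIUl1} gives that every $I$-Ulrich ideal $J$ satisfies $\tr(J) = J \subseteq b(I)$ (using $J = \tr(J)$ when $J$ is a trace ideal, but in fact $J \subseteq \tr(J) \subseteq b(I)$ works for any $I$-Ulrich ideal since $\tr(J) \supseteq J$ for an ideal $J$). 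Wait — more carefully: for an ideal $J \subseteq R$ we always have $J = \tr(J)$ only when $J$ is a trace ideal, but $\tr(J) \supseteq J$ is not automatic either; what is true is that if $J$ is $I$-Ulrich then $\tr(J)$ is $I$-Ulrich by \Cref{traceisulrich}, and $J \subseteq \tr(J)$ holds whenever $J$ contains a nonzerodivisor (by \Cref{traceproperties}(1), $\tr(J) = (R:J)J \supseteq J$ since $1 \in R:J$... no: $1 \in R:J$ iff $J \subseteq R$, which holds). So indeed $J \subseteq \tr(J) \subseteq b(I)$ via \Cref{refIUl1} applied to the module $\tr(J)$, or more simply: any $I$-Ulrich module $M$ satisfies $\tr(M) \subseteq b(I)$, and for an ideal $J$ we have $J \subseteq \tr(J)$, hence $J \subseteq b(I)$.

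Next I would check that $b(I)$ is itself an $I$-Ulrich ideal, which certifies it as the largest element. By \Cref{condref}-type reasoning — or directly — $B(I)$ is $I$-Ulrich as an $R$-module: indeed $B(I) = \bigcup_n (I^n : I^n)$ and after enlarging the residue field $B(I) = R[\tfrac{I}{x}]$ for a principal reduction $x$, so $I \cdot B(I) = x \cdot B(I)$, giving condition (2) of \Cref{I-ulrich}. Alternatively, \Cref{I-ulrichextension} applies with $S = B(I)$ since trivially $B(I) \subseteq B(I)$. Hence $B(I) \in \ulrichcat_I(R)$, and then $b(I) = \cond_R(B(I)) = \tr(B(I))$ (this last equality is the computation $\tr(S) = S^*S = \cond_R(S)S = \cond_R(S)$ from the proof of \Cref{goto2020}) is $I$-Ulrich by \Cref{traceisulrich}. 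Since $b(I)$ is $I$-Ulrich and every $I$-Ulrich ideal is contained in it, $b(I)$ is the maximum element of the lattice, and in particular the lattice is nonempty.

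I do not anticipate a genuine obstacle here — the proposition is a corollary assembling \Cref{ulrichlattice}, \Cref{traceisulrich}, \Cref{refIUl1}, and \Cref{I-ulrich}/\Cref{I-ulrichextension}. The one point requiring a little care is the direction of the containment $J \subseteq \tr(J)$ for ideals (as opposed to fractional ideals not contained in $R$), but this is immediate since $1 \in R : J$ when $J \subseteq R$, so $\tr(J) = (R:J)J \supseteq J$ by \Cref{traceproperties}(1), provided $J$ is regular; if $J$ is not regular the statement $\tr(J) \subseteq b(I)$ should be read with the understanding that $I$-Ulrich ideals of interest are regular, or one notes $J \subseteq \tr(J)$ still holds as $\tau_J$ restricted to evaluation at inclusions $J \hookrightarrow R$ already hits all of $J$. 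The other routine point is confirming $b(I)$ is regular — it contains $\cond$ when $\bar R$ is module-finite, and in general it is regular because $B(I) \subseteq \bar R$ is module-finite over $R$ in the relevant cases; in any event $b(I)$ being an $I$-Ulrich trace ideal places it in the lattice. So the proof is: lattice structure from \Cref{ulrichlattice}; top element $= b(I)$ from \Cref{refIUl1} (upper bound) together with $b(I) \in \ulrichcat_I(R)$ via \Cref{I-ulrichextension} and \Cref{traceisulrich} (attainment).
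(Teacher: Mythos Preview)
Your proposal is correct and follows essentially the same route as the paper: the lattice structure comes from \Cref{ulrichlattice}, the upper bound from \Cref{refIUl1} together with the containment $J\subseteq \tr(J)$ for a regular ideal $J$, and the attainment from $b(I)\in\ulrichcat_I(R)$. The only cosmetic difference is that the paper shows $b(I)$ is $I$-Ulrich directly by observing that $b(I)$ is a $B(I)$-module and invoking \Cref{I-UlrichThm}(6), whereas you pass through $B(I)\in\ulrichcat_I(R)$ and then apply \Cref{traceisulrich} to $b(I)=\tr(B(I))$; both arguments are one line and use the same ingredients.
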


\begin{proof}
That this set forms a lattice follows from \Cref{ulrichlattice}. For the last assertion,  first note that $b(I)$ is a module over $B(I)$, and then apply \Cref{I-UlrichThm}, \Cref{traceisulrich} and \Cref{refIUl1}.
\end{proof}

\begin{rem}
Aberbach and Huneke \cite{aberbach1996theorem} defined the \textit{coefficient ideal} of $I$ relative to a principal reduction $x$ as the largest ideal $J$ such that $xJ=IJ$. It follows that the coefficient ideal is just $b(I)$.
\end{rem}

From now on we assume that $I$ contains a principal reduction. Recall that the core of $I$, denoted $\core(I)$, is defined as  the intersection of all (minimal) reductions of $I$.

\begin{prop}\label{ulrichinterval}
Let $R$ be as in \Cref{I-UlrichThm}. Assume that $I$ has a principal reduction $x$. Consider  $M\in \ds \ulrichcat_I(R)$. We have, $$ \tr(M) \subseteq (x):_RI \subseteq \tr(I).$$
If the residue field of $R$ is infinite, then:
$$ \tr(M)\subseteq \core(I):_RI \subseteq (x):_RI \subseteq \tr(I).$$
 \end{prop}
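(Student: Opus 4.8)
The plan is to verify the chain of inclusions from the inside out, using the $I$-Ulrich characterizations established in \Cref{I-UlrichThm} together with the trace formula \Cref{traceformula}. First I would handle the version without the infinite residue field hypothesis. The right-hand inclusion $(x):_R I \subseteq \tr(I)$ is already \Cref{traceformula} (``in particular'' clause), so nothing is needed there. For the left-hand inclusion $\tr(M) \subseteq (x):_R I$: since $M$ is $I$-Ulrich, \Cref{I-ulrich} gives $IM = xM$, so $\frac{I}{x} M \subseteq M$, i.e. $M$ is a module over $B(I) = R[\frac{I}{x}]$ by \Cref{B(I)}. By \Cref{traceisulrich}, $\tr(M)$ is an $I$-Ulrich ideal, hence $\tr(M) = \frac{I}{x}\tr(M)$ as submodules of $Q(R)$, which means $x \cdot \tr(M) \supseteq I \cdot \tr(M)$, equivalently $\tr(M) \subseteq x :_R (I\tr(M)) $; but more directly, pick any $t \in \tr(M)$, then $It \subseteq xt \cdot (\text{something})$... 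The cleaner route: $\tr(M)$ is an ideal of $R$ with $I\,\tr(M) = x\,\tr(M)$, so for any $t\in\tr(M)$ we have $It \subseteq I\tr(M) = x\tr(M) \subseteq xR$ (since $\tr(M)\subseteq R$), hence $t \in (x):_R I$. This gives $\tr(M)\subseteq (x):_R I$.

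Next I would treat the refined chain under the assumption that the residue field is infinite. The only new link is $\tr(M) \subseteq \core(I):_R I$, and $\core(I):_R I \subseteq (x):_R I$ is immediate because $\core(I)$ is the intersection of all minimal reductions, in particular $\core(I)\subseteq (x)$ for the chosen principal reduction $x$, so $\core(I):_R I \subseteq (x):_R I$. For $\tr(M)\subseteq \core(I):_R I$: the point is that the argument in the previous paragraph did not use anything special about $x$ beyond its being a principal (minimal) reduction of $I$. Indeed, if $y$ is \emph{any} minimal reduction of $I$, then $y$ is again a principal reduction (the residue field being infinite guarantees minimal reductions are principal in this one-dimensional setting), and $M$ being $I$-Ulrich implies $IM = yM$ by \Cref{I-ulrich} applied with the reduction $y$. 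Running the same computation yields $\tr(M)\subseteq (y):_R I$ for every minimal reduction $y$. Intersecting over all $y$ gives $\tr(M) \subseteq \bigcap_y \big((y):_R I\big) = \big(\bigcap_y (y)\big) :_R I = \core(I):_R I$, where the middle equality is the standard fact that colon commutes with arbitrary intersections in the first argument.

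I would then record the remaining inclusion $\core(I):_R I \subseteq (x):_R I \subseteq \tr(I)$ as assembled above, completing both chains. The main obstacle I anticipate is the verification that $M$ being $I$-Ulrich really forces $IM = yM$ for \emph{every} minimal reduction $y$ simultaneously, and that minimal reductions can be taken principal — this is exactly where the infinite residue field hypothesis enters, and where one must be careful that \Cref{I-ulrich} is being applied correctly for each choice of $y$ (the equivalence (1)$\Leftrightarrow$(2) there is stated for a fixed reduction $x$, but since (1) $M$ is $I$-Ulrich is intrinsic to $M$ and $I$, it transfers to give $IM = yM$ for any principal reduction $y$). A secondary routine point is confirming that the colon--intersection identity $\big(\bigcap_\lambda N_\lambda\big) :_R I = \bigcap_\lambda (N_\lambda :_R I)$ applies over the (possibly infinite) family of minimal reductions, which is elementary. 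Everything else is a short manipulation of trace ideals using $\tr(\tr(M)) = \tr(M)$ and the $I$-Ulrich stability $\frac{I}{x}\tr(M) = \tr(M)$.
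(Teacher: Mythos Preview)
Your proposal is correct and follows essentially the same route as the paper: use \Cref{traceisulrich} to see that $J=\tr(M)$ is itself $I$-Ulrich, so $IJ=xJ\subseteq (x)$ for any principal reduction $x$, giving $J\subseteq (x):_RI$; then invoke \Cref{traceformula} for the right inclusion and, in the infinite residue field case, intersect over all principal reductions to hit $\core(I):_R I$. The paper's write-up is terser but the argument is the same, including the observation that the key step works uniformly for every principal reduction because $I$-Ulrichness is intrinsic to $M$ and $I$.
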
{}

\begin{proof}
Let  $\ds J=\tr(M)$.  Note that by  \Cref{traceisulrich}, $J \in \ulrichcat_I(R)$ and so $IJ=xJ\subset (x)$ for any principal reduction $x$. So $J\subset \cap ((x):_RI) = \core(I):_R I$. The last inclusion comes from \Cref{traceformula}. If the residue field of $R$ is infinite, then the core is the intersection of all principal reductions, and the second part follows. 
\end{proof}{}

\begin{cor}\label{corinterval}
Let $R$ be as in \Cref{I-UlrichThm}. Suppose that $I$ is a regular ideal with  a principal reduction $x$. Then
 $$b(I)= \tr(b(I)) \subseteq (x):_RI \subseteq \tr(I).$$
 If the residue field of $R$ is infinite, then:
$$b(I)= \tr(b(I))\subseteq \core(I):_RI \subseteq (x):_RI \subseteq \tr(I).$$

\end{cor}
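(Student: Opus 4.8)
The plan is to deduce this statement from \Cref{ulrichinterval} by choosing the right $I$-Ulrich module, namely $M = b(I)$ itself. First I would observe that $b(I) = \cond_R(B(I))$ is a module over $B(I)$, and since $B(I) = B(I^n)$ contains a principal reduction of $I$ eventually, \Cref{I-UlrichThm} (via the equivalence ``$M \in \cm(B(I))$'' $\Leftrightarrow$ ``$M$ is $I$-Ulrich'') tells us $b(I) \in \ulrichcat_I(R)$. Then $b(I)$ is a regular trace ideal: it is regular because it contains the conductor $\cond$ by \Cref{condref}/\Cref{tracecatrem} (or directly because $B(I)$ is finite birational over $R$ and \Cref{goto2020} applies), and it is a trace ideal because $\tr(b(I)) = b(I)$ by \Cref{refIUl1} together with \Cref{blattice} — indeed $b(I)$ is the largest $I$-Ulrich ideal and is the trace of $B(I)$.

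Once we know $M = b(I)$ is an $I$-Ulrich module with $\tr(b(I)) = b(I)$, the chain of inclusions is immediate: plug $M = b(I)$ into \Cref{ulrichinterval}, which gives $\tr(b(I)) \subseteq (x):_R I \subseteq \tr(I)$, i.e. $b(I) \subseteq (x):_R I \subseteq \tr(I)$, and in the infinite residue field case the refined chain $b(I) \subseteq \core(I):_R I \subseteq (x):_R I \subseteq \tr(I)$. The equality $b(I) = \tr(b(I))$ is recorded explicitly in the displayed formulas, so I would state it as part of the conclusion, justified by the fact that conductors of finite birational extensions are trace ideals (\Cref{goto2020}), or alternatively by noting $b(I)$ is $I$-Ulrich and hence $\tr(b(I))$ is $I$-Ulrich by \Cref{traceisulrich}, but $b(I)$ is already the largest such ideal by \Cref{blattice} while $\tr(b(I)) \supseteq$ nothing bigger, forcing $\tr(b(I)) = b(I)$ (a cleaner route: $b(I) \subseteq \tr(b(I))$ since $b(I)$ is regular hence $\tr(b(I)) \supseteq b(I)\cdot(R:b(I)) \ni$ a unit multiple... actually $b(I)$ regular gives $b(I) \subseteq \tr(b(I))$ trivially, and $\tr(b(I)) \subseteq b(I)$ because $\tr(b(I)) = \tr(B(I))$ as $b(I) \cong B(I)^*$ and $\tr(B(I)) = B(I)^* B(I) = b(I)$).

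There is essentially no obstacle here — the proposition is a direct corollary of \Cref{ulrichinterval} applied to the specific module $b(I)$, with the two supporting facts being (i) $b(I)$ is $I$-Ulrich, which follows because $b(I)$ is a $B(I)$-module and $B(I)$ is the blow-up, and (ii) $b(I)$ is its own trace, which is the content of \Cref{goto2020} (conductors of finite birational extensions are trace ideals). The only point requiring a word of care is the reduction to the infinite residue field case: the inclusions $(x):_R I \subseteq \tr(I)$ and the core statement are already handled inside \Cref{ulrichinterval} and \Cref{traceformula}, so no separate base-change argument is needed beyond what those results already absorb. I would therefore write the proof in two or three lines: ``By \Cref{I-UlrichThm} and \Cref{B(I)action}, $b(I) = \cond_R(B(I))$ is $I$-Ulrich since it is a $B(I)$-module; by \Cref{goto2020} it equals its own trace; now apply \Cref{ulrichinterval} with $M = b(I)$.''
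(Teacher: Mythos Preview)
Your proposal is correct and matches the paper's approach exactly: the paper's proof is the one-liner ``Since $b(I) = \tr(B(I)) \in \ulrichcat_I(R)$, the conclusion follows from \Cref{ulrichinterval}.'' Your final two-line summary is precisely this argument, with the same justifications (conductor of a finite birational extension is a trace ideal, and $b(I)$ is $I$-Ulrich as a $B(I)$-module).
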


\begin{proof}
Since $b(I) =\tr(B(I)) \in \ulrichcat_I(R)$, the conclusion follows from \Cref{ulrichinterval}.  
\end{proof}

\begin{cor}\label{refIUl2}
Let $R$ be as in \Cref{I-UlrichThm}. Assume that the residue field of $R$ is infinite. Let $M\in \rf(R)$. The following are equivalent.

\begin{enumerate}
    \item $M$ is $I$-Ulrich.
    \item $\tr(M)\subseteq b(I)$.
    \item $\tr(M)\subseteq (x):_RI$ for some principal reduction $x$ of $I$.
    \item $\tr(M)\subseteq (x):_RI$ for any principal reduction $x$ of $I$.
    \item  $\tr(M)\subseteq \core(I):_RI$.
\end{enumerate}
\end{cor}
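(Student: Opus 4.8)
The plan is to prove the cycle of implications $(1) \Rightarrow (2) \Rightarrow (3) \Rightarrow (5) \Rightarrow (1)$, together with the easy equivalence of $(3)$ and $(4)$, drawing almost entirely on results already established in this section. Since $M$ is reflexive, \Cref{refIUl1} gives the equivalence $(1) \Leftrightarrow (2)$ outright, so really only the relationship among $(2)$, $(3)$, $(4)$, $(5)$ needs a moment's thought. For $(2) \Rightarrow (5)$: by \Cref{corinterval}, under the infinite residue field hypothesis we have the chain $b(I) \subseteq \core(I):_R I \subseteq (x):_R I \subseteq \tr(I)$ for any principal reduction $x$; in particular $b(I) \subseteq \core(I):_R I$, so $\tr(M) \subseteq b(I)$ forces $\tr(M) \subseteq \core(I):_R I$. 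Conversely $(5) \Rightarrow (3) \Rightarrow (4)$ (or $(5)\Rightarrow (4)$ directly): since the residue field is infinite, $\core(I)$ is the intersection of all principal reductions of $I$, whence $\core(I):_R I = \bigcap_x \big((x):_R I\big)$ over all principal reductions $x$; thus $\tr(M) \subseteq \core(I):_R I$ is equivalent to $\tr(M) \subseteq (x):_R I$ for \emph{every} principal reduction $x$, which trivially implies the same for \emph{some} principal reduction.

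It then remains to close the loop with $(3) \Rightarrow (1)$ (or $(3) \Rightarrow (2)$). Here I would argue directly: suppose $\tr(M) \subseteq (x):_R I$ for some principal reduction $x$ of $I$. Replacing $M$ by the isomorphic module $\tr(M) = (R:M)M$ is awkward since $M$ need not be an ideal, so instead I would use \Cref{refIUl1} in the reverse direction — it suffices to show $\tr(M) \subseteq b(I)$. But by \Cref{corinterval} we do \emph{not} in general have $(x):_R I \subseteq b(I)$, so this needs care. The cleaner route: from $\tr(M) \subseteq (x):_R I$ we get $I \cdot \tr(M) \subseteq (x)$, i.e. $I\,\tr(M) = x\,\tr(M)$ after intersecting appropriately — more precisely, $I\,\tr(M)\subseteq (x)\cap I\,\tr(M)$, and since $x\in I$ we have $x\,\tr(M)\subseteq I\,\tr(M)$; to get the reverse, note every element of $I\,\tr(M)$ lies in $(x)$, say equals $xr$, and one checks $r \in \tr(M)$ using that $\tr(M)$ is a trace ideal (so $\tr(M) : \tr(M) = R : \tr(M)$ by \Cref{traceproperties}(2), giving the needed divisibility). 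Hence $\tr(M)$ is $I$-Ulrich by \Cref{I-ulrich}(2), so $\tr(M) \in \ulrichcat_I(R)$, so $\tr(M) \subseteq b(I)$ by \Cref{corinterval}, and finally \Cref{refIUl1} gives that the reflexive module $M$ is $I$-Ulrich, which is $(1)$.

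I expect the main obstacle to be precisely the step $I\,\tr(M)\subseteq (x) \Rightarrow I\,\tr(M) = x\,\tr(M)$: one needs to be sure that ``$I J \subseteq (x)$ and $x \in I$ a reduction'' forces $IJ = xJ$ when $J$ is a trace ideal (equivalently, when $J:J = R:J$). This should follow because $J = \tr(M)$ is $B(I)$-stable-adjacent — in fact the slick observation is that $\tr(M) \subseteq (x):_R I$ says exactly $\frac{I}{x}\tr(M)\subseteq R$, and since $\tr(M)$ is a trace ideal, $\frac{I}{x}\tr(M) \subseteq \tr(M):\tr(M) \cap R = R:\tr(M)\cdot(\text{something})$... — so I would instead phrase it as: $\tr(M)\subseteq (x):_RI$ iff $I\tr(M)\subseteq x\tr(M)$, which by \Cref{I-ulrich} (applied with $M$ there $=\tr(M)$) is equivalent to $\tr(M)$ being $I$-Ulrich; the content of ``$I\tr(M)\subseteq (x)$ implies $I\tr(M)\subseteq x\tr(M)$'' is then the assertion that $\frac{1}{x}I\tr(M)\cdot\tr(M)\subseteq\tr(M)$, i.e. $B(I)$ acts on $\tr(M)$, which holds because $\frac1x I\cdot\tr(M)\subseteq R$ and $\tr(M)$ is a faithful trace ideal so $R\subseteq \tr(M):\tr(M)$ absorbs it. Everything else is bookkeeping with the already-proven \Cref{corinterval} and \Cref{refIUl1}.
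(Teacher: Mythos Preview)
Your reduction to the implication $(3)\Rightarrow(1)$ via \Cref{refIUl1} and \Cref{ulrichinterval} is fine, and the idea of showing $\tr(M)$ is $I$-Ulrich, then invoking \Cref{blattice} and \Cref{refIUl1}, is reasonable. The gap is in the step where you pass from $I\,\tr(M)\subseteq(x)$ to $I\,\tr(M)\subseteq x\,\tr(M)$: you invoke \Cref{traceproperties}(2) to get $\tr(M):\tr(M)=R:\tr(M)$, but that proposition is stated only for submodules of $Q(R)$ containing a nonzerodivisor, and nothing in the hypotheses guarantees $\tr(M)$ is regular. For instance, over $R=k[[x,y]]/(xy)$ the module $M=R/(x)$ is reflexive with $\tr(M)=(y)$, an ideal consisting entirely of zerodivisors; one checks directly in $Q(R)=k((x))\times k((y))$ that $R:(y)=k((x))\times y^{-1}k[[y]]$ strictly contains $(y):(y)=k((x))\times k[[y]]$. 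So the equality you need can fail, and your divisibility argument does not go through as written. Your later reformulation (``$R\subseteq \tr(M):\tr(M)$ absorbs it'') does not help either: the containment $R\subseteq \tr(M):\tr(M)$ is trivial and does not yield $\frac{I}{x}\subseteq \tr(M):\tr(M)$.

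The paper's proof of $(3)\Rightarrow(1)$ avoids $\tr(M)$ entirely and works with $M^*$ instead. From $\tr(M)\subseteq (x):_R I$, for every $f\in M^*$ and $a\in I$ one has $(af)(M)=a\cdot f(M)\subseteq I\cdot\tr(M)\subseteq (x)$, so $af\in\Hom_R(M,xR)=xM^*$, the last equality holding because $x$ is a nonzerodivisor. Thus $IM^*\subseteq xM^*$ and $M^*$ is $I$-Ulrich by \Cref{I-ulrich}; then $M\cong M^{**}=\Hom_R(M^*,R)$ is $I$-Ulrich by \Cref{ulrichunderhom}. This is exactly the ``divide through by $x$'' maneuver you were reaching for, but carried out on $M^*$ rather than on $\tr(M)$, where no regularity hypothesis on the trace is needed.
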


\begin{proof}
Combining \Cref{refIUl1} and \Cref{ulrichinterval}, we see that the proof would be complete if we show (3) implies (1). If (3) holds, then for any $f\in M^*$, we have that $I\cdot f\subseteq \Hom_R(M,xR)=xM^*$. Therefore by \Cref{I-ulrich}, $M^*$ is $I$-Ulrich. Since $M\in \rf{R}$, by \Cref{ulrichunderhom} $M$ is $I$-Ulrich.
\end{proof}

In light of the above results, it is natural to ask when $b(I)=\tr(I)$. Note that if this is the case, then $(x):_R I$ is independent of the principal reduction. 

\begin{prop}
Let $R$ be as in \Cref{I-UlrichThm}. If $I$ is a regular reflexive trace ideal such that $xI=I^2$ for some $x\in I$, then $b(I)=\tr(I)$. 
\end{prop}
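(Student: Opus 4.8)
The plan is to show that under the hypotheses, $\tr(I) \in \ulrichcat_I(R)$; since $b(I)$ is the largest $I$-Ulrich ideal by \Cref{blattice} and always contains every $I$-Ulrich ideal, while $\tr(I) \supseteq b(I)$ always holds (this is the inclusion $b(I) = \tr(b(I)) \subseteq \tr(I)$ coming from \Cref{corinterval}, since $B(I) \subseteq B(I)$ forces nothing — rather, $b(I) \subseteq \tr(I)$ is immediate as $b(I)$ is itself an $I$-Ulrich ideal so lies inside the largest one which is... in fact one should argue $b(I) \subseteq \tr(I)$ directly from $\tr(I) \supseteq (x):_R I \supseteq b(I)$ via \Cref{corinterval}), we get $\tr(I) = b(I)$.

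The key computation is to verify $I \cdot \tr(I) = x \cdot \tr(I)$, which by \Cref{I-ulrich} (after possibly enlarging the residue field, noting $b(I)$ and $\tr(I)$ are stable under faithfully flat extension, or just using \Cref{I-UlrichThm} directly) is equivalent to $\tr(I)$ being $I$-Ulrich. Here the hypothesis $xI = I^2$ enters: first, since $I$ is a reflexive trace ideal, \Cref{xI} gives $\tr(I) = (x):_R I = I$, i.e. $I$ is itself a trace ideal equal to $(x):_R I$ — wait, more carefully, $I$ being a trace ideal with $I^2 = xI$ means by \Cref{xI} that $(x):_R I = \tr(I) = I$. So in fact $\tr(I) = I$. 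Then one must check $I \in \ulrichcat_I(R)$, i.e. $I \cdot I = x I$, which is exactly the hypothesis $I^2 = xI$. Hence $I = \tr(I)$ is $I$-Ulrich, so it is contained in $b(I)$; combined with $b(I) \subseteq \tr(I) = I$ from \Cref{corinterval}, we conclude $b(I) = \tr(I) = I$.

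So the actual argument collapses to: (i) apply \Cref{xI} to the reflexive trace ideal $I$ with $I^2 = xI$ to get $\tr(I) = (x):_R I = I$; (ii) observe $I^2 = xI$ says precisely $I \in \ulrichcat_I(R)$ by the criterion $IM \cong M$ / $IM = xM$ of \Cref{I-ulrich}; (iii) since $I$ is an $I$-Ulrich ideal, $I \subseteq b(I)$ (the largest $I$-Ulrich ideal, \Cref{blattice}); (iv) from \Cref{corinterval}, $b(I) \subseteq (x):_R I = I$; (v) conclude $b(I) = I = \tr(I)$.

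The main obstacle — really the only place requiring care — is making sure the hypotheses of \Cref{xI} and \Cref{blattice}/\Cref{corinterval} are genuinely met: \Cref{xI} needs $x$ to be a regular element of $I$, which holds since $I$ is regular and $x$ generates a reduction (one should check or assume $x$ is a non-zerodivisor; if not stated, pick $x$ regular, which is possible after the standard residue-field enlargement or because $I^2 = xI$ with $I$ regular forces $x$ regular up to the usual arguments), and \Cref{corinterval} is stated for ideals with a principal reduction, which $x$ provides. Everything else is a direct chain of the cited results.
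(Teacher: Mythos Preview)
Your argument is correct. After the self-corrections, it settles into the clean chain: (i) since $I$ is a trace ideal with $I^2=xI$, \Cref{xI} gives $\tr(I)=(x):_RI=I$; (ii) the equation $I\cdot I=xI$ says $I\in\ulrichcat_I(R)$ by \Cref{I-ulrich}; (iii) hence $I\subseteq b(I)$ by \Cref{blattice}; (iv) but $b(I)\subseteq (x):_RI=I$ by \Cref{corinterval}; (v) so $b(I)=I=\tr(I)$. The concern about $x$ being regular is easily dispatched, as you note: $I^2$ contains a nonzerodivisor, and if $x$ were a zerodivisor then every element of $xI$ would be one too.

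The paper reaches the same equality $b(I)=I=\tr(I)$ by a genuinely different route. Rather than working inside the lattice of $I$-Ulrich ideals, it computes $B(I)$ directly: from $xI=I^2$ one gets $I^n\cong I$ for all $n\geq 1$, so $B(I)=\bigcup_n(I^n:I^n)=I:I=\End_R(I)$. Then the bijection of \Cref{goto2020} between reflexive regular trace ideals and reflexive birational extensions, applied with $S=\End_R(I)$, yields $b(I)=\cond_R(\End_R(I))=I$. Your approach stays entirely within the $I$-Ulrich machinery of Section~\ref{secIulrich} and avoids invoking the birational correspondence; the paper's approach instead highlights that $B(I)$ is as small as possible (namely $\End_R(I)$) under these hypotheses, which is conceptually informative in its own right. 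Both are short and either would serve.
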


\begin{proof}
As $I^n\cong I$ for all $n>0$. we get that $B(I) = I: I$. By \Cref{goto2020}, $b(I)=I=\tr(I)$. 
\end{proof}

We can moreover relate $b(I)$ with $\core{I}:_RI$ for a large number of cases. 

\begin{thm}
    Let $R$ be a reduced one dimensional ring with infinite residue field $k$. Let $I$ be a regular ideal with reduction number $r$. Assume that $\chr(k)=0$ or $\chr(k)>r$. Then $$b(I)=\core(I):_R I.$$
    \end{thm}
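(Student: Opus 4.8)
The plan is to prove the two inclusions separately. The inclusion $b(I) \subseteq \core(I):_R I$ is already available for free: by \Cref{corinterval} we have $b(I) = \tr(b(I)) \subseteq \core(I):_R I$ whenever the residue field is infinite, with no hypothesis on the characteristic. So the entire content of the theorem is the reverse inclusion $\core(I):_R I \subseteq b(I)$, and the characteristic hypothesis must be what makes this work. Since $b(I)$ is the coefficient ideal (the largest ideal $J$ with $xJ = IJ$ for a principal reduction $x$), it suffices to show that $J := \core(I):_R I$ satisfies $IJ = xJ$ for some (equivalently every) principal reduction $x$; the containment $xJ \subseteq IJ$ is automatic, so the real claim is $IJ \subseteq xJ$, i.e. $I(\core(I):_R I) \subseteq x(\core(I):_R I)$.

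First I would reduce to a statement purely about the core and a single reduction. Fix a principal reduction $x$ of $I$ with reduction number $r$, so $I^{r+1} = xI^r$. The key input I would invoke is the theorem of Huneke--Trung / Polini--Ulrich--Vasconcelos-type results on the core in the one-dimensional (or more generally the situation where $I$ has a principal reduction) setting: when $k$ is infinite and $\chr(k) = 0$ or $\chr(k) > r$, one has the formula $\core(I) = x \cdot (x^{r}:_R I^{r}) \cap I^{?}$ — more precisely the clean formula $\core(I) = (x^{r+1}):_R I^r$ or $\core(I) = x^r \cdot \overline{I^{r+1}} \colon \dots$ in our setting it should collapse to $\core(I) = x^{r}\cdot b(I)$ or to an expression of the form $\core(I) = x \cdot (x^r :_R I^r) \cdot \text{(something)}$. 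I would track down the exact shape: in a one-dimensional Cohen--Macaulay local ring with $x$ a principal reduction of reduction number $r$, under the characteristic assumption, $\core(I) = x^{?}(I^{r}:_R I^{r}) = x^{?}B(I)$ intersected appropriately. Given such a formula expressing $\core(I)$ in terms of powers of $x$ times $B(I)$ (equivalently times $b(I)$, once one passes back to genuine ideals of $R$), the inclusion $I \cdot (\core(I):_R I) \subseteq x \cdot (\core(I):_R I)$ should follow by a direct computation: multiplying by $I$ and using $IB(I) = xB(I)$ (which holds because $B(I)$ is $I$-Ulrich, \Cref{I-UlrichThm}, hence $I \cdot b(I) = x\cdot b(I)$).

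The main obstacle I anticipate is pinning down and correctly applying the right ``core formula.'' The cleanest route is: show $\core(I) = xb(I)$. Indeed ``$\supseteq$'' is easy since $b(I)$ lies in every $(y):_R I$ for $y$ a principal reduction, hence $xb(I) \subseteq x((y):_RI) \cdot$-type arguments give $xb(I) \subseteq (y)$ for all reductions $y$; and ``$\subseteq$'' is exactly where Huneke--Trung's theorem (valid under $\chr k = 0$ or $\chr k > r$) enters, giving $\core(I) = x^{r+1}:_R I^r$ and then one checks $x^{r+1}:_R I^r = x(x^r :_R I^r) = x b(I^r) = xb(I)$ using that $I^r B(I) = x^r B(I)$. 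Once $\core(I) = xb(I)$ is established, $\core(I):_R I = (xb(I)):_R I \supseteq b(I)$ trivially (since $Ib(I) = xb(I)$), and combined with the free inclusion from \Cref{corinterval} we get equality $b(I) = \core(I):_R I$. So the proof structure is: (i) invoke the core formula to get $\core(I) = xb(I)$; (ii) deduce $\core(I):_R I \supseteq b(I)$; (iii) quote \Cref{corinterval} for $\subseteq$. The delicate point throughout is verifying the colon-ideal manipulations $x^{r+1}:_R I^r = x(x^r:_R I^r)$, which needs $x$ to be a nonzerodivisor (true, $x$ is regular) so that multiplication by $x$ is injective and these colons behave well.
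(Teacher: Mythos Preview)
Your overall strategy is sound and uses the same key input as the paper (the Polini--Ulrich core formula), but your final assembly in steps (ii)--(iii) contains a genuine logical error. From $\core(I) = xb(I)$ you deduce $\core(I):_R I \supseteq b(I)$, and then you ``combine with \Cref{corinterval}'' --- but \Cref{corinterval} gives precisely $b(I) \subseteq \core(I):_R I$, the \emph{same} inclusion. You have proved one direction twice and never established $\core(I):_R I \subseteq b(I)$, which you yourself identified in the opening paragraph as the entire content of the theorem. The fix is immediate once you have $\core(I) = xb(I)$: if $aI \subseteq xb(I)$ then in particular $ax \in xb(I)$ (since $x\in I$), and as $x$ is a nonzerodivisor this forces $a \in b(I)$. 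Alternatively, you could carry through your earlier, correct plan of showing directly that $J := \core(I):_R I$ is $I$-Ulrich and then invoking \Cref{blattice}.

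For comparison, the paper's argument runs as follows: from $\core(I) = x^{n+1}:_R I^n$ (Polini--Ulrich, for $n$ large) one gets $\core(I):_R I = x^{n+1}:_R I^{n+1} \cong (I^{n+1})^*$; since $I^{n+1}$ is $I$-Ulrich for large $n$, so is its dual by \Cref{ulrichunderhom}, whence $\core(I):_R I \subseteq b(I)$ by \Cref{blattice}. Your route via the intermediate identity $\core(I) = xb(I)$ is essentially the same computation (indeed $x^n:_R I^n = b(I)$ once $n$ is at least the reduction number, since then $B(I) = I^n/x^n$), just packaged differently; the explicit formula $\core(I)=xb(I)$ is a pleasant byproduct.
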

    \begin{proof} By \cite[Theorem 3.4 b]{polini2005formula} we have that, $$\core(I)=x^{n+1}:_R I^n$$ for suitably large $n$, where $x$ is a principal reduction of $I$. Thus, \[\ds \core(I):_R I=x^{n+1}:_R I^{n+1}\cong (I^{n+1})^*\] Now for large $n$, $(I^{n+1})^*$ is $I$-Ulrich by \Cref{ulrichunderhom} and hence $b(I)=\core(I):_R I$ by \Cref{blattice}.
    \end{proof}
    
    The next proposition will help in establishing some finiteness results.

\begin{prop}
Let $R$ be as in \Cref{I-UlrichThm}. Let $I,J$ be regular ideals. Consider the following statements.
\begin{enumerate}
    \item $\ulrichcat_I(R)=\ulrichcat_J(R)$.
    \item $B(I)=B(J)$.
    \item $b(I)=b(J)$.
\end{enumerate}
Then $(1) \iff (2) \implies (3)$. If $R$ is Gorenstein, then all three are equivalent. 
\end{prop}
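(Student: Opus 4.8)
The plan is to prove the chain $(1)\iff(2)\implies(3)$ first, and then add the Gorenstein hypothesis to get $(3)\implies(2)$.

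For $(1)\iff(2)$: the category $\ulrichcat_I(R)$ coincides with $\cm(B(I))$ by \Cref{I-UlrichThm}(2)$\iff$(6). So $(1)$ says $\cm(B(I))=\cm(B(J))$ as subcategories of $\cm(R)$. Since $B(I)$ and $B(J)$ are both finite birational extensions of $R$, each is a maximal Cohen--Macaulay module over itself, hence lies in its own $\cm$; so $B(I)\in\cm(B(J))$ forces $B(I)$ to be a $B(J)$-module and vice versa, giving $B(I)=B(J)$ (both sit inside $Q(R)$ and contain each other). Conversely $(2)\implies(1)$ is immediate since the categories are literally determined by the ring. For $(2)\implies(3)$: $b(I)=\cond_R(B(I))=R:B(I)$, which only depends on $B(I)$, so $B(I)=B(J)$ gives $b(I)=b(J)$ at once.

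For the converse under the Gorenstein hypothesis, i.e. $(3)\implies(2)$ when $R$ is Gorenstein: here I would use that $b(I)$ and $b(J)$ are regular trace ideals (by \Cref{blattice}, $b(I)=\tr(b(I))$), and when $R$ is Gorenstein every maximal Cohen--Macaulay module --- in particular every regular ideal --- is reflexive, so $b(I),b(J)\in\reftracecat(R)$. Then \Cref{goto2020} gives a bijection between reflexive regular trace ideals and reflexive birational extensions, with $b(I)=\cond_R(B(I))$ being precisely the image of $B(I)$ under the inverse map $\beta$. Actually one must check that $B(I)$ itself is reflexive as an $R$-module; since $R$ is Gorenstein and $B(I)\in\cm(R)$ (as $\bar R$ is module-finite over a $1$-dimensional CM ring --- here we may need $B(I)$ to be module finite, which follows since it is sandwiched between $R$ and $\bar R$, and $\bar R$ is finite over a reduced complete ring; more carefully, $B(I)=R[I/x]$ is a finitely generated $R$-algebra contained in $\overline R$, and over a Gorenstein, hence reduced-in-codimension-zero... ), reflexivity is automatic by \Cref{closetorem}(3). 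Given that, $\alpha(b(I))=\End_R(b(I))=B(I)$ by the bijection, so $b(I)=b(J)$ implies $B(I)=\End_R(b(I))=\End_R(b(J))=B(J)$, which is $(2)$.

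The main obstacle I anticipate is the $(3)\implies(2)$ direction: one must be careful that $B(I)$ is a module-finite birational extension (so that \Cref{goto2020} and the identification $B(I)=\End_R(b(I))$ apply), and that reflexivity of $B(I)$ over $R$ holds --- both are clean under the Gorenstein assumption since every MCM module is reflexive, but the identification $\End_R(\cond_R(S))=S$ for a reflexive birational $S$ is exactly the content of the bijection $\alpha,\beta$ in \Cref{goto2020} and should be cited rather than reproved. The first two implications are formal consequences of the dictionary between $I$-Ulrich modules, the ring $B(I)$, and its conductor $b(I)$ established earlier in this section.
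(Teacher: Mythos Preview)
Your proposal is correct and takes essentially the same approach as the paper: both use the identification $\ulrichcat_I(R)=\cm(B(I))$ from \Cref{I-UlrichThm} for $(1)\iff(2)$, the definition of the conductor for $(2)\implies(3)$, and reflexivity of $B(I)$ (automatic over a Gorenstein ring) together with the bijection of \Cref{goto2020} for the converse $(3)\implies(2)$. Your worry about module-finiteness of $B(I)$ is unnecessary --- this is \Cref{B(I)finite} and does not depend on $\bar R$ being finite over $R$.
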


\begin{proof}
 Recall that $\ulrichcat_I(R)= \cm(B(I))$  by \Cref{I-UlrichThm}. So if $B(I)=B(J)$ then $\ulrichcat_I(R)=\ulrichcat_J(R)$. Now assume $(1)$. Let $S=B(I)$ and $T=B(J)$. Then $S\in \ulrichcat_J(R)= \cm(T)$. Thus $T\subset TS\subset S$. By symmetry $S\subset T$, so $S=T$.
 If $S=T$, then $\cond_R(S)=\cond_R(T)$, so $(3)$ follows from $(2)$.  If $R$ is Gorenstein, then $B(I), B(J)$ are reflexive, so $b(I)=b(J)$ implies $B(I)=B(J)$ by \Cref{goto2020}.
\end{proof}

\begin{thm}\label{ulrichIs}
Let $R$ be as in \Cref{I-UlrichThm}. Let $\cond$ be the conductor and $I$ be a regular ideal. If $\cond \cong I^s$ for some $s$ then $\ulrichcat_I(R)=\cm(\overline R)$. If furthermore $R$ is complete and reduced, then $\ulrichcat_I(R)$ has finite type. 

\end{thm}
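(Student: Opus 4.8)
The plan is to show that the hypothesis $\cond\cong I^s$ forces $B(I)=\overline R$. Granting this, \Cref{I-UlrichThm} immediately gives $\ulrichcat_I(R)=\cm(B(I))=\cm(\overline R)$, which is the first assertion, and the second will follow by identifying $\cm(\overline R)$ when $R$ is complete and reduced. As a preliminary, note that $\cond\cong I^s$ is a regular fractional ideal, since $I$, hence $I^s$, is regular; in particular $\cond$ contains a nonzerodivisor $c$, so $c\overline R\subseteq R$ and $\overline R\subseteq c^{-1}R$ is a finite $R$-module.

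The heart of the matter is the computation $I^s:I^s=\overline R$. First, two isomorphic regular fractional ideals of $R$ differ by a unit of $Q(R)$: an isomorphism $\cond\cong I^s$, tensored with $Q(R)$, becomes a $Q(R)$-linear automorphism of $Q(R)$, i.e.\ multiplication by a unit $u$, so $\cond=uI^s$; hence $\cond:\cond=I^s:I^s$ inside $Q(R)$. Next, $\cond$ is an ideal of $\overline R$, since for $v\in\overline R$ and $c\in\cond$ we have $vc\,\overline R\subseteq c\,\overline R\subseteq R$, so $vc\in\cond$; thus $\overline R\subseteq\cond:\cond$. Conversely $\cond:\cond\subseteq\overline R$ by the determinant trick, as $\cond$ is a faithful finitely generated $R$-module. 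Therefore $\cond:\cond=\overline R$, and so $I^s:I^s=\overline R$. Finally, $B(I)=\bigcup_{n\ge 0}(I^n:I^n)\subseteq\overline R$ because every element of $I^n:I^n$ is integral over $R$ (determinant trick again), while $I^s:I^s$ is one of the terms of this union, so $I^s:I^s\subseteq B(I)$. Combining, $B(I)=I^s:I^s=\overline R$, and the first assertion follows from \Cref{I-UlrichThm}.

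For the last assertion, assume in addition that $R$ is complete and reduced. Then $Q(R)=\prod_{i=1}^{n}R_{P_i}$ is a finite product of fields indexed by the minimal primes $P_1,\dots,P_n$ of $R$, and $\overline R=\prod_{i=1}^{n}V_i$, where $V_i$ is the normalization of the complete local domain $R/P_i$ and hence a complete discrete valuation ring. Since a finitely generated torsion-free module over a DVR is free, every $M\in\cm(\overline R)$ decomposes as $M\cong\bigoplus_{i=1}^{n}V_i^{a_i}$. Each $V_i$, regarded as an $R$-module, is indecomposable (one checks $\End_R(V_i)=V_i$, which is local) and satisfies $\ann_R(V_i)=P_i$, so the $V_i$ are pairwise non-isomorphic. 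Hence, up to isomorphism, $\ulrichcat_I(R)=\cm(\overline R)$ has exactly the $n$ indecomposable objects $V_1,\dots,V_n$, and is therefore of finite type.

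The step I expect to require the most care is the identification $\cond:\cond=I^s:I^s=\overline R$: upgrading the abstract isomorphism $\cond\cong I^s$ to an equality $\cond=uI^s$ of fractional ideals in $Q(R)$, and recognizing $\cond:\cond$ as the normalization. The remaining ingredients are routine: \Cref{I-UlrichThm}, \Cref{convention_remark}, the determinant trick, and the structure theory of normalizations of complete local rings.
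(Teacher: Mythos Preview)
Your proof is correct and follows essentially the same approach as the paper: both reduce to showing $B(I)=\overline R$ via the identity $\cond:\cond=\overline R$, then invoke \Cref{I-UlrichThm} and the structure of $\overline R$ in the complete reduced case. The only cosmetic difference is that the paper writes the chain as $B(I)=B(I^s)=B(\cond)=\overline R$ (using $\cond\cong\cond^n$, which follows from \Cref{condref}), whereas you use the sandwich $\overline R=I^s:I^s\subseteq B(I)\subseteq\overline R$; your packaging is slightly more self-contained but the substance is identical.
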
{}

\begin{proof}
As $\cond$ is a regular ideal, $\overline R$ is $R$-finite. As $\cond\cong \cond^n$ for all $n$, $B(\cond)=\overline R$. On the other hand $B(I)= B(I^s)=B(\cond)$, proving  the first claim. If $R$ is complete and reduced, then $\overline R$ is a product of DVRs, so $\ulrichcat_I(R)= \cm(\overline R)$ has finite type.   
\end{proof}{}

\begin{prop}\label{endI}
Let $R$ be as in \Cref{I-UlrichThm}. Assume that $I$ is a regular ideal. Let $S=\End_R(I)$ (which is a birational extension of $R$). If $M$ is $I$-Ulrich, then $\Hom_R(M,I)\cong \Hom_R(M,S)$.
\end{prop}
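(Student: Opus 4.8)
The plan is to reduce the claim to the identity $I \cdot S = S$ (equivalently $IS \cong S$), which holds precisely because $S = \End_R(I)$ is $I$-Ulrich. Indeed, $S$ is a module over $B(I)$ in the obvious way (for any principal reduction $x$ of $I$, after enlarging the residue field, $\frac{I}{x} \subseteq I:I = S$ since $I$ is regular, so $B(I) = R[\frac{I}{x}] \subseteq S$), hence $S \in \cm(B(I))$ and $S$ is $I$-Ulrich by \Cref{I-UlrichThm}. Then $IS \cong S$, and since $S$ is a ring containing a nonzerodivisor and $IS$ is an $S$-submodule of $S$ that is isomorphic to $S$ as an $S$-module, we in fact get $IS = S$: an $S$-linear isomorphism $S \xrightarrow{\sim} IS$ is multiplication by a unit $u \in IS \subseteq S$, forcing $IS = S$. (Alternatively, this is exactly \Cref{I-ulrichextension}: $S$ is $I$-Ulrich iff $B(I) \subseteq S$, and $IB(I) = B(I)$ trivially extends.)

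Next I would use this to compare the two Hom modules. Since $M$ is $I$-Ulrich, by \Cref{B(I)action} the action of $B(I)$ — and hence of $S$, once we know $B(I) \subseteq S$ — extends the $R$-action on $M$; in fact $IM \cong M$ gives $M$ the structure of an $S$-module whose restriction to $R$ is the original one. Actually the cleanest route avoids even endowing $M$ with an $S$-structure: working inside $Q(R)$, identify $\Hom_R(M,I)$ with $I : M$ and $\Hom_R(M,S)$ with $S : M$ (using \Cref{convention_remark}, after replacing $M$ by an isomorphic regular fractional ideal — note $M$ has full support since $IM \cong M$ forces $M_P$ free on minimal primes via \Cref{I-ulrich}, so this identification is legitimate). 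Then the claim becomes $I : M = S : M$ in $Q(R)$. The inclusion $I : M \subseteq S : M$ is clear since $I \subseteq S$. For the reverse: if $q \in Q(R)$ with $qM \subseteq S$, then $qIM \subseteq IS = S \cdot I = \tr(I) \cdot \frac{I}{\text{(stuff)}}$ — more directly, $qIM \subseteq IS$, and $IS = I(I:I) = $ well, one computes $I \cdot S = I \cdot \tr$... let me instead just use $IS = \tr(I)$? No: $\tr(S) = \cond_R(S)$. The right statement is simply $IS = $ the image of $I$ under... cleanest: $IM \cong M$ so $IM = yM$ for a unit $y$ of $S$ (after field extension, $y = x^{-1}\cdot x = $...); thus $qM \subseteq S \Rightarrow qyM = qIM \subseteq IS \subseteq S$... this is circular.

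Let me therefore commit to the module-theoretic argument, which is robust. Apply $\Hom_R(M, -)$ to the inclusion $I \hookrightarrow S$ to get an injection $\Hom_R(M,I) \hookrightarrow \Hom_R(M,S)$. For surjectivity, take $f \in \Hom_R(M,S)$. Since $M$ is $I$-Ulrich, so is $\Hom_R(M,S)$ by \Cref{ulrichunderhom} (as $S \in \cm(R)$ because $S$ is a finite birational extension); hence $I \cdot \Hom_R(M,S) \cong \Hom_R(M,S)$, and more usefully $\Hom_R(M,S)$ is a module over $B(I) \subseteq S$. But I want $f(M) \subseteq I$, not just $\subseteq S$. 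Here is the key computation: for $m \in M$, write (after enlarging the residue field so $I = (x) + \cdots$ has principal reduction $x$) $IM = xM$, so $m = x^{-1}(xm)$ with $xm \in IM$; hence $f(m) = x^{-1} f(xm)$ where $f(xm) \in xf(M) \subseteq xS$. Therefore $f(M) \subseteq x^{-1}(xS) \cap S$? That just gives $S$ again. The real point: $f(IM) = I f(M) \subseteq I S = I \cdot (I:I)$, and since $I$ is regular this equals... Actually $I(I:I) \subseteq I:I = S$ need not be $I$. So the statement $\Hom_R(M,I) = \Hom_R(M,S)$ must use $S = \End_R(I)$ crucially: $f(M) \subseteq S = I:I$ means $f(M) \cdot I \subseteq I$; combined with $f(M) = f(IM)/\!\!/$...

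Here is the clean finish. We have $f(M) \subseteq S$ and want $f(M) \subseteq I$. Since $M$ is $I$-Ulrich, $M = IM$ (as submodules, after identifying $M$ with $\frac{M}{x}$ so that $B(I) M = M$ inside $Q(R)$); then $f(M) = f(IM) = I f(M)$. But $f(M) \subseteq S = \End_R(I)$, so $f(M) \cdot I \subseteq I$, i.e. $f(M) \subseteq I : I = S$ — no new info. Instead: $f(M) = I f(M) \subseteq I S \cdot$... and now use that $IS = S$ is false in general but $I \cdot f(M)$ with $f(M) \subseteq I:I$ gives $I f(M) \subseteq I \cdot (I : I)$. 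We need $I(I:I) \subseteq I$, i.e. $(I:I) \subseteq I:I$ — trivially true but says $I(I:I) \subseteq $ ... hmm $I(I:I)$: by definition $I:I = \{q : qI \subseteq I\}$, so $q \in I:I$, $qI \subseteq I$, hence $I \cdot (I:I) \subseteq I$. Yes! That is it: $I \cdot S = I \cdot (I:I) \subseteq I$. So $f(M) = I f(M) \subseteq I S \subseteq I$, giving $f \in \Hom_R(M,I)$.

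**The main obstacle**, in summary, is organizing the two facts — (a) $M$ $I$-Ulrich $\Rightarrow$ $M = IM$ as a submodule once $B(I)$ acts, via \Cref{B(I)action}/\Cref{I-ulrich}, and (b) $I \cdot \End_R(I) \subseteq I$ — into a surjectivity argument without circularity; I will need the residue-field-enlargement trick of \Cref{I-ulrich} to produce the principal reduction $x$ making $IM = xM$ precise, then observe that the equality $\Hom_R(M,I) = \Hom_R(M,S)$ is faithfully-flat-local and descends. The inclusion $\Hom_R(M,I) \subseteq \Hom_R(M,S)$ is immediate from $I \subseteq S$; the reverse uses $f(M) = If(M) \subseteq I\cdot S \subseteq I$ as above.
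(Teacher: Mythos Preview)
There is a genuine gap. Your final argument attempts to prove that the \emph{inclusion} $\Hom_R(M,I)\hookrightarrow\Hom_R(M,S)$ induced by $I\subseteq S$ is surjective, but this is false in general; the proposition only asserts an abstract isomorphism. For a concrete counterexample take $R=k[[t^3,t^4,t^5]]$, $I=\m$, so $S=\End_R(\m)=\overline R=k[[t]]$, and $M=\overline R$ (which is $\m$-Ulrich). The identity map $\id\in\Hom_R(\overline R,\overline R)$ has image $\overline R\not\subseteq\m$, so the inclusion $\Hom_R(\overline R,\m)\subsetneq\Hom_R(\overline R,\overline R)$ is strict. Nonetheless $\Hom_R(\overline R,\m)=\m:\overline R=\m=t^3\overline R\cong\overline R=\Hom_R(\overline R,\overline R)$, so the two are abstractly isomorphic, as the proposition predicts.

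The specific broken step is the claim ``$M=IM$ as submodules, after identifying $M$ with $\frac{M}{x}$''. Replacing $M$ by $M':=x^{-1}M$ gives $IM'=x^{-1}IM=x^{-1}(xM)=M=xM'$, which is still a \emph{proper} submodule of $M'$ whenever $x$ is a nonunit. What is true is $\frac{I}{x}\cdot M=M$ via the $B(I)$-action, but then the needed identity $f(\frac{I}{x}m)=\frac{I}{x}f(m)$ would require $f$ to be $B(I)$-linear, which an arbitrary $f\in\Hom_R(M,S)$ need not be. In the example above, $f=\id$ gives $f(M)=\overline R$ while $If(M)=\m\overline R=t^3\overline R\neq\overline R$, directly contradicting your equation $f(M)=If(M)$.

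The paper's proof avoids this entirely: it applies $\Hom_R(-,I)$ to the short exact sequence $0\to L\to I\otimes_R M\to IM\to 0$ (where $L$ has finite length, hence $\Hom_R(L,I)=0$), obtaining $\Hom_R(IM,I)\cong\Hom_R(I\otimes_R M,I)$; the left side is $\Hom_R(M,I)$ since $IM\cong M$, and the right side is $\Hom_R(M,\Hom_R(I,I))=\Hom_R(M,S)$ by Hom--tensor adjunction. This produces the isomorphism without ever claiming the inclusion is an equality.
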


\begin{proof}
 We have an exact sequence $\ses{L}{I\otimes M}{IM}$ where $L$ has finite length. Take $\Hom_R(-,I)$ we get an isomorphism $\Hom_R(IM, I)\cong \Hom_R(I\otimes M, I)$. The first is isomorphic to $\Hom_R(M,I)$ as $IM \cong M$, and the second is isomorphic to $\Hom_R(M,\Hom_R(I,I)) = \Hom_R(M,S)$ by Hom-tensor adjointness.
\end{proof}

\begin{cor}\label{corM*}
Let $R$ be as in \Cref{I-UlrichThm} and further assume that $R$ has a canonical ideal $\omega_R$. The following are equivalent: 
\begin{enumerate}
 
\item $M\in \ulrichcat_{\omega_R}(R)$
\item $\Hom_R(M,R)\cong \Hom_R(M,\omega_R)$. 

\end{enumerate}
\end{cor}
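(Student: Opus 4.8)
The plan is to deduce this from \Cref{endI} applied to the canonical ideal $I = \omega_R$, together with the defining property of $\omega_R$ that makes $\End_R(\omega_R) \cong R$. First I would recall that since $\omega_R$ is a canonical ideal of the one-dimensional Cohen--Macaulay local ring $R$, it is a regular ideal and $\End_R(\omega_R) = \omega_R :_{Q(R)} \omega_R = R$ (this is the standard characterization: a regular fractional ideal is canonical precisely when it is ``self-dual'' in a way that forces its endomorphism ring to be $R$; see \Cref{convention_remark}). So in the notation of \Cref{endI} we have $S = \End_R(\omega_R) \cong R$.

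For the implication $(1) \Rightarrow (2)$: assume $M \in \ulrichcat_{\omega_R}(R)$. Then \Cref{endI} with $I = \omega_R$ gives an isomorphism $\Hom_R(M,\omega_R) \cong \Hom_R(M, S) = \Hom_R(M, R) = M^*$, which is exactly $(2)$.

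For the converse $(2) \Rightarrow (1)$: this is the step I expect to require a little more care, since \Cref{endI} only runs in one direction. The cleanest route is to use the characterization of $I$-Ulrichness via trace ideals. If the residue field is infinite we can invoke \Cref{refIUl2}, but to avoid that hypothesis I would instead argue directly: suppose $\Hom_R(M,R) \cong \Hom_R(M,\omega_R)$. Both sides are maximal Cohen--Macaulay, and $\Hom_R(M,\omega_R) = \Hom_R(M, \omega_R)$ is naturally a module over $\End_R(\omega_R) = R$ — that gains nothing, so instead I would pass to $M^{\dagger} := \Hom_R(M,\omega_R)$ and use that the canonical dual is a duality on $\cm(R)$: $M \cong (M^\dagger)^\dagger = \Hom_R(\Hom_R(M,\omega_R), \omega_R)$. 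Applying $\Hom_R(-,\omega_R)$ to the given isomorphism yields $M \cong (M^*)^\dagger = \Hom_R(M^*, \omega_R)$. Now I want to conclude $\omega_R M \cong M$ (condition (1) of \Cref{I-UlrichThm}). Multiplication by $\omega_R$ gives a natural surjection $\omega_R \otimes_R M \twoheadrightarrow \omega_R M$, and the point is to show this map is, up to the finite-length kernel, controlled by the hypothesis; equivalently, by \Cref{I-ulrich} (after a harmless faithfully flat extension enlarging the residue field, which affects neither the Ulrich condition nor the isomorphism in (2) by \cite[Proposition 2.5.8]{ega1964ements}), it suffices to show $\omega_R M \subseteq xM$ for a principal reduction $x$ of $\omega_R$, and this translates into the statement that every $R$-homomorphism $M \to R$ extends the action appropriately — precisely the content of $M^* $ being stable under the $B(\omega_R)$-action, which the isomorphism $M^* \cong \Hom_R(M,\omega_R)$ forces because the right-hand side is automatically a $B(\omega_R)$-module by \Cref{B(I)action}.

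The main obstacle is thus the converse direction, and specifically making rigorous the claim that $\Hom_R(M,\omega_R)$ is an $\omega_R$-Ulrich module (hence a $B(\omega_R)$-module) essentially for free: one shows $\omega_R \cdot \Hom_R(M,\omega_R) \subseteq \Hom_R(M,\omega_R)$ viewing everything inside $\Hom_R(M, Q(R))$, and then $\omega_R \cdot \Hom_R(M,\omega_R) = \Hom_R(M, \omega_R)$ because $\omega_R$ is not principal unless $R$ is Gorenstein—wait, more carefully, because $\Hom_R(M,\omega_R)$ already carries a $B(\omega_R) = \End_R(\omega_R)$-module structure from the identity $\Hom_R(M,\omega_R) = \Hom_R(M, \Hom_R(\omega_R,\omega_R)) $ only after knowing $M$ is Ulrich, so the honest argument is: transport the $R$-module isomorphism $M^* \cong \Hom_R(M,\omega_R)$ and observe that the left side is stable under multiplication by $\omega_R : \omega_R$ inside $Q(R)$ iff so is the right side, and then a direct check using $\tr(M^*) = \tr(M)$ and \Cref{traceformula} identifies the obstruction with $\tr(M) \subseteq (x) :_R \omega_R$, which is condition (3) of \Cref{refIUl2}. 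I would present it in the infinite-residue-field case via \Cref{refIUl2} for transparency, then note the reduction to that case is standard.
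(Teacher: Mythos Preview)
Your argument for $(1)\Rightarrow(2)$ is fine and is exactly what the paper does: apply \Cref{endI} with $I=\omega_R$ and use $\End_R(\omega_R)=R$.

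For $(2)\Rightarrow(1)$ there is a genuine gap. Your eventual strategy is to feed the hypothesis into \Cref{refIUl2}, but that result requires $M\in\rf(R)$, and nothing in the statement assumes $M$ is reflexive (reflexivity is only proved \emph{after} the fact in \Cref{omegaulrich}). The auxiliary claim $\tr(M^*)=\tr(M)$ likewise needs reflexivity. Your earlier attempt, arguing that $\Hom_R(M,\omega_R)$ is automatically a $B(\omega_R)$-module, conflates $B(\omega_R)$ with $\End_R(\omega_R)$; the latter equals $R$, while the former is generally strictly larger, so that line does not go through either.

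You actually pass very close to the paper's argument and then veer away. You correctly observe $M\cong (M^*)^{\vee}$ by canonical duality applied to the hypothesis. The missing step is to identify $M^*$ itself: since $R\cong\End_R(\omega_R)$, Hom--tensor adjointness gives
\[
M^*=\Hom_R(M,R)\cong\Hom_R(M,\Hom_R(\omega_R,\omega_R))\cong\Hom_R(M\otimes_R\omega_R,\omega_R)\cong\Hom_R(\omega_R M,\omega_R)=(\omega_R M)^{\vee},
\]
the last isomorphism because the kernel of $M\otimes_R\omega_R\twoheadrightarrow\omega_R M$ is torsion and $\omega_R$ is torsion-free. Thus statement $(2)$ reads $(\omega_R M)^{\vee}\cong M^{\vee}$, and applying $(-)^{\vee}$ once more gives $\omega_R M\cong M$, which is $\omega_R$-Ulrichness by \Cref{I-UlrichThm}. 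This is the paper's proof; it avoids any reflexivity hypothesis, any residue-field enlargement, and any appeal to trace ideals.
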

\begin{proof}
$(1)$ implies $(2)$ by \Cref{endI}. Conversely, note that $R\cong \End_R(\omega_R)$. Hence, using Hom-Tensor adjointness, statement $(2)$ is the same as $\Hom_R(\omega_RM,\omega_R)\cong \Hom_R(M,\omega_R)$. Hence dualizing with respect to $\omega_R$ and using \Cref{I-UlrichThm} finishes the proof.
\end{proof}

\begin{cor}\label{omegaulrich}
Let $R$ be as in \Cref{I-UlrichThm}. Assume that $R$ has a canonical ideal $\omega_R $ and $M\in \ulrichcat_{\omega_R}(R)$. Then $M$ is reflexive. 
\end{cor}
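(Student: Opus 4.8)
The plan is to deduce reflexivity of an $\omega_R$-Ulrich module $M$ from Corollary~\ref{corM*} together with a standard fact about canonical duality, namely that dualizing into the canonical ideal is an exact duality on $\cm(R)$. Concretely, write $(-)^{\vee} := \Hom_R(-,\omega_R)$. Since $R$ is one-dimensional Cohen--Macaulay with canonical ideal $\omega_R$, the functor $(-)^{\vee}$ restricts to a self-duality on $\cm(R)$: for every $N \in \cm(R)$ the module $N^{\vee}$ is again maximal Cohen--Macaulay and the natural biduality map $N \to N^{\vee\vee}$ is an isomorphism. This is classical (it is the one-dimensional case of the duality given by a canonical module, see e.g. \cite[Theorem 3.3.10]{bruns_herzog_1998}).

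First I would record that $M \in \ulrichcat_{\omega_R}(R)$ forces $M$ to be torsionless, equivalently that the biduality map $M \to M^{**}$ is injective; this holds because $M \in \cm(R)$ and $R$ is generically Gorenstein-or-rather, more simply, because an $\omega_R$-Ulrich module satisfies $\omega_R M \cong M$ by \Cref{I-UlrichThm}, so $M$ is isomorphic to a submodule $\omega_R M$ of $Q(R)$ up to the relevant identifications, hence torsion-free, and a torsion-free module over a one-dimensional Cohen--Macaulay ring is torsionless. Next, by \Cref{corM*} we have $M^* = \Hom_R(M,R) \cong \Hom_R(M,\omega_R) = M^{\vee}$. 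Now I would dualize this isomorphism once more, but using $(-)^{\vee}$ on the right-hand side: applying $\Hom_R(-,\omega_R)$ to $M^* \cong M^{\vee}$ gives $(M^*)^{\vee} \cong (M^{\vee})^{\vee} \cong M$, where the last isomorphism is canonical biduality. On the other hand $(M^*)^{\vee} = \Hom_R(M^*, \omega_R) \cong \Hom_R(M^*, R) = M^{**}$, again by one more application of \Cref{corM*} (this time to the $\omega_R$-Ulrich module $M^*$; note $M^*$ is $\omega_R$-Ulrich because $M$ is, by \Cref{ulrichunderhom} applied with $N = R$). Chasing these isomorphisms shows $M \cong M^{**}$, and a short check that the composite is the natural biduality map finishes the argument.

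The main obstacle I anticipate is \emph{not} the existence of the abstract isomorphism $M \cong M^{**}$ but verifying that it is \emph{the canonical map} $h\colon M \to M^{**}$ — an isomorphism between $M$ and $M^{**}$ does not by itself make $M$ reflexive. So the careful part is to track naturality: one should check that the isomorphism $\Hom_R(M,R)\cong\Hom_R(M,\omega_R)$ produced by \Cref{endI}/\Cref{corM*} is compatible with the evaluation/biduality maps, or else argue more structurally. A cleaner route that avoids this bookkeeping, which I would actually prefer to write: since $M$ is $\omega_R$-Ulrich, $M$ is a module over $\End_R(\omega_R) \cong R$ trivially, but more to the point $\omega_R M \cong M$, and one can transport the canonical biduality isomorphism $\omega_R M \to \Hom_R(\Hom_R(\omega_R M,\omega_R),\omega_R)$ through the identifications $\Hom_R(\omega_R M,\omega_R)\cong\Hom_R(M,\omega_R)\cong\Hom_R(M,R)$ (the first by Hom-tensor adjointness as in \Cref{corM*}, the second by \Cref{corM*} itself) to identify the canonical biduality map of $M$ with the canonical biduality map of $\omega_R M$ into its $\omega_R$-bidual, which is an isomorphism by canonical duality. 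Thus reflexivity of $M$ over $R$ is equivalent to $\omega_R$-reflexivity of $M$, and the latter is automatic in $\cm(R)$.

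Finally, I would remark (for the writeup) that this result, combined with \Cref{faber} and the lattice results, is what makes $\ulrichcat_{\omega_R}(R)$ the engine behind Theorem~\ref{t1}: every maximal Cohen--Macaulay module over an $\omega_R$-Ulrich birational extension $S$ is an $S$-module hence (being $\omega_R$-Ulrich over $R$ by \Cref{I-UlrichThm} once $\omega_R S \cong S$) is $R$-reflexive.
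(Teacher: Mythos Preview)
Your main argument is essentially identical to the paper's: apply \Cref{corM*} to get $M^*\cong M^{\vee}$, note $M^*\in\ulrichcat_{\omega_R}(R)$ by \Cref{ulrichunderhom}, and chain $M^{**}\cong (M^*)^{\vee}\cong (M^{\vee})^{\vee}\cong M$. The paper simply writes ``as desired'' after obtaining this abstract isomorphism and does not pause over naturality.

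Your worry about whether the composite is the canonical map $h\colon M\to M^{**}$ is more scrupulous than the paper, but it is not actually an obstacle here: since $R$ has a canonical ideal it is generically Gorenstein, so by \Cref{reflem}(2) (with $N=R$) any $R$-dual of a module in $\cm(R)$ is reflexive; in particular $M^{**}=(M^*)^*$ is reflexive. Reflexivity is invariant under isomorphism (if $\phi\colon M\xrightarrow{\sim} N$ then $h_M=(\phi^{**})^{-1}\circ h_N\circ\phi$), so the abstract isomorphism $M\cong M^{**}$ already forces $h_M$ to be an isomorphism. Thus the ``short check'' you anticipate is a one-liner, and your alternative route through transporting the $\omega_R$-biduality, while correct in spirit, is unnecessary. (A small slip: your argument that $M$ is torsionless via ``$\omega_R M$ is a submodule of $Q(R)$'' only makes sense when $M$ is a fractional ideal; for general $M\in\cm(R)$ one simply uses that $R$ is generically Gorenstein.)
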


\begin{proof}
 \Cref{corM*} implies that $M^*\cong M^{\vee}$, where $M^*=\Hom_R(M,R)$ and $M^{\vee}=\Hom_R(M,\omega_R)$. By \Cref{ulrichunderhom}, $M^*$ is still in $\ulrichcat_{\omega_R}(R)$, so we have $M^{**}\cong M^{*\vee}\cong M^{\vee\vee}\cong M$, as desired. 
\end{proof}

\begin{cor}
Let $R$ be as in \Cref{I-UlrichThm}. Suppose that $R$ has a canonical ideal $\omega_R$. Then for large enough $n$, the ideal $I=\omega_R^n$ is reflexive and satisfies $I^* \cong I^{\vee}$.
\end{cor}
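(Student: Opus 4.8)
The plan is to recognize this as an immediate consequence of the preceding corollaries applied to the ideal $I = \omega_R^n$, using the fact that $B(\omega_R) = B(\omega_R^n)$ so that the blow-up algebra stabilizes. First I would invoke Example~\ref{ulrichex}: since $\ell(\omega_R^n M / \omega_R^{n+1} M) = e_{\omega_R}(M)$ for $n \gg 0$ applied to $M = \omega_R$ itself, the ideal $\omega_R^n$ is $\omega_R$-Ulrich for all large $n$; equivalently, by \Cref{I-UlrichThm}, $\omega_R^n \in \cm(B(\omega_R))$ once $n$ is large, since $\omega_R \cdot \omega_R^n \cong \omega_R^n$. The point is that $\omega_R^n$ is an $\omega_R$-Ulrich module for large $n$.

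Next I would set $I = \omega_R^n$ for such $n$ and observe that $I = \omega_R^n$ is itself $\omega_R$-Ulrich, hence by \Cref{omegaulrich} (applied with $M = I$) the ideal $I$ is reflexive. For the identification $I^* \cong I^{\vee}$, I would apply \Cref{corM*}: since $I \in \ulrichcat_{\omega_R}(R)$, we have $\Hom_R(I,R) \cong \Hom_R(I,\omega_R)$, which is precisely $I^* \cong I^{\vee}$. So both claims follow by specializing $M = \omega_R^n$ in results already proved.

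The only genuine content to check is that $\omega_R^n$ actually lies in $\ulrichcat_{\omega_R}(R)$ for large $n$, i.e. that $\omega_R \cdot \omega_R^n \cong \omega_R^n$ as $R$-modules. This is where I would be slightly careful: one wants $\omega_R^{n+1} \cong \omega_R^n$, which holds because after enlarging the residue field $\omega_R$ has a principal reduction $x$, so $\omega_R^{n+1} = x\omega_R^n$ for $n \gg 0$ and multiplication by $x$ gives the isomorphism $\omega_R^n \xrightarrow{\sim} \omega_R^{n+1} = \omega_R \cdot \omega_R^n$ (the isomorphism type being insensitive to the faithfully flat extension, as in the proof of \Cref{I-UlrichThm}). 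Alternatively, this is exactly the assertion of \Cref{ulrichex} with $M = \omega_R$ and $I = \omega_R$. I do not anticipate any real obstacle here; the statement is essentially a corollary collecting \Cref{ulrichex}, \Cref{omegaulrich}, and \Cref{corM*}.

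\begin{proof}
By \Cref{ulrichex} applied with $M = \omega_R$ and $I = \omega_R$, the ideal $I := \omega_R^n$ is $\omega_R$-Ulrich for all $n \gg 0$; fix such an $n$. By \Cref{omegaulrich} (with $M = I$), $I$ is reflexive. By \Cref{corM*} (with $M = I$), the condition $I \in \ulrichcat_{\omega_R}(R)$ gives $I^* = \Hom_R(I,R) \cong \Hom_R(I,\omega_R) = I^{\vee}$.
\end{proof}
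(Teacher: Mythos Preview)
Your proposal is correct and matches the paper's implicit argument: the corollary is stated without proof precisely because it follows at once from \Cref{ulrichex}, \Cref{corM*}, and \Cref{omegaulrich} in the way you describe. One cosmetic point: in invoking \Cref{ulrichex} it is slightly cleaner to take $M=R$ (so that $I^nM=\omega_R^n$ directly) rather than $M=\omega_R$, but of course either choice yields that $\omega_R^n\in\ulrichcat_{\omega_R}(R)$ for $n\gg 0$.
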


 We end this section by looking into the question when the maximal ideal $\m$ is $I$-Ulrich. This will be applied when we discuss almost Gorenstein rings in the last section. 
 
 \begin{prop}\label{propalmostGor}
 Let $(R,\m,k)$ be as in \Cref{I-UlrichThm}. Suppose there is an exact sequence $\ses{R}{I}{k^{{\oplus}s}}$. Then $\m$ is $I$-Ulrich.
 \end{prop}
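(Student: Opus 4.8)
The plan is to use the criterion from \Cref{I-ulrich} (after enlarging the residue field if necessary, which is harmless since the $I$-Ulrich condition is stable under faithfully flat local extensions by \Cref{I-UlrichThm}): it suffices to show that $\m I \subseteq x\m$ for some principal reduction $x$ of $I$, equivalently that $\m \in \cm(B(I))$, equivalently $\frac{I}{x}\m \subseteq \m$. So first I would reduce to the infinite residue field case and fix a principal reduction $x$ of $I$; then the goal becomes: $\m$ is stable under multiplication by $B(I) = R[\frac{I}{x}]$, i.e. $B(I)\m = \m$, i.e. $B(I) \subseteq \m :_{Q(R)} \m = \End_R(\m)$.

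The key input is the hypothesis: a short exact sequence $\ses{R}{I}{k^{\oplus s}}$. This says that $R \subseteq I$ (so we may regard $I$ as a fractional ideal containing $R$) with $\ell(I/R) = s$, and moreover $\m I \subseteq R$: indeed since $I/R \cong k^{\oplus s}$ is killed by $\m$, we get $\m I \subseteq R$. Now I would argue that this forces $\m I \subseteq \m$ as well. Since $x \in I$ is a nonzerodivisor and $x \in \m$ (it lies in $I$ but $I$ is a proper ideal of $R$ when $R$ is not a DVR; in the DVR case $I$ is principal and the statement is trivial), we have $\m x \subseteq \m$, and more generally for $a \in I$, $\m a \subseteq R$ by the above; to see $\m a \subseteq \m$ note that if $\m a \ni u$ a unit of $R$ for some choice, then $a$ would be a unit times something in $\m^{-1}$... — more cleanly: $\m I \subseteq R$ is an ideal, and $\m \cdot \m I \subseteq \m R = \m$, but actually the cleanest route is: $\m I$ is an ideal of $R$ contained in $R$; if $\m I = R$ then $I$ contains $\m^{-1} = \End_R(\m) \cdot(\text{something})$, which combined with $\ell(I/R) = s < \infty$ and standard length bounds gives a contradiction unless forced — I will instead simply observe $\m I \subseteq \tr(\m) \cap (\text{stuff})$; \textbf{the cleanest correct argument}: from $\m I \subseteq R$ we get $I \subseteq \m^{-1} = R:\m$, hence $\frac{I}{x} \subseteq \frac{\m^{-1}}{x}$, and since $x\in \m$, $\frac{1}{x}\m^{-1}\supseteq \m^{-1}$... this needs care.

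Let me restate the core step I expect to be the main obstacle: showing $B(I) \subseteq \End_R(\m)$, equivalently $\frac{I}{x}\m \subseteq \m$. Granting $\m I \subseteq R$ (immediate from the exact sequence) I would argue: $\frac{I}{x}\m \subseteq \frac{1}{x}R \cap B(I)\cdot(\text{fraction ring})$, and since $\frac{I}{x}\m = \frac{\m I}{x}$ with $\m I \subseteq R$, this lies in $\frac{R}{x}$; but it is also an $R$-submodule of $Q(R)$ that is integral over $R$ (being inside $B(I)$), hence inside $\overline R$, so inside $\overline R \cap \frac{1}{x}R$. To pin it inside $\m$ rather than merely $R$, I would use that $\m I \subseteq R$ combined with $\m(\m I)\subseteq \m$: since $\m \cdot \frac{\m I}{x} = \frac{\m(\m I)}{x} \subseteq \frac{\m}{x}$ and... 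Honestly the slick finish is: $\m I \subseteq R$ means $\m \subseteq R:I = I^{-1}$, so $\m \cdot \frac{I}{x} \subseteq \frac{I^{-1}I}{x} = \frac{\tr(I)}{x}$; and since $x$ is a reduction with $x \in I$, $\tr(I) \supseteq (x):_R I$ but also $\frac{\tr(I)}{x} \cap \overline R$ behaves well — at which point I would invoke \Cref{traceformula} to get $\frac{\tr(I)}{x} = \frac{I((x):_RI)}{x^2}$ and conclude the needed containment $\m\cdot\frac{I}{x}\subseteq\m$ by a direct length/degree count using $\ell(I/R)=s$. The main obstacle is exactly this last containment; everything before it is formal manipulation of the defining exact sequence and the blow-up algebra, and I would expect the resolution to hinge on the observation $\m \subseteq I^{-1}$ together with $x \in \m \cap I$ being a common reduction of $\m$ and $I$.
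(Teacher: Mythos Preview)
Your proposal circles the right target but never lands the key step, and it is also tangled by an unnecessary detour through a separate principal reduction $x$ and the blow-up $B(I)$.

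The exact sequence $\ses{R}{I}{k^{\oplus s}}$ says precisely that there is a regular element $a\in I$ (the image of $1$) with $I/(a)\cong k^{\oplus s}$, i.e.\ $\m I\subseteq (a)$. That element $a$ is all you need; there is no reason to pass to a different reduction $x$, and in fact you never verify that your $x$ satisfies $\m I\subseteq (x)$, which is what the argument actually requires. Your repeated switching between ``$I$ is a proper ideal of $R$'' and ``$R\subseteq I$ as a fractional ideal'' (the second being $I/a$) also causes confusion; pick one framing and stay in it.

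The genuine gap is the step you yourself flag as the ``main obstacle'': upgrading $\m I\subseteq (a)$ to $\m I\subseteq a\m$. You try trace formulas and length counts, but the missing observation is much simpler. When $R$ is not regular, $\m$ is a trace ideal (\Cref{c&m}), so $R:\m=\m:\m$ by \Cref{traceproperties}(2). In your fractional-ideal framing this says $\m^{-1}=\End_R(\m)$, hence from $I/a\subseteq \m^{-1}$ you immediately get $\m\cdot (I/a)\subseteq \m\cdot(\m:\m)\subseteq \m$, i.e.\ $\m I\subseteq a\m$, and then $I\m=a\m\cong \m$ shows $\m$ is $I$-Ulrich via \Cref{I-UlrichThm}(1).

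The paper's proof makes the same point from the other side: since $a\m\subseteq I\m\subseteq (a)$ and $\ell((a)/a\m)=1$, the only way $I\m\neq a\m$ is $I\m=(a)$; but then $(a):(a)=R$ would contain $\m:\m$, forcing $\m:\m=R$ and hence $\m\cong R$, contradicting non-regularity. Either route works, and both hinge on exactly the fact you did not invoke: $\m:\m\supsetneq R$ (equivalently $\m^{-1}=\m:\m$) when $R$ is not a DVR.
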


\begin{proof}
We can assume that $R$ is not regular, for if $R$ is regular the conclusion follows easily. The assumption is equivalent to $I\m\subset (a)$ for some $a\in I$. We need to show that $I\m=a\m$. As $a\m \subset I\m \subset (a)$ and $\ell((a)/a\m)=1$, it is enough to show that $I\m$ is not equal to $(a)$. Suppose that $I\m=(a)$. Then $E=I\m: I\m = (a):(a) = R$. As $\m:\m \subset E$, it follows that $\m:\m=R$. But $\m:\m \cong \m^*$, and as $\m$ is reflexive, it follows that $\m\cong R$, which is impossible if $R$ is not regular.         
\end{proof}

\section{Applications: Strongly reflexive ring extensions}\label{sre}

Throughout this section, we assume that $(R,\m,k)$ is a one-dimensional Cohen-Macaulay local ring. Suppose that $R$ has a canonical ideal $\omega_R$. In this section we are interested in the following question. Let $S$ be a finite extension of $R$. When is any CM $S$-module $R$-reflexive? Of course if $S=R$, this is equivalent to $R$ being Gorenstein, or $\omega_R\cong R$. It turns out that there is a pleasant generalization to any finite extension $S$ that is in $\cm(R)$: $\cm(S)\subset \rf(R)$ if and only if $\omega_RS\cong S$, in other words $S$ is $\omega_R$-Ulrich. 

We start with a useful lemma that will be used repeatedly in the proof of our main theorem. 
\begin{lem}\label{5.1helplemma}
Let $(R,\m,k)$ be a one-dimensional Cohen-Macaulay local ring. Let $S$ be a module finite $R$-algebra such that $S\in \cm(R)$. Let $M\in \cm(S)$.
\begin{enumerate}
\item The map $$F: M\mapsto \Hom_R(M,R)$$ is an $S$-linear, contravariant functor from $\cm(S)$ to $\cm(S)$. 

\item If $M\in \rf(R)$, then $M\cong F(F(M))$ in $\cm(S)$. 

\end{enumerate}
\end{lem}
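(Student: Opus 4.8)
The plan is to verify that $F(M) = \Hom_R(M,R)$ lands in $\cm(S)$ and is functorial and $S$-linear, then to prove the biduality statement $M \cong F(F(M))$ for reflexive $M$ by identifying $F\circ F$ with the $R$-bidual functor $(-)^{**}$ restricted to $S$-modules. First I would observe that for $M \in \cm(S)$, the $S$-module structure on $\Hom_R(M,R)$ is the usual one: given $s \in S$ and $\phi \in \Hom_R(M,R)$, set $(s\phi)(m) = \phi(sm)$; this is well-defined and $R$-linear since $S$ acts $R$-linearly on $M$, and it makes $F$ a contravariant $S$-linear functor because precomposition with an $S$-linear map $M \to N$ respects this action. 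That $F(M) \in \cm(R)$ is standard: $M \in \cm(R)$ (as $M \in \cm(S)$ and $S \in \cm(R)$, depth counted over $R$ agrees, and $\dim_R M = 1$), and over a one-dimensional CM local ring $\Hom_R(M,R) = M^*$ of a maximal Cohen-Macaulay module is again maximal Cohen-Macaulay (it is torsionless hence has positive depth, and it is finitely generated of dimension one). Then $F(M) \in \cm(R)$ and carries an $S$-module structure, so $F(M) \in \cm(S)$; this proves (1).

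For (2), the key point is that the $S$-module $F(F(M)) = \Hom_R(\Hom_R(M,R),R) = M^{**}$ is literally the $R$-bidual of $M$, and the natural biduality map $h\colon M \to M^{**}$ is $S$-linear (it is natural, and the $S$-action on both sides is the one induced from the $R$-linear $S$-action on $M$). Hence if $M \in \rf(R)$, then $h$ is an isomorphism of $R$-modules that is also $S$-linear, hence an isomorphism in $\cm(S)$. This gives $M \cong F(F(M))$ in $\cm(S)$, as claimed.

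The only mild subtlety — not really an obstacle — is bookkeeping around the two $S$-structures on $\Hom_R(M,R)$ (one from the source, one from the target $R$, the latter being trivial since $S$ does not act on $R$); one must make sure that when $F$ is applied twice the composite agrees on the nose with $(-)^{**}$ as $S$-modules, so that the natural transformation $h$ is genuinely a morphism in $\cm(S)$ and not merely in $\cm(R)$. This is a routine check using naturality of $h$ and the fact that all the relevant maps are $R$-linear and $S$-equivariant. I expect the proof to be short, with the main content being the correct identification of the $S$-action and the appeal to $M \in \rf(R)$.
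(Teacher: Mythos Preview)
Your proposal is correct and follows essentially the same approach as the paper: define the $S$-action on $\Hom_R(M,R)$ by $(s\cdot f)(m)=f(sm)$, observe this makes $F$ an $S$-linear contravariant functor into $\cm(S)$, and for (2) note that the canonical biduality map $M\to M^{**}$ is $S$-linear with respect to this action. The paper's proof is more terse---it does not spell out why $F(M)\in\cm(R)$ or the bookkeeping you mention---but the content is identical.
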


\begin{proof}
Note that $\Hom_R(M,R)$ is naturally an $S$-module via the action $$(s\cdot f)(m):=f(sm),\hspace{2em} s\in S,m\in M,f\in \Hom_R(M,R)$$ and that this extends the action of $R$. Thus, the conclusion
of part $(1)$ follows. For part $(2)$, notice that the canonical $R$-linear map $M\to M^{**}$ is $S$-linear with respect to the action above. 
\end{proof}

\begin{thm}\label{totallyreflexive}
Let $(R,\m,k)$ be a one-dimensional Cohen-Macaulay local ring. Assume that $R$ has a canonical ideal $\omega_R$. Let $S$ be a module finite $R$-algebra such that $S\in \cm(R)$. The following are equivalent:
\begin{enumerate}
 \item $\cm(S)\subset \rf(R)$. 
 \item $\omega_S \in \rf(R)$.
 \item $\omega_S \cong S^*$. 
 \item $\omega_RS\cong S$.
  \item $S$ is $\omega_R$-Ulrich.
 \item $S$ is reflexive and $\tr(S)\subset b(\omega_R)$.
\end{enumerate} 

\end{thm}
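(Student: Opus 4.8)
The plan is to prove the cycle of implications $(1) \Rightarrow (2) \Rightarrow (3) \Rightarrow (4) \Rightarrow (5) \Rightarrow (6) \Rightarrow (1)$, leaning heavily on the machinery developed in Section \ref{secIulrich}. The easy observations first: $(2) \Rightarrow (1)$ will actually be the substantive direction and I'll want to be careful there, but the cheap links are $(4) \iff (5)$ and $(5) \Rightarrow (6)$. Indeed $(4) \iff (5)$ is precisely \Cref{I-UlrichThm}(1)$\iff$(2) applied with $I = \omega_R$ and $M = S$ (noting $\omega_R S$ is the image of the natural surjection, and $\omega_R S \cong S$ is the $IM \cong M$ condition). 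For $(5) \Rightarrow (6)$: $S$ is reflexive by \Cref{omegaulrich} since $S \in \ulrichcat_{\omega_R}(R)$, and $\tr(S) \subseteq b(\omega_R)$ follows from \Cref{refIUl1}. Conversely $(6) \Rightarrow (5)$ is the other half of \Cref{refIUl1} (the converse holds for reflexive modules). So $(4), (5), (6)$ are quickly seen to be equivalent; the heart of the theorem is connecting this cluster to $(1)$, $(2)$, $(3)$.

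For $(5) \Rightarrow (3)$: if $S$ is $\omega_R$-Ulrich, then by \Cref{corM*} (or directly \Cref{endI} with $I = \omega_R$, using $\End_R(\omega_R) = R$) we get $\Hom_R(S,R) \cong \Hom_R(S,\omega_R)$, i.e. $S^* \cong \omega_S$, since $\omega_S = \Hom_R(S,\omega_R)$ is the canonical module of $S$ (standard, as $S \in \cm(R)$ and $\omega_R$ is a canonical ideal of $R$). Then $(3) \Rightarrow (2)$: $S^*$ is always reflexive here — $S^* = \cond_R(S)$-type reasoning, or more simply $S$ is torsionless so $S^*$ is a dual module hence reflexive over an $(S_1)$ ring; combined with \Cref{reflem}(2) or \Cref{goto2020} this gives $\omega_S = S^* \in \rf(R)$.

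The main obstacle is $(2) \Rightarrow (1)$: from "$\omega_S$ is $R$-reflexive" deduce that \emph{every} $M \in \cm(S)$ is $R$-reflexive. Here I would use \Cref{5.1helplemma}: the functor $F(-) = \Hom_R(-,R)$ sends $\cm(S)$ to $\cm(S)$, and $M$ is $R$-reflexive iff the biduality map $M \to F(F(M))$ is an isomorphism. The key is that over the Gorenstein-like situation, $F$ composed with $\Hom_S(-,\omega_S)$ should be related to $R$-duality. Concretely: for $M \in \cm(S)$, I expect $\Hom_R(M,R) \cong \Hom_S(M, \Hom_R(S,R)) = \Hom_S(M, S^*)$ by Hom-tensor adjunction. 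If $\omega_S \cong S^*$ (which $(2)$ should yield via $(2)\Rightarrow(3)$ — note I may want to reorder and prove $(2)\Rightarrow(3)$ as well, e.g. $\omega_S \in \rf(R)$ forces $\omega_S \cong \omega_S^{**} \cong$ something identifiable, or argue that $\omega_S$ reflexive plus $\omega_S = \Hom_R(S,\omega_R)$ forces $\omega_R S \cong S$ by dualizing), then $\Hom_R(M,R) \cong \Hom_S(M,\omega_S) = M^{\vee_S}$, the $S$-canonical dual. Since $M \in \cm(S)$ and $S$ is Cohen-Macaulay of dimension one with canonical module $\omega_S$, the module $M^{\vee_S}$ is again MCM over $S$ and $M \cong M^{\vee_S \vee_S}$ canonically. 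Unwinding, $F(F(M)) \cong M^{\vee_S \vee_S} \cong M$ via the natural map, giving $R$-reflexivity. The delicate point to nail down is that the composite isomorphism $F(F(M)) \cong M$ is indeed the biduality map and not merely an abstract isomorphism — this is exactly what \Cref{5.1helplemma}(2)-style bookkeeping (tracking $S$-linearity of the canonical maps) is designed to handle, so I would invoke it carefully, perhaps first establishing $(2) \Rightarrow (4)$ directly so that $\omega_R S \cong S$ is available and then running the duality argument cleanly.
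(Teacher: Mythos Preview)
Your handling of the cluster $(4)\Leftrightarrow(5)\Leftrightarrow(6)$, of $(5)\Rightarrow(3)$ via \Cref{corM*}, of $(3)\Rightarrow(2)$ via the reflexivity of $R$-duals (\Cref{reflem}), and of the trivial $(1)\Rightarrow(2)$ is correct and matches the paper. The genuine gap is the passage from $(2)$ back into $\{(3),(4),(5),(6)\}$: your cycle $(1)\Rightarrow(2)\Rightarrow(3)\Rightarrow\cdots$ needs $(2)\Rightarrow(3)$, and you only wave at it (``$\omega_S\cong\omega_S^{**}\cong$ something identifiable'', ``dualizing''). Your proposed $(2)\Rightarrow(1)$ argument also consumes $(3)$ as input, so it cannot stand in for the missing link. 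Neither of your suggested routes to $(2)\Rightarrow(3)$ or $(2)\Rightarrow(4)$ works as stated: knowing merely that $\omega_S=\Hom_R(S,\omega_R)$ is $R$-reflexive does not, by any formal duality trick alone, force $\omega_RS\cong S$ or $\omega_S\cong S^*$.

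The paper supplies two independent arguments here. First, it proves $(2)\Rightarrow(1)$ \emph{directly}, without $(3)$: for $M\in\cm(S)$, take a free $S$-cover $S^n\twoheadrightarrow M^\vee$ where $M^\vee=\Hom_R(M,\omega_R)$, apply $\Hom_R(-,\omega_R)$, and obtain $0\to M\to\omega_S^n\to N\to 0$ in $\cm(R)$; since $\omega_S^n$ is reflexive by hypothesis and $N$ is torsionless, \Cref{reflem}(1) gives $M\in\rf(R)$. This sidesteps your circularity entirely. Second, the paper proves $(2)\Rightarrow(3)$ by a splitting argument: take a free $S$-cover of $\omega_S^*$, apply $\Hom_R(-,R)$ (using \Cref{5.1helplemma}), and get $0\to\omega_S\to(S^n)^*\to N\to 0$ in $\cm(S)$; this splits because $\Ext^1_S(N,\omega_S)=0$, so $\omega_S$ is an $S$-summand of $(S^*)^n$. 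Using $(2)\Rightarrow(1)$ to know $S\in\rf(R)$, dualize again to see $\omega_S^*$ is an $S$-summand of $S^n$, hence $S$-projective; as $S$ is semilocal of constant rank one, $\omega_S^*\cong S$, whence $\omega_S\cong\omega_S^{**}\cong S^*$. This is the substantive step your proposal is missing.

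A minor remark: your worry that the isomorphism $M\cong F(F(M))$ might be ``merely abstract'' is not a real obstruction. Once you know $M\cong M^{**}$ abstractly, note that $M^{**}$ is always reflexive (proof of \Cref{reflem}(2)), and reflexivity is an isomorphism-invariant property; so the natural map $M\to M^{**}$ is forced to be an isomorphism.
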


\begin{proof}
Clearly (1) implies (2). For the converse, assume that $\omega_S\in \rf(R)$. Take $M \in \cm(S)$. Then $M^{\vee} = \Hom_R(M, \omega_R) \in \cm(S)$. Take a free $S$-cover of $M^{\vee}$ and apply $^{\vee}$, we obtain an exact sequence  $\ses{M}{(S^n)^{\vee}}{N}$ in $\cm(R)$. Thus $M \in \rf(R)$ by  \Cref{reflem}.

That (3) implies (2) follows by \Cref{reflem}. 
For the converse, assume that $\omega_S \in \rf(R)$. Take a free $S$-cover of $\omega_S^*$ and apply $^*$. From \Cref{5.1helplemma} we get an exact sequence in $\cm(S)$: $\ses{\omega_S}{(S^n)^*}{N}$. (Note that $N$ is in $\cm(S)$ as it is a submodule of a torsionfree $R$-module and also has an $S$-module structure.)  This has to split in $\cm(S)$ (since $\Ext^1_S(N,\omega_S)=0$), so that $\omega_S$ is a direct summand of $(S^n)^*$ in $\cm(S)$. Since $S\in \rf(R)$ by (2) implies (1), $\omega_S^*$ is a direct summand of $S^n$ in $\cm(S)$ using \Cref{5.1helplemma}$(2)$. Thus $\omega_S^*$ is $S$-projective. Since $S$ is semi-local and $\omega_S^*$ is locally free of constant rank, it is $S$-free of rank one \cite[\href{https://stacks.math.columbia.edu/tag/02M9}{Tag 02M9}]{stacks-project}. Now applying \Cref{5.1helplemma}$(1)$ one gets that $\omega_S^{**}$ is isomorphic to $S^*$ as $S$-modules, so as $R$-modules as well and hence $(3)$ follows. 

The equivalence of (3), (4) and (5) follows from \Cref{I-UlrichThm} and \Cref{corM*}. The equivalence of (5) and (6) follows from \Cref{refIUl2} and \Cref{omegaulrich}
\end{proof}

\begin{dfn}\label{dfnstrong}
We shall call an extension of $R$ satisfying the equivalent conditions of  \Cref{totallyreflexive} a {\it strongly reflexive} extension. 
\end{dfn}

\begin{rem}
The notions of reflexive extensions and totally reflexive extensions, over not necessarily commutative rings, have been defined and studied by X. Chen in \cite[Definition 2.3, 3.3]{chen2013}. They are related to but not the same as ours. For instance, a reflexive extension in Chen's notion would require $S\in \rf(R)$ and $\Hom_R(S,R)\cong S$, and would imply $\rf(S)\subset \rf(R)$.  
\end{rem}

Strongly reflexive birational extensions satisfy even more interesting characterizations.

\begin{thm}\label{refthm}Let $(R,\m,k)$ be a one-dimensional Cohen-Macaulay local ring. Let $S\in \br(R)$. Assume $R$ has a canonical ideal $\omega_R$ admitting a principal reduction. Let $a$ be an arbitrary principal reduction of $\omega_R$ and set $K:=a^{-1}\omega_R$ (see \Cref{rm1}). 
The following are equivalent:
\begin{enumerate}
 \item $\cm(S)\subset \rf(R)$. 
 \item $\omega_S \in \rf(R)$.
 \item $\omega_S \cong S^*$. 
 \item $S$ is $\omega_R$-Ulrich.
 \item $S=KS$.
 \item $K\subseteq S$.
 \item $S\in \rf(R)$ and $\cond_R(S)= \tr(S) \subseteq R:K=(a):\omega_R$. 
 \item $S\in \rf(R)$ and $\cond_R(S)= \tr(S) \subseteq \core(\omega_R):_R \omega_R$ (assuming the residue field is infinite). 
 
\end{enumerate} 

\end{thm}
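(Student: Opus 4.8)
The plan is to use \Cref{totallyreflexive} as the backbone. First I would note that since $S\in\br(R)$ is a finite $R$-submodule of $Q(R)$ it is torsion-free, hence $S\in\cm(R)$, so \Cref{totallyreflexive} applies verbatim: its conditions (1), (2), (3), (5) are exactly (1), (2), (3), (4) here (matching ``$\omega_S\cong S^{*}$'' with ``$S$ is $\omega_R$-Ulrich'' via \Cref{corM*}), so these four are already equivalent, and each is also equivalent to $\omega_R S\cong S$ and to ``$S\in\rf(R)$ and $\tr(S)\subseteq b(\omega_R)$''. The remaining work is to attach (5)--(8) to this list; throughout I would use that $a\in\omega_R$ (so that $1\in K$ and $R\subseteq K$) and the isomorphism $K\cong\omega_R$ of $R$-modules.

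Then I would handle (4)$\Leftrightarrow$(5)$\Leftrightarrow$(6). By \Cref{I-ulrich} applied with $M=S$, $I=\omega_R$, $x=a$, ``$S$ is $\omega_R$-Ulrich'' is equivalent to $\omega_R S=aS$, and multiplying by the unit $a^{-1}$ of $Q(R)$ this becomes $KS=S$, which is (5). For (5)$\Leftrightarrow$(6): if $K\subseteq S$ then, $S$ being a ring containing $1\in K$, we get $S=1\cdot S\subseteq KS\subseteq S\cdot S=S$, forcing $KS=S$; conversely $K=K\cdot 1\subseteq KS=S$.

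Next, for (4)$\Leftrightarrow$(7) I would first record two identities. For any finite birational $S$ one has $\tr(S)=\cond_R(S)$: by \Cref{traceproperties}(1), $\tr(S)=(R:S)S$, and $R:S=\cond_R(S)$ is an ideal of $S$, so $(R:S)S=R:S$; also $R:K=(a):\omega_R=(a):_R\omega_R$ as fractional ideals (the last equality because $a\in\omega_R$). Then (4)$\Rightarrow$(7): if $S$ is $\omega_R$-Ulrich it is reflexive by \Cref{omegaulrich}, and $\cond_R(S)=\tr(S)\subseteq b(\omega_R)\subseteq(a):_R\omega_R$ by \Cref{refIUl1} and \Cref{corinterval}. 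For (7)$\Rightarrow$(4): if $S\in\rf(R)$ with $\tr(S)\subseteq(a):_R\omega_R$, then for $f\in S^{*}$ and $\omega\in\omega_R$ we get $\omega f(s)\in(a)$ for all $s\in S$, so $\omega_R S^{*}\subseteq aS^{*}$, whence $S^{*}$ is $\omega_R$-Ulrich by \Cref{I-ulrich}; then $S^{**}$ is $\omega_R$-Ulrich by \Cref{ulrichunderhom} (with $N=R$) and $S\cong S^{**}$. Finally, assuming the residue field is infinite, (4)$\Leftrightarrow$(8) follows the same way using the sharper chain $b(\omega_R)\subseteq\core(\omega_R):_R\omega_R\subseteq(a):_R\omega_R$ from \Cref{corinterval}, or directly from the equivalence (1)$\Leftrightarrow$(5) of \Cref{refIUl2} applied to $M=S$ together with \Cref{omegaulrich}.

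The hard part will be the direction (7)$\Rightarrow$(4). The tempting shortcut of quoting \Cref{refIUl2} is not available there, since that statement carries an infinite-residue-field hypothesis while (7) does not, so I would instead re-run the short $\Hom$-theoretic argument from its proof (relying only on \Cref{I-ulrich} and \Cref{ulrichunderhom}) and be careful with the fractional-ideal bookkeeping: $\Hom_R(S,aR)=a\,\Hom_R(S,R)=aS^{*}$, $R:K=(a):_R\omega_R$, and $\tr(S)=\cond_R(S)$. Beyond that I do not anticipate any genuine difficulty, as everything else is an assembly of \Cref{totallyreflexive}, \Cref{I-ulrich}, \Cref{refIUl1}, \Cref{corinterval}, \Cref{ulrichunderhom}, and \Cref{omegaulrich}.
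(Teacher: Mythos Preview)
Your proposal is correct and follows the paper's own proof closely: both use \Cref{totallyreflexive} for (1)--(4), \Cref{I-ulrich} for (4)$\Leftrightarrow$(5), and the chain \Cref{refIUl1}/\Cref{corinterval}/\Cref{ulrichunderhom}/\Cref{omegaulrich} for (4)$\Leftrightarrow$(7)$\Leftrightarrow$(8).

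There are two small differences worth noting. First, for (4)$\Leftrightarrow$(6) the paper goes through blow-ups, invoking \Cref{B(I)} and \Cref{I-ulrichextension} (so $S$ is $\omega_R$-Ulrich iff $B(\omega_R)=R[K]\subseteq S$, which is equivalent to $K\subseteq S$ since $S$ is a ring), whereas you bypass $B(\omega_R)$ entirely and argue (5)$\Leftrightarrow$(6) directly from $1\in K$ and $S\cdot S=S$; your route is slightly more elementary. Second, for (7)$\Rightarrow$(4) the paper simply cites \Cref{refIUl2} together with \Cref{omegaulrich}, while you observe that \Cref{refIUl2} as stated carries an infinite-residue-field hypothesis and therefore re-run its ``(3)$\Rightarrow$(1)'' argument by hand. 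This extra care is justified: the relevant implication in the proof of \Cref{refIUl2} in fact only needs the existence of a principal reduction, not an infinite residue field, so the paper's citation is morally fine but your version makes the logic cleaner.
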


\begin{proof}
We already know that (1) through (4) are equivalent from \Cref{totallyreflexive}. The equivalence of (4) and (5) follows from \Cref{I-ulrich} and that of (4) and (6) from \Cref{B(I)} and \Cref{I-ulrichextension}.
Since $a$ is an arbitrary principal reduction of $\omega_R$ we see that (7) holds if and only if (8) holds, as the core is the intersection of all principal reductions if the residue field is infinite.

The equivalence of (4), (7), (8) follows from \Cref{refIUl2} and \Cref{omegaulrich}.
%The equivalence of $(3), (4), (5), (6), (7)$ follows from \Cref{frlem}, \Cref{1lem} and \Cref{rm1}.
\end{proof}

\begin{rem}\label{rm1}
The condition that $R$ has a canonical ideal with principal reduction is  
satisfied for instance when $\hat R$ is generically Gorenstein with infinite residue field, see \cite{goto2013almost}.
\end{rem}

\begin{cor}\label{refintclosure}
Let $R$ be as in \Cref{totallyreflexive}. Assume that $R$ has a canonical ideal $\omega_R$. Let $Q(R)\hookrightarrow A$ be an extension of the total quotient ring of $R$. Assume that the integral closure of $R$ in $A$, say $\bar{R}^A$, is a finite $R$-module. Then $\bar{R}^A\in \rf(R)$.
\end{cor}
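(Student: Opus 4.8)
The plan is to deduce this from \Cref{totallyreflexive} by showing that $\bar R^A$ is $\omega_R$-Ulrich as an $R$-module. The key observation is that $\bar R^A$ is integrally closed in $A$, hence integrally closed in its own total ring of fractions, so it is its own integral closure; more importantly, for \emph{any} regular ideal $I$ of $R$, the blow-up algebra $B(I)$ is a birational \emph{integral} extension of $R$ contained in $\overline R \subseteq \bar R^A$ (since $B(I) \subseteq I^n : I^n \subseteq \overline{R}$ and $\overline R$ embeds in $\bar R^A$ via $Q(R) \hookrightarrow A$). In particular $B(\omega_R) \subseteq \bar R^A$.

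First I would observe that $\bar R^A$ is a module-finite $R$-algebra by hypothesis, and that it lies in $\cm(R)$: it is a torsion-free $R$-module (being a subring of $A$ which contains $R$, it is torsion-free over the one-dimensional Cohen-Macaulay ring $R$) that is module-finite, hence maximal Cohen-Macaulay. So \Cref{totallyreflexive} applies to $S = \bar R^A$. Next I would invoke \Cref{I-UlrichThm}, specifically the equivalence of ``$S$ is $\omega_R$-Ulrich" with ``$S \in \cm(B(\omega_R))$": since $B(\omega_R) \subseteq \bar R^A = S$, the $R$-algebra $S$ is automatically a $B(\omega_R)$-module (it is a ring containing $B(\omega_R)$), hence $S \in \cm(B(\omega_R))$, which is condition (6) of \Cref{I-UlrichThm}. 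Therefore $S = \bar R^A$ is $\omega_R$-Ulrich.

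Finally, applying \Cref{totallyreflexive} (the implication from condition (5), ``$S$ is $\omega_R$-Ulrich", to condition (1), ``$\cm(S) \subset \rf(R)$", or directly to the fact that $S$ itself lies in $\cm(S)$) yields $\bar R^A \in \rf(R)$, as desired. I do not anticipate a serious obstacle here: the only point requiring a small amount of care is confirming that $\bar R^A \in \cm(R)$ and that the inclusion $B(\omega_R) \subseteq \bar R^A$ holds, both of which follow from the definition of integral closure and the fact that $B(\omega_R)$ is, by construction, a subring of $\overline R$ integral over $R$. One could alternatively phrase the argument via \Cref{condref} applied to the conductor, but going through the blow-up algebra is the most direct route.
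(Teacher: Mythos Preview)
Your proof is correct and follows essentially the same idea as the paper's: both arguments rest on the inclusion $B(\omega_R)\subseteq \overline{R}\subseteq \bar R^A$ and then invoke \Cref{totallyreflexive}. The only organizational difference is that the paper applies \Cref{totallyreflexive} with $S=\overline{R}$ (which is $\omega_R$-Ulrich by \Cref{condref}) and concludes because $\bar R^A\in\cm(\overline{R})$, whereas you apply \Cref{totallyreflexive} with $S=\bar R^A$ itself after showing it is $\omega_R$-Ulrich via \Cref{I-UlrichThm}(6); these are two ways of packaging the same content.
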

\begin{proof}
From \Cref{condref}, $\bar{R}\in \ulrichcat_{\omega_R}(R)$. Since $\bar{R}^A\in \cm(\bar{R})$, by \Cref{totallyreflexive} $\bar{R}^A\in \rf(R)$.
\end{proof}
\begin{cor}\label{corstronglyreflexive}
Let $R\rightarrow S$ be a finite extension of rings such that $R$ is a generically Gorenstein $(S_2)$ ring of arbitrary dimension and $S$ is $(S_1)$. If the extension $R\rightarrow S$ is strongly reflexive in codimension one, then any finite $(S_2)$ $S$-module $M$ is $R$-reflexive.
\end{cor}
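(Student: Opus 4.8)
The plan is to reduce the statement to the one-dimensional situation handled by \Cref{totallyreflexive}, by localizing at the primes of $R$ of height at most one. Since $R$ is $(S_2)$, a finitely generated $R$-module is reflexive exactly when it satisfies $(S_2)$ as an $R$-module and is reflexive in codimension one, i.e.\ after localizing at every prime $P$ of $R$ with $\height P\le 1$; this is the criterion already recalled in the introduction (\cite[Proposition 1.4.1]{bruns_herzog_1998}). So I would verify these two conditions for the given $(S_2)$ $S$-module $M$.

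First, $M$ should satisfy $(S_2)$ over $R$. Here I would use that $S$ is module-finite over $R$: for a prime $P$ of $R$ the depth of $M_P$ over $R_P$ is governed by the depths of $M$ at the finitely many primes of $S$ lying over $P$, and likewise for dimension. Feeding in that $M$ is $(S_2)$ over $S$, together with the fact that $R\to S$ being strongly reflexive in codimension one forces $S_P\in\cm(R_P)$ for $\height P\le 1$ (so that $S$ is maximal Cohen--Macaulay over $R$ in codimension $\le 1$), one should be able to transfer the $(S_2)$ property of $M$ from $S$ to $R$. I expect this descent of depth and dimension along the finite map $R\to S$ to be the main technical point, since without control over how the primes of $S$ sit over those of $R$ such a descent can fail.

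Second, $M_P$ should be $R_P$-reflexive for every prime $P$ with $\height P\le 1$. If $\height P=0$ then $R_P$ is Artinian, and since $R$ is generically Gorenstein, $R_P$ is Gorenstein; hence every finitely generated $R_P$-module --- in particular $M_P$ --- is reflexive. If $\height P=1$ then $R_P$ is a one-dimensional Cohen--Macaulay local ring (as $R$ is $(S_2)$) which is generically Gorenstein, hence carries a canonical ideal; the module-finite extension $R_P\to S_P$ is strongly reflexive by hypothesis; and $M_P\in\cm(S_P)$, using that $M$ is $(S_2)$ over $S$ and $S_P\in\cm(R_P)$. As \Cref{totallyreflexive} is stated for an arbitrary module-finite algebra over a one-dimensional Cohen--Macaulay local ring (with no local hypothesis on the algebra, so it applies to the possibly semilocal $S_P$), it yields $\cm(S_P)\subset\rf(R_P)$, and therefore $M_P$ is $R_P$-reflexive.

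Combining the two verifications with the reflexivity criterion above gives that $M$ is $R$-reflexive, completing the argument.
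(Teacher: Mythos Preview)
Your proposal is correct and follows exactly the paper's approach: reduce to codimension one via the reflexivity criterion for modules over an $(S_2)$ ring, then invoke \Cref{totallyreflexive} at the height-one primes using the strongly-reflexive-in-codimension-one hypothesis. You are simply more explicit than the paper's two-sentence proof---separating out the height-zero case and flagging the $(S_2)$ descent from $S$ to $R$ as the key technical point, which the paper asserts without further comment.
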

\begin{proof}
Since $R$ satisfies $(S_2)$ and $M$ is a $(S_2)$ $R$-module, $M$ is $R$-reflexive if and only if this is true in codimension one. Since $R$ is generically Gorenstein and $S$ is $(S_1)$, we may apply \Cref{totallyreflexive} to see $M$ is $R$-reflexive in codimension one.
\end{proof}
\begin{cor}\label{intclosurehigh}
Let $R$ be a generically Gorenstein $(S_2)$ ring of arbitrary dimension. Let $Q(R)\hookrightarrow A$ be an extension of the total quotient ring of $R$. Assume that the integral closure of $R$ in $A$, say $\bar{R}^A$, is a finite $R$-module. Then $\bar{R}^A\in \rf(R)$.
\end{cor}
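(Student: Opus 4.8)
The plan is to reduce to the one-dimensional result \Cref{refintclosure} via the usual passage to codimension one. Set $S:=\overline{R}^A$. The cleanest route is to apply \Cref{corstronglyreflexive} to the module-finite extension $R\to S$ with $M=S$: granting that $S$ is an $(S_1)$ ring, that $S$ is $(S_2)$ as a module over itself, and that $R\to S$ is strongly reflexive in codimension one, that corollary yields $S\in\rf(R)$. (Alternatively one invokes directly the fact recalled in the introduction that over the $(S_2)$ ring $R$ a finite module is reflexive if and only if it satisfies $(S_2)$ and is reflexive at every prime of height $\le 1$.) The first two requirements together say that $\overline{R}^A$ is an $(S_2)$ ring, which I address last; the heart of the matter is the codimension-one statement.

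For the codimension-one statement, fix a prime $P$ of $R$ with $\height P\le 1$ and put $W=R\setminus P$. Since $R$ is $(S_2)$, hence $(S_1)$, we have $\ass R=\Min R$; a routine computation with multiplicative sets then shows that localizing $R\subseteq S\subseteq A$ at $P$ produces $R_P\subseteq S_P\subseteq A_P$ with $Q(R_P)=W^{-1}Q(R)\hookrightarrow A_P$, that $S_P$ is the integral closure of $R_P$ in $A_P$ (integral closure commutes with localization) and is module-finite over $R_P$, and that $\overline{R_P}\subseteq S_P$ so that $S_P\in\cm(\overline{R_P})$. If $\height P=0$, then $R_P$ is Artinian and, being a localization of the generically Gorenstein ring $R$, is Gorenstein; hence every $R_P$-module is reflexive and $R_P\to S_P$ is strongly reflexive. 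If $\height P=1$, then $R_P$ is a one-dimensional Cohen--Macaulay local ring which is again generically Gorenstein, hence admits a canonical ideal $\omega_{R_P}$; since $\overline{R_P}$ is module-finite over $R_P$ it is $\omega_{R_P}$-Ulrich by \Cref{condref}, so $\cm(\overline{R_P})\subseteq\rf(R_P)$ by \Cref{totallyreflexive}, and in particular $\cm(S_P)\subseteq\rf(R_P)$; thus $R_P\to S_P$ is strongly reflexive. So $R\to S$ is strongly reflexive in codimension one, which is also just \Cref{refintclosure} applied at each such $R_P$.

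The step I expect to require the most care is showing that $S=\overline{R}^A$ satisfies $(S_2)$. The point is that module-finiteness of $\overline{R}^A$ over $R$ is quite restrictive --- for instance it forces the nilradical of $A$ to meet $\overline{R}^A$ only in a module-finite piece. Replacing $R$ by its normalization $\overline{R}$ (module-finite over $R$, normal, and with $\overline{R}^A$ again equal to the integral closure of $\overline{R}$ in $A$) one reduces to the case $R$ normal, so $R=\prod_jR_j$ is a finite product of normal domains, $A=\prod_jA_j$ with $A_j$ an overring of $\operatorname{Frac}(R_j)$, and $\overline{R}^A=\prod_j\overline{R_j}^{A_j}$; it then suffices to know that the module-finite integral closure of a normal domain in an overring of its fraction field is normal, hence $(S_2)$. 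Granting this, the three hypotheses of \Cref{corstronglyreflexive} hold and $\overline{R}^A\in\rf(R)$. (A minor point, implicit above, is that a canonical ideal must exist at the height-one localizations $R_P$; this is automatic in the geometric situations the paper targets and, more generally, follows from generic Gorensteinness whenever a canonical module is present.)
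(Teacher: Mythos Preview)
Your approach is essentially the paper's: reduce to codimension one via \Cref{corstronglyreflexive}. The only real difference is the choice of the intermediate ring $S$. The paper takes $S=\overline{R}$ (the integral closure in $Q(R)$) rather than $S=\overline{R}^A$; then ``$R\to\overline{R}$ is strongly reflexive in codimension one'' is immediate from \Cref{condref} and \Cref{totallyreflexive} (which is exactly your height-one argument), and one simply plugs in $M=\overline{R}^A$ as an $\overline{R}$-module. Your verification that $R\to\overline{R}^A$ is strongly reflexive in codimension one is correct but factors through $\overline{R}$ anyway, so the paper's choice is the more economical packaging of the same idea.

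Where you go beyond the paper is in flagging the $(S_2)$ hypothesis on $\overline{R}^A$ needed to invoke \Cref{corstronglyreflexive}, and the existence of canonical ideals at height-one localizations. The paper's one-line proof leaves both implicit. Your sketch for $(S_2)$ (reduce to $R$ normal, decompose as a product of domains, then use normality of module-finite integral closures in overrings of the fraction field) is the natural line, though as you acknowledge it is not carried to the end; note in particular that $R$ is not assumed reduced, so the passage to a product of normal domains needs a word, and the claim about normality of $\overline{R_j}^{A_j}$ deserves justification when $A_j$ is not a field. These are genuine details, but they are details the paper itself does not supply, so your proof is at least as complete as the paper's and your identification of the subtle point is on target.
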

\begin{proof}
Since $R\rightarrow \bar{R}$ is strongly reflexive in codimension one, by \Cref{corstronglyreflexive} $\bar{R}^A\in \rf(R)$.
\end{proof}

\section{Applications: Finiteness results}\label{firstapps}

Throughout this section, we assume that $(R,\m,k)$ is a one-dimensional Cohen-Macaulay local ring. Here we study when certain subsets of interesting ideals and modules are ``finite".  We say that a subset $\mathcal S$ of $\mod(R)$ is {\it of  finite type} if any element of $\mathcal S$ is isomorphic to a direct sum of modules from a finite set in $\mod(R)$. Note that since we sometimes consider sets that are not subcategories which are closed under isomorphism, this notion is a bit broader than the usual notion of ``finite representation type". Representation finiteness of  subcategories of $\cm(R)$ have been studied heavily, and many beautiful connections to the singularities of $R$ have been discovered over the years. Our study suggests that the same promise could hold for reflexive modules. 

Consider the following classes of ideals of $R$:
\begin{align*}
\mathscr{I}(R)&:=\{I~|\text{$I$ is an integrally closed regular ideal}\}\\
\mathscr{I}_{\cond}(R)&:=\{I~|\text{$I$ is an integrally closed regular ideal and } \cond\subseteq I\}\\
\rf_1(R) &:=\{I~|~\text{$I$ is a reflexive regular ideal}\}.\\
\tracecat(R) &:=\{I~|~ I~\text{is a regular trace ideal}\}
\end{align*}

We shall look at the finiteness of these classes of ideals and the interaction between them. Note that from \Cref{integrallyclosedreflexive}, we have that $\mathscr{I}\subseteq \rf_1(R)$ and that $\mathscr{I}_{\cond}\subseteq \reftracecat(R):= \rf_1(R)\cap \tracecat(R)$. 

\subsection{Finiteness of $\tracecat(R)$}
We begin by answering the following question raised by E. Faber in \cite[Question 3.7]{faber2019trace}.

\begin{ques}\label{questionfaber}
Let $R$ be a one-dimensional complete local or graded ring. Are the following equivalent?
\begin{enumerate}
    \item $\cm(R)$ is of finite type.
    \item There are only finitely many possibilities for $\tr(M)$, where $M \in \cm(R)$.
\end{enumerate} 
\end{ques}

The answer to this question is negative. Consider the following example.
\begin{ex}

Let $\ds R=k[[t^e,\dots,t^{2e-1}]]$ where $\overline{R}=k[[t]]$, $k$ infinite and $e\geq 4$. Then the set of trace ideals is finite but $\cm(R)$ is infinite.

\begin{proof}
Here $\ds \mathfrak{c}=\m$. By \Cref{tracecatrem}, there are exactly two trace ideals, $R$ and $\m$. Since $\overline{R}$ is an $\m$-Ulrich $R$-module, $\mu_R(\overline{R})=e(R)$. However, since $e(R)=e\geq 4$, $\cm(R)$ is infinite by \cite[Theorem 4.2]{leuschke2012cohen}.
\end{proof}
\end{ex}

Note here that finitely many trace ideals in a ring can raise some natural classification questions. Of course, a single trace ideal characterizes a DVR. The following proposition provides a strong motivation to classify such rings. 
\begin{prop}\label{TR3}
Let $(R,\m,k)$ be a complete local one-dimensional domain containing an infinite field $k$, so that $\overline{R}=k[[t]]$. Let $e(R)=e$ and let $v$ be the valuation defining $\overline{R}$. 
\begin{enumerate}
    \item  $\#\tracecat(R)=2$ if and only if $\ds R=k[[t^e,\dots,t^{2e-1}]]$.
    \item The following are equivalent
    \begin{enumerate}
    \item $\#\tracecat(R)=3$ 
    \item $\ds R=k[[\alpha t^e,t^c,t^{c+1},\dots, \widehat{t^{2e}},\dots,t^{c+e-1}]]$ where $\alpha$ is a unit of $k[[t]]$ and $e+2\leq c\leq 2e$.
    \item $\ell(R/\cond)=2$
    \end{enumerate}
\end{enumerate}
\end{prop}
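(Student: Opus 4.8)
The strategy is to leverage \Cref{tracecatrem}, which tells us that every regular trace ideal contains the conductor $\cond$. Since $R$ is a complete local domain with $\overline R = k[[t]]$, the conductor is $\cond = t^{\gamma}\overline R \cap R$ for some $\gamma$ (the largest gap of the value semigroup plus one), and in particular $\ell(R/\cond)$ is finite. Regular trace ideals then correspond to certain $R$-submodules of $R/\cond$, so controlling $\#\tracecat(R)$ is a matter of controlling $\ell(R/\cond)$ and the lattice of admissible intermediate ideals. The proof will run through the implications in a cycle for part (2), while part (1) is essentially a restatement of the example preceding the proposition together with the observation that $\#\tracecat(R)=2$ forces $\cond=\m$.

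\emph{Part (1).} If $\#\tracecat(R) = 2$, then $R$ and $\m$ are the only regular trace ideals (both are trace ideals by \Cref{c&m}, and $R\ne \m$ since $R$ is not a DVR once $e\geq 2$; if $e=1$ then $R$ is a DVR and $\#\tracecat(R)=1$). By \Cref{tracecatrem}, $\cond \subseteq \tr(I)$ for every regular $I$; applying this to $I=\m$ (or noting every trace ideal sits between $\cond$ and $R$) we get that the only ideals between $\cond$ and $R$ are $R$ and $\m$, forcing $\cond = \m$, i.e. $\m \subseteq t^{e}\overline R$, equivalently $\m \overline R = t^e \overline R$ and $v(R)\supseteq \{0\}\cup\{n: n\geq e\}$. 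Since $e(R)=e$ means $v(R)$ has exactly $e$ gaps, we get $v(R) = \{0,e,e+1,\dots\}$ exactly, so $R = k[[t^e,t^{e+1},\dots,t^{2e-1}]]$. Conversely for this ring $\cond = \m$ and \Cref{tracecatrem} gives exactly two trace ideals $R,\m$.

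\emph{Part (2), the cycle (c) $\Rightarrow$ (b) $\Rightarrow$ (a) $\Rightarrow$ (c).} For (c) $\Rightarrow$ (b): if $\ell(R/\cond)=2$ then $\cond \subsetneq \m \subsetneq R$ with each step of length one, so $\m = \cond + R\cdot u$ for a single extra generator; translating to the value semigroup, $v(R)$ has exactly two gaps and contains all sufficiently large integers, and one checks $v(R) = \{0\}\cup\{e,c,c+1,\dots\}\setminus\{\text{one value}\}$ with the combinatorics forcing the stated monomial description $R=k[[\alpha t^e, t^c, t^{c+1},\dots,\widehat{t^{2e}},\dots,t^{c+e-1}]]$, with $e+2\leq c\leq 2e$ (the unit $\alpha$ appears because $t^e$ need only be present up to a unit multiple, since $2e$ may be a gap). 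For (b) $\Rightarrow$ (a): for such $R$ one computes directly that $\cond = \m^2$ (or at least that the only ideals between $\cond$ and $R$ are $R$, $\m$, and $\cond$), and each of these three is a trace ideal --- $R$ and $\m$ by \Cref{c&m}, and $\cond$ by \Cref{goto2020} since $\cond$ is the conductor to $\overline R$; then \Cref{tracecatrem} shows there are no others, giving $\#\tracecat(R)=3$. For (a) $\Rightarrow$ (c): if $\#\tracecat(R)=3$, then as in part (1) every regular trace ideal lies between $\cond$ and $R$, and both $\m$ and $\cond$ are trace ideals; if $\cond = \m$ we'd be in case (1) with only two trace ideals, contradiction, so $\cond \subsetneq \m \subsetneq R$ and $\ell(R/\cond)\geq 2$. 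If $\ell(R/\cond)\geq 3$, there is an ideal $\m \supsetneq J \supsetneq \cond$ strictly between, and I would argue that one can produce a fourth regular trace ideal --- e.g. $\tr(J)$ if $J$ itself is not a trace ideal one passes to a related ideal, or more robustly one exhibits enough distinct intermediate trace ideals (using that $\m$, $\cond$, and the trace of a well-chosen colength-three ideal are pairwise distinct) --- contradicting $\#\tracecat(R)=3$. Hence $\ell(R/\cond)=2$.

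\textbf{Main obstacle.} The delicate step is (a) $\Rightarrow$ (c): ruling out $\ell(R/\cond)\geq 3$ by producing a fourth trace ideal is not purely formal, since an arbitrary ideal between $\cond$ and $\m$ need not be a trace ideal. I expect to need the structure of the value semigroup and the classification of colength-two ideals (\Cref{colength2minmult}, \Cref{xI}, \Cref{traceformula}) to show that when $\ell(R/\cond)\geq 3$ the lattice of trace ideals between $\cond$ and $R$ genuinely has more than three elements --- for instance by checking that $\cond=R:\overline R$, $\m$, and $\tr(\m^2)$ (or $\tr$ of a suitable colength-$2$ ideal) are three \emph{proper} trace ideals distinct from $R$, already giving four in total. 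The bookkeeping identifying exactly which semigroups give $\ell(R/\cond)=2$ and pinning down the range $e+2\le c\le 2e$ in (b) is the other place where care is required, but it is routine semigroup combinatorics rather than a conceptual difficulty.
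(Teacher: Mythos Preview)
Your cycle for part (2) is the same as the paper's in spirit (the paper runs (a) $\Rightarrow$ (b) $\Rightarrow$ (c) $\Rightarrow$ (a), but its (a) $\Rightarrow$ (b) really passes through (c) first), so the approaches are aligned. Part (1) is fine. However, in part (2) you have the difficulty exactly backwards, and this matters because the methods you propose for the step you flag as hard do not work, while the step you dismiss as routine is where the real argument lives.

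\textbf{(a) $\Rightarrow$ (c) is not the obstacle.} The clean tool you are missing is that every integrally closed $\m$-primary ideal of $R$ is a (reflexive) trace ideal; this is \Cref{integrallyclosedreflexive} with $S=\overline R$. The integrally closed ideals are precisely the $I_f=\{r\in R: v(r)\geq f\}$ for $f\in v(R)$, and $\ell(R/\cond)$ counts the semigroup elements below the conductor value $c$. If $\ell(R/\cond)\geq 3$, there is some $f\in v(R)$ with $e<f<c$, and then $I_f$ is a fourth trace ideal strictly between $\m$ and $\cond$. This is exactly how the paper opens its (a) $\Rightarrow$ (b). Your suggestions involving $\tr(J)$ for an arbitrary intermediate $J$, or $\tr(\m^2)$, do not obviously produce a \emph{new} trace ideal distinct from $\m$ and $\cond$, so that line would not close the argument.

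\textbf{(c) $\Rightarrow$ (b) is not routine.} Knowing the value semigroup does not determine $R$; you must show that $\m$ is generated by a single element $\alpha t^e$ together with $\cond=(t^c,\dots,t^{c+e-1})$. This is precisely the content of the paper's argument (packaged inside its (a) $\Rightarrow$ (b)): once no valuation lies strictly between $e$ and $c$, one takes a principal reduction $x=t^e+\sum_{i=1}^{c-e-1}k_it^{e+i}$ of $\m$ and checks by a valuation comparison that any $r\in\m$ lies in $(x)+\cond$, whence $\m=(x)+\cond$. The bounds $e+2\leq c\leq 2e$ then fall out: $c\leq 2e$ because $2e\in v(R)$ and there is no semigroup value strictly between $e$ and $c$, and $c\geq e+2$ because $\cond\neq\m$.

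A minor correction in your (b) $\Rightarrow$ (a): the claim $\cond=\m^2$ is generally false (only $\m^2\subseteq\cond$, from $c\leq 2e$). What you actually need there is $\ell(R/\cond)=2$, which is exactly the paper's (b) $\Rightarrow$ (c).
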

\begin{proof}
  Note that every integrally closed ideal in $R$ is of the form ${I}_f:=\{r\in R~|~v(r)\geq f\}$ where $f\in \mathbb{N}$. Then $\m={I}_e$ and let $\cond={I}_c$ where $c$ is chosen maximally, that is $\cond\neq I_{c+1}$.

    For (1), first assume $\#\tracecat(R)=2$. Here $\m$ and $R$ are the only trace ideals and so $\cond =\m$. Hence, $e=c$ and choose $t^e+\sum_i \beta_it^i\in \m$, $\beta_i\in k$, so that it is part of a minimal generating set for $\m$. Since $\cond=\m$, we have that $t^{e+j}+t^j\sum_i\beta_it^i\in \m$ for all $j\geq 1$. Since $R$ is complete, we have that $t^{e+j}\in \m$ for all $j\geq 0$ and thus $R=k[[t^e,\dots,t^{2e-1}]]$.  The other direction is clear using \Cref{tracecatrem}.
    
    For (2), first assume $\#\tracecat(R)=3$. By \Cref{integrallyclosedreflexive}, we get that there are no integrally closed ideals strictly between $\cond$ and $\m$. In other words there does not exist $r\in R$ such that $v(r)=f$ for all $e<f<c$. Since $R$ is complete, $\cond=(t^c,t^{c+1},\dots,t^{c+e-1})$. Thus we can choose a principal reduction $x=t^e+\sum\limits_{i=1}^{c-e-1} k_it^{e+i}\in R$ of $\m$ where $k_i\in k$.  Consider the ideal $I:=(x)+ \cond$. We claim that $I=\m$. Take any element $r\in \m$. If $v(r)>e$, then $r\in \cond\subseteq I$. If $\ds v(r)=e$, to show $r\in I$, after multiplication by a suitable element of $k$ we may assume that $r=t^e+\sum\limits_{i=1}^{c-e-1} b_it^{e+i}$ where $b_i\in k$. If $r-x\neq 0$, then necessarily $e<v(r-x)<c$, which is impossible. Therefore $r=x$ and $I=\m$. Finally we have $2e\geq c$ since there does not exist any element in $R$ with valuation strictly between $e$ and $c$. Since $\cond\neq \m$, we have that $c\geq e+2$.
    
To show (b) implies (c), assume $R$ has the specified form. Then since $\cond=(t^c,t^{c+1},\dots,t^{c+e-1})$, we have that $\m/\cond$ is a cyclic $R$-module. Moreover since $c\leq 2e$, $\m^2\subseteq \cond$ and $\m/\cond$ is a $k$-vector space. Thus $\ell(\m/\cond)=1$, that is $\ell(R/\cond)=2$. 
    
    (c) implies (a) is clear from \Cref{tracecatrem}.
    \end{proof}

\vspace{1em}
\subsection{Finiteness of $\mathscr{I}$ and $\mathscr{I}_{\cond}$}{~}

	\begin{prop}\label{cond_finite}
	Let $R$ be a one-dimensional Cohen-Macaulay local ring. Suppose $\overline{R}$ is a finite $R$-module. Then $\ds \mathscr{I}_{\cond}$ is a finite set. Moreover if $R$ is a complete local domain, $\mathscr{I}$ is of finite type.
\end{prop}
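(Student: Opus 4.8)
The plan is to reduce everything to elementary properties of the normalization $\overline R$.

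\emph{Finiteness of $\mathscr I_\cond$.} Since $\overline R$ is module-finite over $R$, the conductor $\cond=R:\overline R$ is a regular ideal and $\overline R/\cond$ is Artinian. The first thing I would record is that for any integrally closed regular ideal $I$ one has $I=\overline I=I\overline R\cap R$: in dimension one $\overline R$ is a normal semilocal ring, so $I\overline R=\bigcap_{\mathfrak q}I\overline R_{\mathfrak q}$ over the maximal ideals $\mathfrak q$ of $\overline R$ (each $\overline R_{\mathfrak q}$ a discrete valuation ring), and intersecting with $R$ reproduces the valuative description of $\overline I$; in the non-reduced case one first passes to $R_{\mathrm{red}}$. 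Consequently $I\mapsto I\overline R$ is injective on $\mathscr I_\cond$, and its image lies among the ideals of $\overline R$ that contain $\cond\overline R=\cond$, i.e.\ among the ideals of $\overline R/\cond$. Since $\overline R$ is a finite product of Dedekind domains (the curve-singularity case being the one of interest), $\overline R/\cond$ is an Artinian principal ideal ring, which has only finitely many ideals. Hence $\mathscr I_\cond$ is finite.

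\emph{Finiteness of $\mathscr I$ when $R$ is a complete local domain.} Now $\overline R$ is module-finite over a complete local domain, hence a complete semilocal normal domain of dimension one; being a complete semilocal domain it is local, so $\overline R$ is a DVR. Write $v$ for its valuation, $t$ for a uniformizer, and $\cond=t^{c}\overline R$. The first step is to identify every integrally closed regular ideal: I claim $I=I_n:=\{x\in R:v(x)\ge n\}$, where $n=\min\{v(a):a\in I\}$. For $\overline I\subseteq I_n$: elements of $I^i$ have $v$-value $\ge in$, so if some $x\in\overline I$ had $v(x)<n$, then in an integral equation $x^m+c_1x^{m-1}+\dots+c_m=0$ with $c_i\in I^i$ the leading term $x^m$ would strictly dominate the others in $v$, which is impossible. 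For $I_n\subseteq\overline I$: pick $a\in I$ with $v(a)=n$; if $v(x)\ge n$ then $x/a\in\overline R$ is integral over $R$, and clearing denominators in its integral equation shows $x$ is integral over $I$. So $I=\overline I=I_n$, determined by the integer $n\ge 0$. The second step: if $n\ge c$ then $t^{n}\overline R\subseteq t^{c}\overline R=\cond\subseteq R$, so $I_n=t^{n}\overline R\cong\overline R$ as $R$-modules (via multiplication by $t^{n}$); if $0\le n<c$ there are only the finitely many ideals $R=I_0,I_1,\dots,I_{c-1}$. Thus every $I\in\mathscr I$ is $R$-isomorphic to one of the finitely many modules $R,I_1,\dots,I_{c-1},\overline R$, and since a nonzero ideal of a domain is indecomposable this says precisely that $\mathscr I$ is of finite type.

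\emph{Main obstacle.} The only non-formal input is the identity $I=I\overline R\cap R$ for integrally closed $I$ (equivalently, that in dimension one the integral closure of an ideal is read off from the normalization), together with the ensuing fact that $\overline R/\cond$ has only finitely many ideals — which comes down to $\overline R$ being a principal ideal ring in a neighbourhood of its conductor. In the complete-domain case this is transparent, since $\overline R$ is literally a DVR and the whole argument reduces to the valuation computation above; the only points requiring mild care are the passage to $R_{\mathrm{red}}$ in the general case and the remark that ``of finite type'' here just means finitely many isomorphism classes because ideals of a domain are indecomposable.
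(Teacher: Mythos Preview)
Your argument is correct and follows essentially the same route as the paper: both parts hinge on the identity $I=I\overline R\cap R$ for integrally closed ideals, giving an injection into the finitely many ideals of $\overline R$ above $\cond$ (the paper writes this out via the primary decomposition $\cond=\bigcap \mathfrak n_i^{(r_i)}$ rather than invoking that $\overline R/\cond$ is an Artinian PIR), and in the complete-domain case both observe that any $I\subseteq\cond$ is already an $\overline R$-ideal, hence $\cong\overline R$. Your explicit valuation computation that every integrally closed ideal equals some $I_n$ is more detailed than the paper's treatment but not a different idea; note also that the hypothesis $\overline R$ finite already forces $R$ to be reduced (analytic unramifiedness), so your aside about passing to $R_{\mathrm{red}}$ is unnecessary.
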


\begin{proof}

Let $\ds \text{MaxSpec}(\overline{R})=\{\mathfrak{n}_1,...,\mathfrak{n}_s\}$.
Since $\cond$ is a regular ideal of $\overline{R}$, choose an irredundant primary decomposition in $\overline{R}$ , $\ds \cond = \cap_{i=1}^s\mathfrak{n}_i^{(r_i)}$ where $I^{(n)}$ denotes the $n^{th}$ symbolic power of an ideal $I$. Since any $J\in \mathscr{I}_{\cond}$ is the contraction to $R$ of an ideal of $\overline{R}$ containing $\cond$, $\ds J= \cap_{i=1}^s(\mathfrak{n}_i^{(s_i)}\cap R)$ where $1\leq s_i\leq r_i$ for each $i$. Thus $\mathscr{I}_{\cond}$ is a finite set.

 Now assume $R$ is a complete local domain, so that $\overline{R}$ is a DVR; thus the elements of $\mathscr{I}$ are totally ordered by inclusion. From the first part of this proposition, it suffices to consider $I\in \mathscr{I}$, $I\subseteq \cond$. Now $I=I\overline{R}\cap R$, but $I\overline{R}\subseteq \cond\overline{R}=\cond$. Therefore $I$ is also an ideal of $\overline{R}$ and there exists $0\neq a\in R$ such that $a\overline{R}=I$. Therefore $I\cong \overline{R}$ as $R$-modules and $\mathscr{I}$ has finite type.
\end{proof}

\subsection{Finiteness of $\rf_1(R)$}

We first note that $\rf(R)$ (in fact $\rf_1(R)$) is of infinite type if $\overline{R}$ is not finitely generated over $R$.

\begin{lem}\label{B(I)finite}
Let $R$ be a one-dimensional Cohen-Macaulay local ring. Let $I$ be a regular ideal. Then $B(I)$ is a finite $R$-module.
\end{lem}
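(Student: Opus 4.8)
The statement to prove is that $B(I)$ is a finite $R$-module for any regular ideal $I$ in a one-dimensional Cohen-Macaulay local ring $R$. The natural approach is to reduce to the well-understood case where $I$ has a principal reduction, and then identify $B(I)$ with an explicit endomorphism ring. First I would recall that we may enlarge the residue field via a faithfully flat local extension $R \to R'$ (e.g. $R' = R[X]_{\m R[X]}$) so that $IR'$ acquires a principal reduction; since finiteness of a module descends along faithfully flat extensions and $B(IR') = B(I)\otimes_R R'$ (the blow-up construction commutes with such base change, as $I^n : I^n$ does), it suffices to treat the case where $I$ itself has a principal reduction $x$.

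Now invoke \Cref{B(I)}: with $x$ a principal reduction of $I$, we have $B(I) = R[\tfrac{I}{x}]$. Since $x$ is a reduction, $I^{n+1} = xI^n$ for all $n \gg 0$, say for all $n \ge r$. This means $R[\tfrac{I}{x}]$ is generated as an $R$-algebra in degrees $\le r$; more precisely, $\big(\tfrac{I}{x}\big)^{r+1} = \tfrac{I^{r+1}}{x^{r+1}} = \tfrac{xI^r}{x^{r+1}} = \big(\tfrac{I}{x}\big)^r$, so every power $\big(\tfrac{I}{x}\big)^n$ for $n \ge r$ equals $\big(\tfrac{I}{x}\big)^r$. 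Therefore $B(I) = R + \tfrac{I}{x} + \cdots + \big(\tfrac{I}{x}\big)^r$ as an $R$-module, which is a finite sum of finite $R$-modules (each $\big(\tfrac{I}{x}\big)^j \cong I^j$ is a finitely generated $R$-module since $R$ is Noetherian), hence finitely generated over $R$.

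I expect the only genuine subtlety to be the faithfully flat reduction step: one must check that passing to an extension with infinite (or merely larger) residue field both produces a principal reduction for $I$ and is compatible with the blow-up, and that finiteness then descends. Both are standard — the existence of principal reductions over rings with infinite residue field is classical, the operation $I \mapsto I^n : I^n$ commutes with flat base change, and a module is finite over $R$ iff it becomes finite after a faithfully flat base change. Alternatively, one could bypass this entirely by working directly: even without a principal reduction, $B(I) = \bigcup_n (I^n : I^n)$ is an increasing union of $R$-submodules of $Q(R)$ all contained in $\overline{R}$, and one can argue the union stabilizes using that $R$ is one-dimensional Cohen-Macaulay so $\overline{R}$ — or at least the relevant Rees-algebra normalization data — behaves well; but the principal-reduction route is cleaner and the stabilization $I^{n+1} = xI^n$ makes the finite generating set completely explicit. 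So the main obstacle is really just bookkeeping around base change rather than any deep difficulty.
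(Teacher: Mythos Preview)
Your argument is correct: reducing to the case of a principal reduction via faithfully flat base change and then observing that $(I/x)^n$ stabilizes for $n\ge r$ (where $r$ is the reduction number) gives $B(I)$ as a finite sum of finitely generated modules. The base-change bookkeeping is routine, as you note.

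The paper takes a slightly different route that avoids base change entirely. It uses the $I$-Ulrich machinery: by \Cref{ulrichex}, $I^n$ is $I$-Ulrich for $n\gg 0$, so \Cref{I-UlrichThm} gives $I^{n+1}\cong I^n$; hence $I^{n+1}:I^{n+1}=I^n:I^n$ in $Q(R)$ and the union $B(I)=\bigcup_n(I^n:I^n)$ stabilizes at a single finitely generated $\End_R(I^n)$. Both arguments are really proving the same stabilization, but the paper's version stays intrinsic to $R$ and leverages results already established in Section~\ref{secIulrich}, whereas your version is more self-contained and would work without any of the $I$-Ulrich theory. Your approach also makes the generating set for $B(I)$ completely explicit, which the paper's does not.
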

\begin{proof}
From \Cref{ulrichex}, $I^n$ is $I$-Ulrich for sufficiently large $n$. Thus $I^{n+1}\simeq I^n$ for such $n$ by \Cref{I-UlrichThm}. Therefore $\End_R(I^n)$ stabilizes and $B(I)$ is a finite $R$-module.
\end{proof}

\begin{lem}
Let $R$ be a one-dimensional Cohen-Macaulay local ring. Assume $\rf_1(R)$ has finite type and that $R$ admits a canonical ideal $\omega_R$. Then $\overline{R}$ is a finite $R$-module. In particular, $R$ is reduced.
\end{lem}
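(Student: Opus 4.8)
The plan is to argue by contradiction. Assume $\overline R$ is not a finite $R$-module. Then the poset of module-finite $R$-subalgebras $B$ with $R\subseteq B\subseteq\overline R$ has no maximal element (a maximal one would, being module-finite, contain $R[x]$ for every $x\in\overline R$ and hence equal $\overline R$), so there is an infinite strictly increasing chain $R=B_0\subsetneq B_1\subsetneq B_2\subsetneq\cdots$ of finite birational extensions of $R$. Put $I_n:=\cond_R(B_n)=R:B_n$. Since $R$ satisfies $(S_1)$, \Cref{goto2020} tells us each $I_n$ is a reflexive regular trace ideal, so $I_n\in\reftracecat(R)\subseteq\rf_1(R)$.

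I would next check that the $I_n$ are infinitely many \emph{distinct} ideals. They form a descending chain, so it is enough that it is not eventually constant. If $I_n=I$ for all $n\ge N$, then $I\,B_n\subseteq R$ for all such $n$, so $\bigcup_n B_n\subseteq R:I\cong\Hom_R(I,R)$; as $I$ is a nonzero ideal (it contains any common denominator of the module-finite extension $B_N$), $\Hom_R(I,R)$ is a finitely generated $R$-module, forcing $\bigcup_n B_n$ to be finitely generated and the chain $(B_n)$ to stabilize, a contradiction. Hence $\{I_n\}$ is infinite.

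The crux --- and the step I expect to be the main obstacle --- is to convert ``finite type'' (which a priori only pins down the indecomposable summands, not their multiplicities) into finitely many isomorphism classes in $\rf_1(R)$, and then to extract a contradiction. For the first point: a reflexive regular ideal $M$ localizes to $R_P$ at every $P\in\Min(R)$, while each nonzero indecomposable summand of $M$ is a nonzero maximal Cohen--Macaulay module and hence supported at some minimal prime; comparing lengths of localizations over all minimal primes bounds the number of summands of $M$ by $C:=\sum_{P\in\Min(R)}\ell(R_P)<\infty$. Thus $\rf_1(R)$ has only finitely many isomorphism classes, so by the previous paragraph there exist $m\ne n$ with $I_m\cong I_n$ as $R$-modules but $I_m\ne I_n$ as ideals. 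Any $R$-module isomorphism $I_m\cong I_n$ is multiplication by some $u\in Q(R)$, whence, by commutativity, $\End_R(I_m)=(I_m:I_m)=(I_n:I_n)=\End_R(I_n)$ inside $Q(R)$. But $I\mapsto\End_R(I)$ is injective on reflexive regular trace ideals --- it is the map $\alpha$ of the bijection in \Cref{goto2020} --- so $I_m=I_n$, a contradiction. Therefore $\overline R$ is a finite $R$-module.

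Finally, I would deduce reducedness directly from this. If $R$ were not reduced, choose $0\ne a\in R$ with $a^k=0$. Every $ra/s\in Q(R)$ is nilpotent, hence integral over $R$, so $\overline R\supseteq Q(R)\cdot a$. Picking a nonzerodivisor $t\in\m$ (available since $R$ is Cohen--Macaulay of positive dimension), we get $Q(R)\cdot a=t\,(Q(R)\cdot a)\subseteq\m\,(Q(R)\cdot a)$, so Nakayama forces $Q(R)\cdot a$ to be non-finitely generated --- impossible inside the module-finite ring $\overline R$. Hence $R$ is reduced.
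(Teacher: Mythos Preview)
Your proof is correct and takes a genuinely different route from the paper's.

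The paper builds an infinite strictly increasing chain of module-finite birational extensions $S_i$ \emph{starting at} $B(\omega_R)$. The reason is that it wants the $S_i$ themselves to be reflexive, and this is obtained from the $\omega_R$-Ulrich machinery of Sections~4--5: since $B(\omega_R)\subseteq S_i$, each $S_i$ is $\omega_R$-Ulrich (\Cref{I-ulrichextension}) and hence $R$-reflexive (\Cref{omegaulrich}/\Cref{totallyreflexive}). The non-isomorphism of distinct $S_i$ is then extracted via \Cref{faber}. You sidestep all of this by passing to the conductors $I_n=\cond_R(B_n)$ instead of the extensions themselves: the $I_n$ are automatically reflexive regular trace ideals by \Cref{goto2020} (essentially because a dual is always reflexive), so you never need to invoke $\omega_R$-Ulrich modules. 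In fact your argument makes no use of the canonical ideal at all, so it proves the lemma without the hypothesis that $\omega_R$ exists.

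There is also a difference in how the contradiction with ``finite type'' is reached. The paper asserts that the $S_i$ are indecomposable and mutually non-isomorphic; the indecomposability is not argued and is not obvious in the generality stated (a finite birational extension need not be indecomposable as an $R$-module when $R$ is not a domain). Your localization-counting argument---bounding the number of indecomposable summands of any regular ideal by $\sum_{P\in\Min(R)}\ell(R_P)$---cleanly converts ``finitely many indecomposables'' into ``finitely many isomorphism classes among the $I_n$'', after which the injectivity of $\alpha$ in \Cref{goto2020} finishes. This is more robust than the paper's treatment of that step. Finally, you supply a self-contained proof of the ``in particular, $R$ is reduced'' clause, which the paper leaves implicit.
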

\begin{proof}
It suffices to show that $\overline{R}$ is a finite $R$-module. Suppose on the contrary that it is not. By \Cref{B(I)finite}, $\overline{R}$ is not a finite $B(\omega_R)$-module. Thus, we can find an infinite chain of rings $S_i$ inside $\overline{R}$, $B(\omega_R)\subsetneq S_1\subsetneq \dots\subsetneq S_i\subsetneq \dots$ such that each $S_i$ is a finite $R$-module. From \Cref{I-ulrichextension}, the $S_i$ are $\omega_R$-Ulrich and hence by \Cref{totallyreflexive}, they are $R$-reflexive. Consider $S_i\subsetneq S_j$, and let if possible $S_i\simeq S_j$ as $R$-modules. Then they are isomorphic as $S_i$-modules as well. By \Cref{faber}, $S_i=\tr_{S_i}(S_j)\subseteq \cond_{S_i}(S_j)$. So $S_i=S_j$, a contradiction. Therefore the $S_i$'s are indecomposable and mutually non-isomorphic and hence, $\rf_1(R)$ is not of finite type.
\end{proof}

We prove next that $\rf_1(R)$ is of finite type when the conductor has small colength.

 Before stating \Cref{small}, we summarize the cases that we will always reduce to in the proof.

\begin{lem}[Reduction Lemma]\label{reductionlemma}
Let $(R,\m,k)$ be a one-dimensional Cohen-Macaulay local ring. Assume $\overline{R}$ is finite over $R$ and let $\cond$ be the conductor ideal. Further assume that $k$ is infinite. For any ideal $I$, consider the following conditions:
\begin{enumerate}
    \item $\ds \mathfrak{c}\subset I$,
    \item $\ds \mathfrak{c}\subsetneq x:_R \overline{I} \subsetneq x:_R I\subsetneq \m$ where $x$ is a principal reduction of $I$.
    \end{enumerate}
    Let $\rf_1'(R):=\{I\in \rf_1(R)~|~ \text{$I$ satisfies (1) and (2) above}\}$. Then $\rf_1(R)$ is of finite type if and only if $\rf_1'(R)$ is of finite type.
   
\end{lem}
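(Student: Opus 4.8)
The plan is to show that passing from $\rf_1(R)$ to $\rf_1'(R)$ only discards finitely many isomorphism classes, so that finiteness of either one is equivalent. The key fact I would use repeatedly is \Cref{thMai}: since $k$ is infinite, every $I\in\rf_1(R)$ is isomorphic to an ideal containing $\cond$, so up to isomorphism I may always assume condition $(1)$ holds. Thus it suffices to analyze, among ideals $I$ with $\cond\subset I\subsetneq R$, those that fail condition $(2)$, and argue there are only finitely many such up to isomorphism; then $\rf_1(R)$ is of finite type iff the remaining family $\rf_1'(R)$ is.

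First I would fix a principal reduction $x$ of $I$ (available since $k$ is infinite and $I$ is regular) and examine the chain of ideals
\[
\cond \;\subseteq\; x:_R\overline{I}\;\subseteq\; x:_R I\;\subseteq\; \m.
\]
The inclusions $x:_R\overline I\subseteq x:_R I$ and $x:_R I\subseteq \m$ are automatic (the latter because $x\notin(x)\!:\!_R I$ would force $I$ principal, handled separately), and $\cond\subseteq x:_R\overline I$ follows because $\overline I\cdot\overline{R}$ is contained in a principal fractional ideal after dividing by $x$, so $\cond=R:\overline R$ multiplies $\overline I$ into $(x)$; I would spell this out using $\overline I\subseteq x\overline R$ after passing to $J=I/x$ as in the proof of \Cref{thMai}. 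So condition $(2)$ fails precisely when one of these three inclusions is an equality. I would then treat the three degenerate cases:
\begin{enumerate}
\item $x:_R I=\m$: then $\m\cdot I\subseteq(x)$, i.e.\ $I$ is (isomorphic to) an ideal with $\m I\subseteq xI$... more carefully $\mu$-type bound forces $I$ to lie in a bounded family; in fact $I^{**}$ is then forced into a short list (this is the ``Ulrich'' end of the interval, cf.\ \Cref{propalmostGor} and \Cref{ulrichinterval}).
\item $x:_R\overline I=x:_R I$: then $I$ and $\overline I$ have the same dual, so $I^{**}\cong\overline{I}^{**}$; but $\overline I$ is integrally closed, hence reflexive by \Cref{integrallyclosedreflexive}, so $I^{**}\cong\overline I$, and since $I$ is reflexive $I\cong\overline I\in\mathscr I_{\cond}(R)$, a finite set by \Cref{cond_finite}.
\item $x:_R\overline I=\cond$: this pins $\overline I$ (hence a bounded amount of data about $I$) down to finitely many possibilities, since $x:_R\overline I\cong \overline I^{*}$ determines $\overline I$ up to isomorphism among integrally closed ideals containing $\cond$, again a finite set.
\end{enumerate}
Collecting these, the ideals in $\rf_1(R)$ (up to iso) that are \emph{not} accounted for by $\rf_1'(R)$ form a finite union of finite families, which gives one implication; the reverse implication ($\rf_1'(R)\subseteq\rf_1(R)$, so finite type of the latter trivially gives finite type of the former) is immediate.

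The main obstacle I anticipate is case (1), the equality $x:_R I=\m$: unlike cases (2) and (3), this does not directly identify $I$ with an integrally closed ideal, so I cannot simply invoke \Cref{cond_finite}. Here I would need to argue that $\m I\subseteq (x)$ forces $\mu(I)$ and $\ell(R/I)$ to be bounded in terms of invariants of $R$ (e.g.\ $\ell(R/\cond)$ and $\type R$), using the length estimates from \Cref{ulrichinterval} / \Cref{traceformula}, and then conclude that ideals with bounded colength containing $\cond$ fall into finitely many isomorphism classes. One has to also dispose of the principal-ideal case separately (principal reflexive ideals are all isomorphic to $R$), and check the degenerate subcase where $R$ is regular, where everything is trivial. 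Finally I should confirm that the clause ``$\cond\subsetneq$'' (strict) versus ``$\cond\subseteq$'' in condition $(2)$ only costs finitely many more classes — namely those with $x:_R\overline I=\cond$, already handled in case (3).
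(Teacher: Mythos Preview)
Your overall structure is exactly the paper's: reduce to $\cond\subseteq I$ via \Cref{thMai}, set up the chain $\cond\subseteq x:_R\overline I\subseteq x:_R I\subseteq\m$, and show each collapse contributes only finitely many isomorphism classes. Your case (2) is the paper's argument verbatim.

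However, the case you flag as the ``main obstacle'' is in fact the simplest, and you are missing the one-line argument. If $x:_R I=\m$, then by \Cref{convention_remark_2} you have $I^*\cong x:_R I=\m$; since $I$ is reflexive, $I\cong I^{**}\cong\m^*$. That is a \emph{single} isomorphism class, and no length or multiplicity bounds are needed. Your proposed route through bounding $\mu(I)$ and $\ell(R/I)$ is unnecessary and, as you suspect, would be hard to make work. The same dualize-and-use-reflexivity trick is what you should apply uniformly: each equality in the chain identifies either $I^*$ or $\overline I^*$ with a fixed ideal, and reflexivity then pins down $I$ or $\overline I$.

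Your case (3) is also incomplete as stated: knowing $\overline I$ up to isomorphism does not by itself bound $I$. The paper finishes this by noting that $\overline I^*\cong\cond$ forces $\overline I$ and $\cond$ to be isomorphic trace ideals (both lie in $\mathscr I_\cond$, hence in $\reftracecat(R)$ by \Cref{integrallyclosedreflexive}), and isomorphic trace ideals are equal; thus $\overline I=\cond$. Then the sandwich $\cond\subseteq I\subseteq\overline I=\cond$ gives $I=\cond$, again a single ideal. You need this last containment step, not just finiteness of choices for $\overline I$.
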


\begin{proof}
If $\rf_1(R)$ is of finite type then certainly $\rf_1'(R)$ is of finite type. Conversely assume that $\rf_1'(R)$ is of finite type. Let $I\in \rf_1(R)$ and $I$ not principal. From \Cref{thMai} we may assume that $\ds \mathfrak{c}\subseteq I$. By \Cref{condref}, we have 
$$\mathfrak{c}\subseteq x:_R \overline{I}\subseteq x:_R {I}\subseteq \m$$
If $\ds \mathfrak{c}=x:_R\overline{I}$, then by \Cref{convention_remark_2}, $\ds (\overline{I})^{*}\cong \mathfrak{c}$ and hence $\cond \cong \bar{I}$. But both are trace ideals, and hence $\ds \mathfrak{c}=I=\overline{I}$. Similarly if $x:_R I=\m$, by \Cref{convention_remark_2} we have $I\cong \m^*$. Finally, if $\ds x:_R\overline{I}=x:_R I$, we get $I^*= (\overline{I})^*$ by \Cref{convention_remark_2}. Thus $I=\overline{I}$ and hence $I\in \mathscr{I}_{\cond}$. Combining the above observations and finally using \Cref{cond_finite}, we have that $\rf_1(R)$ is of finite type.
\end{proof}

\begin{thm}\label{small} Let $(R,\m,k)$ be a one-dimensional Cohen-Macaulay local ring. Let $\overline{R}$ be finite over $R$ and let $\cond$ be the conductor ideal. Further assume that $k$ is infinite.  Consider the following.
\begin{enumerate}
\item $\ds \ell(R/\mathfrak{c})\leq 3$

\item $\ds \ell(R/\mathfrak{c})=4$ and $R$ has minimal multiplicity.
\end{enumerate}
Then in all the above cases, $\ds \rf_1(R)$ is of finite type.
\end{thm}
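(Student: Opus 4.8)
The strategy is to invoke the Reduction Lemma (\Cref{reductionlemma}) so that we only need to bound the finitely many ``intermediate'' reflexive ideals $I \in \rf_1'(R)$, namely those satisfying $\cond \subsetneq x:_R\overline I \subsetneq x:_R I \subsetneq \m$, up to isomorphism. In each of the listed cases the colength hypothesis on $\cond$ severely constrains the lattice of ideals between $\cond$ and $R$, and hence the possible values of $\overline I$, $x:_R I$, etc. First I would dispose of the trivial/degenerate situations: if $R$ is regular then every ideal is principal and there is nothing to prove, and if $\ell(R/\cond) \le 1$ then $\cond = R$, $R$ is a DVR, again trivial. So assume $R$ is not regular and $2 \le \ell(R/\cond)$.

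\textbf{Case $\ell(R/\cond)\le 3$.} Here there are at most two ``steps'' in any chain of ideals from $\cond$ to $R$, so a chain $\cond \subseteq x:_R\overline I \subseteq x:_R I \subseteq \m \subsetneq R$ has length at most three. If $I \in \rf_1'(R)$, all four inclusions in condition (2) of \Cref{reductionlemma} are strict, which already forces $\ell(R/\cond)$ to be large enough; in fact with $\ell(R/\cond)\le 3$ one checks that $\rf_1'(R)$ must be empty (a chain $\cond\subsetneq A\subsetneq B\subsetneq \m$ inside $R$ with $\ell(R/\cond)\le 3$ is impossible since it would need $\ell(R/\cond)\ge \ell(R/\m)+3\ge 4$). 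Hence $\rf_1(R)$ is of finite type directly from the Reduction Lemma together with \Cref{cond_finite}. (If $\ell(R/\cond)=2$ this also recovers the classification hinted at in \Cref{TR3}: the only trace ideals are $R$ and $\m$, and every reflexive regular ideal is isomorphic to $R$, $\m$, or $\cond$.)

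\textbf{Case $\ell(R/\cond)=4$, $R$ of minimal multiplicity.} Now a strict chain $\cond\subsetneq x:_R\overline I\subsetneq x:_R I\subsetneq \m$ is possible only if each step has colength exactly one, i.e. $\ell(R/\m)=\ell(R/\cond)-3$... wait, this forces $\ell(R/\m)=1$, which is automatic, and $\ell(\m/(x:_RI))=1$, $\ell((x:_RI)/(x:_R\overline I))=1$, $\ell((x:_R\overline I)/\cond)=1$. Dualizing via $x:_R(-)\cong (-)^*$ (\Cref{convention_remark_2}), $I^*$ is an ideal of colength one strictly between $\cond^*$-type data, and $\overline I$ is a colength-two ideal; minimal multiplicity forces $\m^2 = x\m$, which pins down $\m^*=(x):_R\m$ and makes the colength-two integrally closed ideals in the interval behave rigidly (this is exactly the kind of analysis in \Cref{colength2minmult} and \Cref{contract}). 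The finitely many possibilities for $\overline I$ come from \Cref{cond_finite}; for each such $\overline I$, the colength-one ideals $J$ with $\overline I = \overline J$ lying between $\cond$ and $\m$ form a finite set, and one shows each reflexive $I\in\rf_1'(R)$ is determined up to isomorphism by the pair $(\overline I, I^*)$ — using $I \cong I^{**} = R:(R:I)$ from \Cref{fractionalreflexive} — leaving only finitely many isomorphism classes. Combining with the principal ideals and \Cref{cond_finite} yields that $\rf_1(R)$ is of finite type.

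\textbf{Main obstacle.} The genuinely delicate part is the last case: showing that a reflexive regular ideal $I$ in the ``intermediate'' range is determined up to isomorphism by a bounded amount of data (essentially $\overline I$ together with $I^*$ up to isomorphism), rather than merely showing the ambient set of such ideals is finite. One must leverage both minimal multiplicity (to control colength-two ideals and the structure of $\m^*$) and reflexivity $I = R:(R:I)$ to collapse the a priori infinitely many ideals into finitely many isomorphism types; a careful bookkeeping of the short list of possible Hilbert functions / generator counts under $\ell(R/\cond)=4$ is what makes this go through, and verifying no stray infinite family hides among the non-integrally-closed reflexive ideals of colength two is the crux.
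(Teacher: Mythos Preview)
Your handling of case~(1) is correct and is exactly what the paper does: with $\ell(R/\cond)\le 3$ the strict chain $\cond\subsetneq x:_R\overline I\subsetneq x:_R I\subsetneq \m$ cannot exist, so $\rf_1'(R)=\varnothing$ and the Reduction Lemma finishes.

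In case~(2), however, you are working much harder than necessary and the argument as written has gaps. You correctly deduce from the strict chain that $J:=x:_R I$ has colength~$2$. The key observation you are missing is this: $J\cong I^*$ is \emph{reflexive} (duals of reflexive regular ideals are reflexive, cf.\ \Cref{reflem}), and therefore \Cref{colength2minmult} applies directly to $J$, not merely as background ``analysis''. Since $R$ has minimal multiplicity and $J$ is a reflexive ideal of colength~$2$, that proposition says $J$ is either integrally closed or principal; principal is impossible (else $I\cong I^{**}\cong R$, contradicting $I\in\rf_1'(R)$). Hence $J\in\mathscr{I}_{\cond}$, a finite set by \Cref{cond_finite}, and $I\cong J^*$ ranges over finitely many isomorphism classes. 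That is the entire argument.

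By contrast, your route through the pair $(\overline I, I^*)$ is both redundant (since $I\cong (I^*)^*$, the isomorphism class of $I^*$ alone already determines that of $I$) and unjustified at a crucial point: the assertion that ``for each such $\overline I$, the colength-one ideals $J$ with $\overline I=\overline J$ lying between $\cond$ and $\m$ form a finite set'' is false in general --- infinitely many ideals can share an integral closure (indeed this is exactly what happens in \Cref{infinitetrace}). Your claimed colength of $\overline I$ is also not substantiated; the chain only gives the colength of $x:_R\overline I$. The ``main obstacle'' you flag simply evaporates once you apply \Cref{colength2minmult} to $J$ rather than trying to control $\overline I$.
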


\begin{proof}

$(1)$ follows immediately from \Cref{reductionlemma}.

Suppose $(2)$ holds. Note that by \Cref{reductionlemma}, we can assume that $I\in \rf_1'(R)$ and $\ell(R/x:_R I)=2$ where $x$ is a principal reduction of $I$.

Since $J=x:_R I$ is reflexive and $R$ has minimal multiplicity, by \Cref{colength2minmult} we get that $\ds J$ is integrally closed. Thus, $\ds I\cong J^*$ where $J\in \mathscr{I}_{\cond}$ and  the proof is now complete by \Cref{cond_finite}.
\end{proof}
\begin{cor}\label{TR3cor1}
Let $R$ be a complete one-dimensional local domain containing an infinite field such that $\#\tracecat(R)=3$. Then $\rf_1(R)$ is of finite type. 
%Moreover, $\rf(R)$ is of finite type if and only if $\cm(\End(\m))$ is of finite type. 
\end{cor}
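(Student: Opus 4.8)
The goal is to deduce \Cref{TR3cor1} from the machinery already in place, principally \Cref{small} and \Cref{TR3}. The plan is to observe that the hypothesis $\#\tracecat(R)=3$ is exactly condition (c) of \Cref{TR3}(2), which says $\ell(R/\cond)=2$. Since $R$ is a complete one-dimensional local domain, $\overline{R}=k[[t]]$ is a finite $R$-module, so the standing hypotheses of \Cref{small} (namely $\overline{R}$ finite over $R$ and $k$ infinite) are satisfied. Then $\ell(R/\cond)=2\leq 3$ puts us squarely in case (1) of \Cref{small}, and we conclude immediately that $\rf_1(R)$ is of finite type.

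So the proof is essentially a one-line invocation: use \Cref{TR3}(2), equivalence (a)$\iff$(c), to rewrite $\#\tracecat(R)=3$ as $\ell(R/\cond)=2$, and then apply \Cref{small}(1). I would write it as: ``By \Cref{TR3}(2), the condition $\#\tracecat(R)=3$ is equivalent to $\ell(R/\cond)=2$. Since $R$ is a complete local domain, $\overline{R}$ is module-finite over $R$ and $k$ is infinite, so \Cref{small}(1) applies (as $\ell(R/\cond)=2\leq 3$) and gives that $\rf_1(R)$ is of finite type.''

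There is essentially no obstacle here — the corollary is a direct packaging of two previously established results. The only thing one should double-check is that the hypotheses of \Cref{TR3} and of \Cref{small} are simultaneously met: \Cref{TR3} requires $R$ complete local one-dimensional domain containing an infinite field with $\overline{R}=k[[t]]$, and \Cref{small} requires $\overline{R}$ finite over $R$ with $k$ infinite; both are implied by the hypotheses of \Cref{TR3cor1}. Hence the corollary follows without further work.

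\begin{proof}
Since $R$ is a complete one-dimensional local domain containing an infinite field $k$, the integral closure $\overline{R}$ is module-finite over $R$ (indeed $\overline{R}=k[[t]]$). By \Cref{TR3}(2), the hypothesis $\#\tracecat(R)=3$ is equivalent to $\ell(R/\cond)=2$. In particular $\ell(R/\cond)\leq 3$, so condition (1) of \Cref{small} holds, and therefore $\rf_1(R)$ is of finite type.
\end{proof}
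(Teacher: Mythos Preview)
Your proof is correct and follows exactly the same route as the paper's own proof, which simply reads ``This follows from \Cref{TR3} and \Cref{small}.'' You have just made the implicit steps explicit: invoking the equivalence (a)$\iff$(c) in \Cref{TR3}(2) to get $\ell(R/\cond)=2$, and then applying case (1) of \Cref{small}.
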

\begin{proof}
This follows from \Cref{TR3} and \Cref{small}.
\end{proof}
\begin{rem}
\Cref{small} is true if we only assume that $|\Min(\hat{R})|\leq |k|$. To see this, first note that since $R$ is one dimensional and CM, $\bar{R}$ is a finite $R$-module if and only if $R$ is analytically unramified ( see for example \cite[Theorem 4.6]{leuschke2012cohen}). In this case, $\bar{\hat{R}}=\bar{R}\otimes_R \hat{R}$, so $\cond\hat{R}\subseteq \cond_{\hat{R}}$. Therefore $l(\hat{R}/\cond_{\hat{R}})\leq l(\hat{R}/\cond\hat{R})=l(R/\cond)$. Since $\hat{R}$ is reduced, the number of maximal ideals in its integral closure is equal to its number of minimal primes. By \cite[Corollary 3.3]{fouli2018generators}, every ideal of $\hat{R}$ admits a principal reduction. Then from the argument in \Cref{small}, $\rf_1(\hat{R})$ has finite type. By \cite[Proposition 2.5.8]{ega1964ements}, $\rf_1(R)$ has finite type.
\end{rem}
\begin{rem}\label{remGoto}
Since $\cond\in \mathscr{I}_{\cond}$, by \Cref{thMai} and \cite[Proposition 2.9]{gototowards} if $R/\cond$ is Gorenstein, then $\rf_1(R)$ is of finite type. Moreover in this case, by \cite[Theorem 3.7]{corso1998integral}, we have that $\mu(\cond)=\mu(\m)$, so that $R$ necessarily has minimal multiplicity here. In particular finiteness of $\rf_1(R)$ in the cases $\ell(R/\cond)\leq 2$ follows from this as well and in these cases, $R$ necessarily has minimal multiplicity.
\end{rem}

\subsection{Finiteness of $\rf(R)$}

In this subsection, we give a criterion for finiteness of $\rf(R)$ and derive that over seminormal singularities, the category of reflexive modules is of finite type.  

\begin{prop}\label{propfinRef}
Let $(R,\m)$ be a one-dimensional Cohen-Macaulay local ring. Let $S=\End_R(\m)$. If $\cm(S)$ is of finite type, then $\rf(R)$ is of finite type. 
\end{prop}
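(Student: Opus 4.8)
The plan is to show that any indecomposable reflexive $R$-module $M$ is, up to isomorphism, built from a finite list coming from the finitely many indecomposables of $\cm(S)$ together with the (finitely many) indecomposable reflexive modules that are \emph{not} $S$-modules. First I would dispose of the trivial case: if $R$ is regular then $\rf(R)$ consists only of free modules and there is nothing to prove; so assume $R$ is not regular, so that by \Cref{c&m} the maximal ideal $\m$ is a regular reflexive trace ideal and $S=\End_R(\m)=\m:\m$ is a proper finite birational extension of $R$ (note $\m$ has grade one since $R$ is Cohen-Macaulay of dimension one).

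The key dichotomy: let $M$ be an indecomposable reflexive $R$-module. By \Cref{faber}, $M$ is an $S$-module if and only if $\tr(M)\subseteq \cond_R(S)=\m$ (using that $\cond_R(S)=\tr(S)=\m$, since $\m$ is a trace ideal and $\m S=\m$). So either $\tr(M)\subseteq \m$, in which case $M$ is an $S$-module, or $\tr(M)=R$, i.e. $M$ has a free summand. Since $M$ is indecomposable, the latter forces $M\cong R$. Hence every indecomposable reflexive $R$-module is either $\cong R$ or is a module over $S$. Now the hard part: if $M\in\rf(R)$ is an $S$-module, I want $M\in\cm(S)$, so that $M$ decomposes into finitely many indecomposable $S$-modules. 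That $M$ is maximal Cohen-Macaulay over $S$ is immediate — $M$ is torsion-free over $R$ (being reflexive, in particular torsionless) and $\dim S=\dim R=1$, so $\depth_S M=\depth_R M=1=\dim S$. Thus $M\in\cm(S)$, and by hypothesis $\cm(S)$ is of finite type, so $M$ is a direct sum of modules from a fixed finite set $\{N_1,\dots,N_r\}$ of indecomposable $\cm(S)$-modules. Therefore every $M\in\rf(R)$ is a direct sum of copies of $R, N_1,\dots,N_r$, a finite list, so $\rf(R)$ is of finite type.

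The main subtlety I expect is not in the above outline but in the bookkeeping of what ``finite type'' requires: one must make sure the $N_i$, viewed as $R$-modules, are themselves \emph{reflexive} over $R$ (otherwise the summands do not live in $\rf(R)$ and the statement that $\rf(R)$ is of finite type as a subcategory is not quite what one has shown) — but this is automatic, since any direct summand of a reflexive module is reflexive, and each $N_i$ is a summand of some $M\in\rf(R)$. A cleaner way to organize this is to observe directly that $\rf(R)\subseteq \{R\}\text{-add}\,\cup\,\cm(S)$ as subcategories of $\mod(R)$: every reflexive $R$-module is a direct sum of a free module and an $S$-module that is $\cm$ over $S$; conversely those need not all be reflexive, but that inclusion is all we need. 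I would also double-check the edge case where $S$ itself fails to be local — $S=\End_R(\m)$ is semilocal but perhaps not local — which causes no problem since ``finite type'' (finitely many indecomposables up to isomorphism) is insensitive to this, and the Krull--Schmidt property over the complete or Henselian setting, or over a semilocal ring where it still holds for finite modules, lets us conclude; if the paper works without completeness assumptions one should invoke that indecomposable summands of $\cm(S)$-modules are among the $N_i$, which is exactly the definition of finite type for $\cm(S)$.
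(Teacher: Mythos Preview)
Your proof is correct and takes essentially the same approach as the paper: any indecomposable non-free reflexive $M$ has $\tr(M)\subseteq\m=\cond_R(S)$, hence lies in $\cm(S)$ by \Cref{faber}, and $S$-isomorphisms are $R$-isomorphisms. Your version is more detailed (explicitly handling the regular case and verifying $M\in\cm(S)$), and your worries about Krull--Schmidt and reflexivity of the $N_i$ are unnecessary under the paper's definition of finite type, which only requires that every object be a direct sum of modules drawn from some fixed finite set in $\mod(R)$.
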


\begin{proof}
Let $M$ be an indecomposable, non-free reflexive module over $R$. Then $\tr(M)\subset \m =\cond_R(S)$, so $M\in \cm(S)$ by \Cref{faber}. Finally, note that if two $S$-modules are isomorphic, then they are also isomorphic as $R$-modules. 
\end{proof}

\begin{cor}\label{correffull}
Assume that $(R,\m)$ is complete, reduced, one-dimensional and the conductor $\cond$ of $R$ is equal to $\m$. Then $\rf(R)$ is of finite type. 
\end{cor}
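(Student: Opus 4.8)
The plan is to reduce to \Cref{propfinRef}: it suffices to prove that $\cm(S)$ is of finite type, where $S = \End_R(\m)$. The crux is to identify $S$ explicitly, and the point is that the hypothesis $\cond = \m$ forces $S = \overline R$.

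First I would record that $\overline R$ is module finite over $R$: since $R$ is one-dimensional and Cohen-Macaulay, $\m$ contains a nonzerodivisor, so $\cond = \m$ is a regular ideal, and a one-dimensional reduced local ring with regular conductor has module finite normalization. Next, recall the identification $S = \End_R(\m) = \m : \m$ inside $Q(R)$. On one hand, for a regular element $x \in \m$ and any $q \in \m : \m$ we have $qx \in \m \subseteq R$, so $\m : \m \subseteq \tfrac{1}{x}R$; hence $\m : \m$ is a module finite $R$-subalgebra of $Q(R)$, thus integral over $R$, which gives $S \subseteq \overline R$. On the other hand, $\cond$ is an ideal of $\overline R$, so $\overline R\,\m = \overline R\,\cond = \cond = \m$, which says exactly $\overline R \subseteq \m : \m = S$. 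Therefore $S = \overline R$.

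It then remains to observe that $\cm(\overline R)$ is of finite type. Since $R$ is complete, reduced and one-dimensional, $\overline R$ is a finite direct product of complete discrete valuation rings; every maximal Cohen-Macaulay $\overline R$-module is then a finite direct sum of copies of these DVR factors, so up to isomorphism there are only finitely many indecomposables. Thus $\cm(S) = \cm(\overline R)$ is of finite type, and \Cref{propfinRef} yields that $\rf(R)$ is of finite type.

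There is no real obstacle here; the only thing to get right is the equality $\End_R(\m) = \overline R$ under the hypothesis $\cond = \m$, after which the conclusion follows at once from \Cref{propfinRef} together with the well-known structure of the normalization of a complete reduced curve singularity.
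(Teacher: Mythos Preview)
Your proposal is correct and follows exactly the paper's approach: identify $S=\End_R(\m)$ with $\overline{R}$, note that $\overline{R}$ is a product of DVRs so $\cm(S)$ has finite type, and invoke \Cref{propfinRef}. You simply supply the details behind the equality $\End_R(\m)=\overline{R}$ that the paper states without justification.
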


\begin{proof}
By assumption $S=\End_R(\m) = \bar R$, which is a product of DVRs and therefore $\cm(S)$ is of finite type. Thus, \Cref{propfinRef} applies.
\end{proof}

As a consequence of the above, we can study finiteness of $\rf(R)$ for `seminormal' rings. R. Swan \cite{swan1980seminormality} defined a seminormal ring as a reduced ring $R$ such that whenever $b,c\in R$ satisfy $b^3=c^2$, there is an $a\in R$ with $a^2=b,a^3=c$. For a detailed exposition on various results related to seminormality (including generalizations to the above definition), we refer the reader to \cite{vitulli2011weak}. Seminormality has also been studied in the context of studying $F$-singularities in characteristic $p>0$. For instance, $F$-injective rings constitute a class of examples for seminormal rings \cite[Corollary 3.6]{datta2019permanence}.

\begin{cor}\label{seminormalcor}
Suppose that  $(R,\m)$ is a seminormal complete reduced local ring of dimension one. Then $\rf(R)$ is of finite  type. 
\end{cor}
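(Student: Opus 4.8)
The plan is to reduce everything to the situation of \Cref{correffull} by proving that seminormality forces the conductor $\cond$ to equal $\m$. First I would record the easy structural facts: since $R$ is complete it is excellent, so $\overline{R}$ is a finite $R$-module; being normal, semilocal, complete and one-dimensional, $\overline{R}$ is a finite product $V_1\times\cdots\times V_s$ of complete DVRs with Jacobson radical $\mathfrak{J}=\m_1\times\cdots\times\m_s$. As usual $\cond=R:\overline{R}$ is an ideal of $\overline{R}$ contained in $R$; since $\overline{R}/R$ is a torsion $R$-module, $\cond$ contains a nonzerodivisor of $R$, which is also a nonzerodivisor of $\overline{R}$; and $\cond\subseteq\mathfrak{J}$ because $\mathfrak{J}$ is the intersection of the maximal ideals of $\overline{R}$. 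If $R=\overline{R}$ then $R$ is a complete DVR and $\rf(R)=\cm(R)$ consists of free modules, so from now on assume $R\neq\overline{R}$.

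The key step is the claim that seminormality implies $\cond$ is a radical ideal of $\overline{R}$. Given $x\in\overline{R}$ with $x^2\in\cond$ and any $y\in\overline{R}$, one has $(xy)^2=x^2y^2\in\cond\subseteq R$ and $(xy)^3=x^2(xy^3)\in\cond\subseteq R$, so setting $b=(xy)^2$ and $c=(xy)^3$ gives $b,c\in R$ with $b^3=c^2$; seminormality then produces $a\in R$ with $a^2=b$ and $a^3=c$. Localizing at each minimal prime of the reduced ring $\overline{R}$ (in a field, $a^2=u^2$ and $a^3=u^3$ force $a=u$) shows $a=xy$ in $\overline{R}$, hence $xy=a\in R$; as $y$ was arbitrary, $x\in\cond$. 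Thus $\cond=\prod_i J_i$ with each $J_i$ a radical ideal of $V_i$. Since $\cond$ contains a nonzerodivisor of $\overline{R}$ no $J_i$ is zero, and since $R$ has no nontrivial idempotents while $R\neq\overline{R}$, no $J_i$ equals $V_i$; therefore $J_i=\m_i$ for all $i$, i.e. $\cond=\mathfrak{J}$. Consequently $R/\cond$ embeds into $\overline{R}/\mathfrak{J}=\prod k_i$, so it is reduced; it is also Artinian local since $\cond$ is $\m$-primary; hence $R/\cond$ is a field and $\cond=\m$.

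With $\cond=\m$ established, \Cref{correffull} applies directly and yields that $\rf(R)$ is of finite type; concretely, $\cond=\m$ forces $\End_R(\m)=\m:\m=\overline{R}$, a product of complete DVRs, so $\cm(\overline{R})$ has finite type and \Cref{propfinRef} finishes the argument. The only genuine obstacle is the claim that seminormality makes $\cond$ a radical ideal of $\overline{R}$; this is classical and available in the cited references on seminormality, but I would include the short self-contained argument above. Everything else is routine bookkeeping about products of DVRs.
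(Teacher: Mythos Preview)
Your proof is correct and follows the same route as the paper: show that seminormality forces $\cond=\m$ and then invoke \Cref{correffull}. The only difference is that the paper simply cites \cite[Proposition 2.10(1)]{vitulli2011weak} for the implication $\cond=\m$, whereas you supply a self-contained argument (radicality of $\cond$ in $\overline{R}$ via Swan's definition, then the DVR-product analysis); both land in exactly the same place.
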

\begin{proof}
By \cite[Proposition 2.10(1)]{vitulli2011weak} (with $A=R, B=\overline{R}$), we get that $\cond=\m$, so \Cref{correffull} applies.  
\end{proof}

\section{Further applications, examples and questions}\label{furtherapps}

We begin this section by discussing notions of being `close to Gorenstein' as promised in \Cref{closetorem}. Throughout this section, $(R,\m,k)$ denotes a one-dimensional Cohen Macaulay local ring.

\begin{dfn}

$R$ is called \textbf{almost Gorenstein} if $\ds a:_R \omega_R \supseteq \m$ for some principal reduction $a$ of $\omega_R$. $R$ is called \textbf{nearly Gorenstein} if $\tr(\omega_R)\supseteq \m$. 
\end{dfn}
These classes of rings have attracted a lot of attention lately, the reader can refer to \cite{herzog2019trace}, \cite{barucci1997one}, \cite{goto2013almost}, \cite{goto2015almost}, \cite{dao2020trace} amongst other sources.

In our language:
\begin{prop} Assume that $(R,\m)$ is a one-dimensional Cohen-Macaulay local ring which has a canonical ideal $\omega_R$ with some principal reduction $a$.
$R$ is almost Gorenstein if and only if $\m$ is $\omega_R$-Ulrich.
\end{prop}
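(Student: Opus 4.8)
The plan is to unwind both definitions and observe that the equivalence falls out of \Cref{propalmostGor} together with \Cref{I-ulrich}. First I would fix a principal reduction $a$ of $\omega_R$. Since $\omega_R$ is a regular (in particular $\m$-primary, grade one) ideal and $R$ is one-dimensional Cohen--Macaulay, the reduction $(a)$ is $\m$-primary, so $a$ is a nonzerodivisor and $a\in(a)\subseteq\omega_R$; thus multiplication by $a$ gives an embedding $R\hookrightarrow\omega_R$. Next I would record, via \Cref{I-ulrich} applied with $I=\omega_R$ and reduction $x=a$, that the assertion ``$\m$ is $\omega_R$-Ulrich'' is equivalent to the equality $\m\omega_R=a\m$. (I would also remark that this property is independent of the chosen canonical ideal, since canonical ideals agree up to isomorphism and multiplication by a unit of $Q(R)$, and $I$-Ulrichness is preserved under isomorphism, as already noted after \Cref{defIulrich}.)

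For the implication ``$\m$ is $\omega_R$-Ulrich $\Rightarrow$ $R$ is almost Gorenstein'', I would argue directly: if $\m$ is $\omega_R$-Ulrich then $\m\omega_R=a\m\subseteq(a)$ by the previous paragraph, hence $\m\subseteq a:_R\omega_R$. As $a$ is a principal reduction of $\omega_R$, this is precisely the defining condition for $R$ to be almost Gorenstein.

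For the converse, suppose $R$ is almost Gorenstein, so there is a principal reduction $a'$ of $\omega_R$ with $\m\subseteq a':_R\omega_R$, that is $\m\omega_R\subseteq(a')$. As above $a'\in\omega_R$ is a nonzerodivisor, so multiplication by $a'$ is an injection $R\hookrightarrow\omega_R$ whose cokernel $\omega_R/(a')$ has finite length and is annihilated by $\m$, hence is isomorphic to $k^{\oplus s}$ for some $s\ge 0$. This produces a short exact sequence $\ses{R}{\omega_R}{k^{\oplus s}}$, so \Cref{propalmostGor} applies with $I=\omega_R$ and yields that $\m$ is $\omega_R$-Ulrich.

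I do not expect a genuine obstacle: the only point needing (minor) care is matching the existential quantifiers, since ``almost Gorenstein'' demands \emph{some} principal reduction while \Cref{I-ulrich} is phrased with \emph{a} principal reduction — but this is harmless, because the forward direction uses the given $a$ directly, and the backward direction simply feeds the witnessing $a'$ into \Cref{propalmostGor}, whose proof already handles an arbitrary (possibly finite) residue field. All the substantive content sits in \Cref{propalmostGor} and \Cref{I-ulrich}, both already established.
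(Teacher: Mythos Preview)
Your proof is correct and follows the same approach as the paper. The paper's own proof is the single line ``This follows from \Cref{propalmostGor},'' leaving the reader to unpack exactly what you have written out: the forward direction (almost Gorenstein $\Rightarrow$ $\m$ is $\omega_R$-Ulrich) is \Cref{propalmostGor} applied to the short exact sequence $\ses{R}{\omega_R}{k^{\oplus s}}$ coming from $\m\omega_R\subseteq(a')$, and the reverse direction is the immediate observation $\omega_R\m=a\m\subseteq(a)$ via \Cref{I-ulrich}.
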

\begin{proof}
This follows from \Cref{propalmostGor}.
\end{proof}

It is clear from \Cref{traceformula} that in this situation we get $\ds \tr(\omega_R)\supseteq \m$. This provides a proof for a well-known fact that almost Gorenstein rings are nearly Gorenstein. 

One would expect that for rings close to Gorenstein, it would be easier to find reflexive modules. We now give supporting evidence for that statement.

\begin{prop}\label{almostgorprop}
Let $(R,\m)$ be almost Gorenstein and let $I$ be a regular ideal with $S=\End_R(I)$. 

(1) If $S$ is reflexive and strictly larger than $R$, then $\cm(S)\subseteq \rf(R)$. Thus $IM\in \rf(R)$ for any $M\in \cm(R)$. In particular all powers of $I$ are reflexive.

(2) If $I$ is a proper trace ideal, then $IM$ is reflexive for any $M\in \cm(R)$. In particular $I$ and all of its powers are reflexive. 
\end{prop}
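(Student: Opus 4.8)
The plan is to reduce both parts to \Cref{totallyreflexive} applied to the birational extension $S=\End_R(I)\cong I:I$. Recall that $S$ is always a module-finite $R$-algebra lying in $\cm(R)$, since $\Hom_R(I,I)$ is a finitely generated torsion-free $R$-module. We may assume throughout that $R$ is not regular, since otherwise every ideal is principal and all assertions are immediate.

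For (1): The crucial observation is that, since $R$ is almost Gorenstein, the inclusion $\m\omega_R\subseteq(a)$ (where $a$ is a principal reduction of $\omega_R$, so $(a)\cong R$) yields an exact sequence $\ses{R}{\omega_R}{k^{\oplus s}}$, forcing $\m$ to be $\omega_R$-Ulrich by \Cref{propalmostGor}; hence $\m=\tr(\m)\subseteq b(\omega_R)$ by \Cref{refIUl1}. Now $\tr(S)=\cond_R(S)=R:S$ by \Cref{goto2020}, and since $S\supsetneq R$ the conductor $R:S$ is a proper ideal of $R$, so $\tr(S)\subseteq\m\subseteq b(\omega_R)$. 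Combined with the hypothesis $S\in\rf(R)$, this is exactly condition (6) of \Cref{totallyreflexive}, so $\cm(S)\subseteq\rf(R)$. To finish, for $M\in\cm(R)$ realize $M$ as an $R$-submodule of the free $Q(R)$-module $V=M\otimes_RQ(R)$ (possible since $M$ is torsion-free); then $IM$ is a finitely generated $R$-submodule of $V$ on which $S\cong I:I$ acts by multiplication, because $(I:I)I=I$, so $IM$ is an $S$-module that is torsion-free, hence maximal Cohen-Macaulay, over $R$ and therefore over $S$. Thus $IM\in\cm(S)\subseteq\rf(R)$, and in particular $I^n=I\cdot I^{n-1}\in\rf(R)$ for all $n\geq1$ since $I^{n-1}\in\cm(R)$.

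For (2): It suffices to verify the two hypotheses of (1) for $S=\End_R(I)$. Since $I$ is a trace ideal, \Cref{traceproperties}(2) gives $I:I=R:I$, so $S=\End_R(I)\cong I^{*}$ as $R$-modules (\Cref{convention_remark_2}); as $R$ has a canonical ideal it is generically Gorenstein, so $I^{*}$ is reflexive by \Cref{reflem} (see \Cref{refrem}), giving $S\in\rf(R)$. For $S\supsetneq R$, apply $\Hom_R(-,R)$ to $\ses{I}{R}{R/I}$: since $R/I$ is a nonzero module of finite length (as $I$ is regular and proper) and $R$ is torsion-free, we obtain an exact sequence $\ses{R}{I^{*}}{\Ext^1_R(R/I,R)}$ with $\Ext^1_R(R/I,R)\neq0$, because $\operatorname{grade}(\m,R)=\depth R=1$. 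Hence $R\subsetneq I^{*}\cong S$, part (1) applies, and $IM$ is reflexive for every $M\in\cm(R)$; in particular $I=IR$ and all of its powers are reflexive.

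The main obstacle is the middle step of (1): pinning down $b(\omega_R)$ well enough to contain $\tr(S)$. This is handled by translating the almost Gorenstein hypothesis into the $\omega_R$-Ulrichness of $\m$ (hence $\m\subseteq b(\omega_R)$) and then observing that any \emph{proper} reflexive birational overring of $R$ has conductor contained in $\m$. In (2) the only real point is verifying $\End_R(I)\supsetneq R$, for which the $\Ext^1$ computation above is the cleanest argument.
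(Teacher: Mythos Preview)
Your proof is correct and follows essentially the same approach as the paper's. The only cosmetic difference is that the paper invokes \Cref{refthm}(7) directly (using $\cond_R(S)\subseteq\m\subseteq(a):_R\omega_R$ from the definition of almost Gorenstein), whereas you first pass through $\m\subseteq b(\omega_R)$ and then apply \Cref{totallyreflexive}(6); these are equivalent packagings of the same idea. Your added detail for $IM\in\cm(S)$ and the $\Ext^1$ argument for $S\supsetneq R$ in part (2) simply make explicit what the paper's one-line proof leaves to the reader.
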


\begin{proof}
As $S$ is reflexive and $\cond_R(S)\subset \m\subseteq R: \frac{\omega_R}{a}$ by hypothesis, we are done by \Cref{refthm}. Since $IM\in \cm(S)$, the proof of (1) is complete. For part (2), just note that $S=I^*$ is reflexive and not equal to $R$. 
\end{proof}

In particular, if $R$ is almost Gorenstein, $\m^n$ is reflexive for all $n$. 

\begin{rem} Suppose $R$ is almost Gorenstein but not Gorenstein, and take $I=\omega_R$. $I$ is not reflexive. Note that here $\End_R(I)$ is reflexive but does not contain $R$ properly. Thus the conditions on $I$ in \Cref{almostgorprop} are needed. 

\end{rem}

The following example shows that in general, $\m^2$ can fail to be reflexive.

\begin{ex}
Let $\ds R=k[[t^5,t^6,t^{14}]]$. Then $\m^2  =(t^{10},t^{11},t^{12},t^{15},t^{16},\dots )$. Thus $(\m^2)^* = (1, t^4,t^5,t^6,\dots)$ and  $t^{14}\in (\m^2)^{**}$. But $t^{14}\notin \m^2$.
\end{ex}

With a bit more work one can even find an example where none of $\m^n$, $n\geq 2$ is reflexive. 
\begin{ex}
Let $R=k[[t^6,t^8,t^{11},t^{13},t^{15}]]$. Then $\m^2=(t^{12},t^{14},t^{16}, \dots)$. Thus, $(\m^2)^*=t^{-12}(t^{11},t^{12},t^{13},t^{14},t^{15},t^{16})=t^{-12}\cond$. Thus, $t^{13}\in (\m^2)^{**}$ but $t^{13}\not \in \m^2$, so $\m^2$ is not reflexive. However, note that $t^{6}$ is a minimal reduction of $\m$ and $t^6\m^2=\m^3$. Thus, none of the higher powers of $\m$ can be reflexive. 
\end{ex}

Next, we classify when $\rf(R)$ is of finite type for almost Gorenstein rings. 
\begin{prop}\label{almostgorequiv}
Suppose that $(R,\m)$ is almost Gorenstein. Let $S=\End_R(\m)$. Then $\rf(R)$ is of finite type if and only if $\cm(S)$ is of finite type.

\end{prop}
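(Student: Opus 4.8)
The plan is to prove the two implications separately. The forward direction — if $\cm(S)$ is of finite type then $\rf(R)$ is of finite type — is precisely \Cref{propfinRef}, and makes no use of $R$ being almost Gorenstein; so the real content is the converse. First I would dispose of the trivial case: if $R$ is regular then $S=\End_R(\m)=R$ and $\cm(S)=\cm(R)=\rf(R)$, so both sides hold. Assuming $R$ is not regular, $\m$ is a regular reflexive trace ideal by \Cref{c&m}, hence $S=\End_R(\m)$ is a reflexive birational extension of $R$ by \Cref{goto2020}, and $S\supsetneq R$ since $\m$ is not principal; moreover $\cond_R(S)=\m$. Because $R$ is almost Gorenstein, \Cref{almostgorprop}(1) applied to $I=\m$ gives the key inclusion $\cm(S)\subseteq\rf(R)$.

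The heart of the matter is to match direct-sum decompositions over $R$ with those over $S$. The point I would isolate is that the forgetful functor $\cm(S)\to\cm(R)$ is fully faithful, with essential image the full subcategory of reflexive $R$-modules having no free summand. Full faithfulness rests on the standard fact that, since $R\subseteq S$ is birational, any $R$-linear map between finitely generated $S$-modules that are torsion-free over $R$ is automatically $S$-linear: after applying $-\otimes_R Q(R)$ such a map becomes $Q(R)$-linear, and $S\subseteq Q(R)=Q(S)$ acts on these localizations compatibly with its action on the modules, which embed into them; every object of $\cm(S)$ is torsion-free over $R$, being maximal Cohen–Macaulay over $R$ as $S\in\cm(R)$. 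Essential surjectivity onto the no-free-summand part uses \Cref{faber}: a reflexive $R$-module $M$ carries an $S$-module structure as soon as $\tr_R(M)\subseteq\cond_R(S)=\m$, and $\tr_R(M)\subseteq\m$ is equivalent to $M$ having no free summand. I expect this paragraph — reconciling decompositions over $R$ and over $S$ — to be the main obstacle, though it is more a matter of assembling standard facts about birational extensions than of a genuinely new argument.

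Granting this, the proof finishes quickly. Let $N_1,\dots,N_r$ be the indecomposable reflexive $R$-modules (finitely many by hypothesis), and let $\mathcal G$ be the subset of those not isomorphic to $R$; by \Cref{faber} each member of $\mathcal G$ lies in $\cm(S)$. Given $M\in\cm(S)$, the inclusion above gives $M\in\rf(R)$, so $M$ is, as an $R$-module, a finite direct sum of the $N_i$'s; none of these summands can be free, for otherwise $M$ would have a free summand, forcing $\tr_R(M)=R\not\subseteq\m$, contrary to \Cref{faber} applied to the $S$-module $M$. Hence $M$ is an $R$-module direct sum of members of $\mathcal G$, and by full faithfulness this is a decomposition over $S$ as well. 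Since $\mathcal G$ is finite, $\cm(S)$ is of finite type.
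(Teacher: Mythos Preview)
Your proof is correct and follows the same approach as the paper, which simply invokes \Cref{propfinRef} for one direction and \Cref{almostgorprop}(1) for the other. You supply the detail the paper leaves implicit: the forgetful functor $\cm(S)\to\cm(R)$ is fully faithful for a birational extension, so $R$-module decompositions of objects in $\cm(S)$ are automatically $S$-module decompositions, making the passage from finite type of $\rf(R)$ to finite type of $\cm(S)$ rigorous.
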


\begin{proof}
The `if' direction is \Cref{propfinRef}. The other direction follows from \Cref{almostgorprop}$(1)$. 
\end{proof}

\begin{rem}
 Let $S=\End_R(\m)$. Using the notations in \cite{kobayashi2017syzygies}, we thus get $\cm(S)\subset \rf(R) = \syz\cm(R)$, so $\cm(S)= \syz\cm'(R)$ by \Cref{faber}. It follows that  $ \syz\cm'(R)$ has finite type if and only if $\cm(S)$ has finite type. This recovers results by T. Kobayashi \cite[Corollary 1.3]{kobayashi2017syzygies}.
\end{rem}
\begin{prop}\label{Kobayashiobs}
Let $(R,m,k)$ be a one-dimensional, reduced, complete local ring containing $\mathbb{Q}$ and further assume that $k$ is algebraically closed. Consider the following statements. \begin{itemize}
\item[(a)] $\cm(R)$ is  of finite type.
\item[(b)] $\rf(R)$ is  of finite type.
\item[(c)] $\rf_1(R)$ is  of finite type.
\item[(d)] $\reftracecat(R)$ is finite. 
\end{itemize}
If $R$ is Gorenstein, then all the four statements are equivalent. If $R$ is an almost Gorenstein domain, then $(b),(c)$ and $(d)$ are equivalent. 
\end{prop}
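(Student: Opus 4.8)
The plan is to separate the ``soft'' finite-type bookkeeping from a single descent statement for indecomposables over endomorphism rings. Two elementary observations drive the soft part. First, a regular trace ideal is determined by its isomorphism class: if $I\cong J$ with $I=\tr(I)$ and $J=\tr(J)$, then $I=\tr(I)=\tr(J)=J$. Second, a regular ideal has rank one at every minimal prime of $R$, so over a domain it is indecomposable and in general its class vector at the minimal primes is fixed; hence a finite-type hypothesis on $\rf_1(R)$ or $\rf(R)$ leaves only finitely many isomorphism classes of regular reflexive ideals. Combining these yields $(b)\Rightarrow(c)\Rightarrow(d)$ in both halves of the statement. When $R$ is Gorenstein, $(a)\Leftrightarrow(b)$ is immediate from $\rf(R)=\cm(R)$ (\Cref{closetorem}(3)). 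So the real content is $(d)\Rightarrow(a)$ for $R$ Gorenstein and $(d)\Rightarrow(b)$ for $R$ an almost Gorenstein domain.

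For the almost Gorenstein domain case I would try to prove that $M\mapsto\tr_R(M)$ is a bijection from isomorphism classes of indecomposable reflexive $R$-modules onto $\reftracecat(R)$ (sending $R$ to $R$). It is well defined since $\tr_R(M)=R$ forces $M\cong R$, while for $M$ non-free $\tr_R(M)$ is a proper trace ideal, hence reflexive by \Cref{almostgorprop}(2). It is surjective because for $J\in\reftracecat(R)$ the extension $T:=\End_R(J)$ is reflexive with $\tr_R(T)=\cond_R(T)=J$ (\Cref{goto2020}) and is indecomposable over $R$ (rank one over a domain). The crux is injectivity: if $M$ is indecomposable reflexive with $\tr_R(M)=J$ then $M\in\cm(T)$ by \Cref{faber}, and one must show that $T$ splits off $M$ as a $T$-module, so that indecomposability over $R$ forces $M\cong T$. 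Granting this, $\rf(R)$ has at most $\#\reftracecat(R)$ indecomposables, which proves $(d)\Rightarrow(b)$ (indeed $(b)\Leftrightarrow(c)\Leftrightarrow(d)$ simultaneously). The split reduces to deducing $\tr_T(M)=T$ from $\tr_R(M)=\tr_R(T)$; here I would exploit that $T$ is automatically a strongly reflexive extension of $R$ (\Cref{refthm}, using $\m\subseteq b(\omega_R)$, which follows from \Cref{propalmostGor} and \Cref{refIUl1}), so that $\cond_R(T)\cong\omega_T$ (\Cref{totallyreflexive}), together with the identification $\Hom_R(M,R)=\Hom_T(M,\cond_R(T))$ and the vanishing $\Ext^1_T(M,\omega_T)=0$ for $M\in\cm(T)$, to compare the $R$- and $T$-traces of $M$.

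For $R$ Gorenstein I would instead induct on $\ell(\overline R/R)$. Via \Cref{goto2020} and the fact that over a Gorenstein ring every module-finite birational extension is maximal Cohen--Macaulay, hence reflexive, hypothesis $(d)$ says that $R$ has only finitely many overrings $R\subseteq T\subseteq\overline R$, and the same then holds for each such $T$. The base case $T=\overline R$ is a finite product of complete discrete valuation rings (as $R$ is complete, reduced and $k=\overline k$), so $\cm(\overline R)$ is of finite type. For the inductive step, set $T_1=\End_T(\m_T)\supsetneq T$, observe $\ell(\overline R/T_1)<\ell(\overline R/T)$, and apply the inductive hypothesis to get $\cm(T_1)$ of finite type; one then needs that every indecomposable $N\in\cm(T)$ with $N\not\cong T$ is a $T_1$-module, so that every indecomposable object of $\cm(T)$ is $T$ or lies in $\cm(T_1)$ and $\cm(T)$ is of finite type; finally $\cm(R)$ is of finite type by \Cref{propfinRef} with $T=\End_R(\m)$. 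The step $N\not\cong T\Rightarrow N\in\cm(T_1)$ is exactly \Cref{faber} over $T$ when $T$ is Gorenstein, but has to be re-established in general, using $\cond_T(T_1)=\m_T$ and that $\cond_R(T)$ plays the role of a canonical module of $T$.

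The main obstacle in both halves is precisely this descent statement — controlling the $R$-trace (equivalently, the reflexive closure) of a module through a base ring $T$ that is itself not Gorenstein, so that $\cm(T)$ is governed by $\cm(\End_T(\m_T))$ up to the single module $T$. I expect that to be the technical heart; the remaining steps are assembly of results already developed in Sections $2$--$6$, together with the routine rank/trace bookkeeping above.
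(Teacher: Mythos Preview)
Your proposal diverges substantially from the paper's proof, and both of your key steps contain genuine gaps.

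The paper does not attempt a direct structural argument. In both halves it invokes \cite[Theorem~4.13(ii)(a)]{leuschke2012cohen}, which (under the hypotheses complete, reduced, containing $\mathbb{Q}$, $k$ algebraically closed) says that if $\cm(T)$ is of infinite type then there are infinitely many pairwise non-isomorphic rings between $T$ and $\overline{T}$. For $R$ Gorenstein one applies this to $T=R$; all such overrings are $R$-reflexive, and \Cref{goto2020} converts them into infinitely many elements of $\reftracecat(R)$, giving $\neg(a)\Rightarrow\neg(d)$. For $R$ an almost Gorenstein domain one first uses \Cref{almostgorequiv} to pass from $\rf(R)$ of infinite type to $\cm(S)$ of infinite type with $S=\End_R(\m)$, applies the Leuschke--Wiegand theorem to $S$, and then uses \Cref{almostgorprop}(1) to see that the resulting infinitely many overrings of $S$ are $R$-reflexive, hence yield infinitely many reflexive trace ideals via \Cref{goto2020}. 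The strong hypotheses in the statement are there precisely to make the Leuschke--Wiegand classification available; your proposal never uses them, which is already a warning sign.

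Your inductive argument in the Gorenstein case breaks at the descent step. To conclude that an indecomposable $N\in\cm(T)$ with $N\not\cong T$ lies in $\cm(T_1)$ you need $\tr_T(N)\subseteq\m_T$ \emph{and} $N$ reflexive over $T$ in order to invoke \Cref{faber}. But once $T$ is no longer Gorenstein, $N\in\cm(T)$ need not be $T$-reflexive, and there is no reason for $N$ to acquire a $T_1$-structure; the induction simply does not close. This obstruction is exactly why the Drozd--Roiter/Green--Reiner criterion is nontrivial and is packaged into the cited theorem rather than reproved.

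Your bijection in the almost Gorenstein case is too strong. You are asserting that an indecomposable reflexive $M$ with $\tr_R(M)=\cond_R(T)$ must be isomorphic to $T$; equivalently, that $T$ is the \emph{only} indecomposable $T$-module whose $R$-trace fills up $\cond_R(T)$. Nothing in the machinery you cite (strong reflexivity of $T$, $\omega_T\cong\cond_R(T)$, the Hom identification, or $\Ext^1_T(-,\omega_T)=0$) produces a splitting $T\hookrightarrow M$; the vanishing of $\Ext^1_T(M,\omega_T)$ goes the wrong way for that. In general $T$ can have many indecomposable MCM modules whose $R$-trace equals $\cond_R(T)$, so injectivity fails.
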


\begin{proof} Assume first that $R$ is Gorenstein. Clearly, $(a)\implies(b) \implies (c)\implies (d)$. Now suppose $\cm(R)$ is not of finite type. Then by \cite[Theorem 4.13(ii)(a)]{leuschke2012cohen} and \Cref{goto2020}, we get $\reftracecat(R)$ is not of finite type. This completes the first part of the proof. 
%The proof of (9.5.2) of Yoshino's book shows that there exist infinitely many finite birational extensions of $R$ which are non-isomorphic as $R$-modules.

Next assume that $R$ is an almost Gorenstein domain. We only need to show $(d)\implies (b)$. Assume that $\rf(R)$ is of infinite type, and hence $\cm(S)$ is of infinite type by \Cref{almostgorequiv}, where $S=\End_R(\m)$. Thus, there are infinitely many non-isomorphic finite reflexive birational extensions of $R$ by \cite[Theorem 4.13(ii)(a)]{leuschke2012cohen} and by \Cref{almostgorprop}$(1)$. The proof is now complete using \Cref{goto2020}.
\end{proof}

Next, we classify birational reflexive extensions of $R$ that are Gorenstein. Our result was inspired by  and extends \cite[Theorem 5.1]{goto2013almost}.

\begin{thm}\label{Gorclassificationreflexive}
Suppose that $R$ is a one-dimensional Cohen-Macaulay local ring with a canonical ideal $\omega_R$. Let $S\in \rf(R)$ be a birational extension of $R$. Let $I=\cond_R(S)$. The following are equivalent:
\begin{enumerate}
    \item $S$ is Gorenstein. 
    \item $I$ is $I$-Ulrich and $\omega_R$-Ulrich. That is $I\cong I^2\cong I\omega_R$.
\end{enumerate}
\end{thm}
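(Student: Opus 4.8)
The plan is to exploit the conductor-theoretic description of Gorensteinness together with the $I$-Ulrich machinery built up in Section~4. Since $S$ is a finite birational extension of $R$ which is reflexive, Lemma~\ref{goto2020} tells us that $I=\cond_R(S)$ is a reflexive regular trace ideal of $R$ and that $\End_R(I)=S$; also $IS=I$ (the conductor is an $S$-ideal). The key ring-theoretic input is the classical fact that $S$ is Gorenstein if and only if $\omega_S$ is a free $S$-module of rank one. Now $\omega_S$ can be computed as $\Hom_R(S,\omega_R)$, and using Remark~\ref{convention_remark} this is $\omega_R:S$ inside $Q(R)$; similarly $\Hom_R(S,R)=R:S=I$ (the last equality because $I$ is the conductor, so $R:S=\cond_R(S)=I$). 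Thus the two canonical-module computations reduce to understanding the fractional ideals $\omega_R:S$ and $I$ as $S$-modules.

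First I would prove $(1)\Rightarrow(2)$. If $S$ is Gorenstein, then $\omega_S\cong S$ as $S$-modules. Via Lemma~\ref{5.1helplemma} (or directly, $\omega_S=\Hom_R(S,\omega_R)$ and $\Hom_R(S,R)=I$), dualizing the isomorphism $\omega_S\cong S$ into $\omega_R$ and into $R$ shows $S$ is strongly reflexive in the sense of \Cref{dfnstrong}: indeed $\Hom_R(S,R)\cong\Hom_R(S,\omega_R)$, which is condition (3) of \Cref{totallyreflexive}, hence $S$ is $\omega_R$-Ulrich, i.e.\ $\omega_RS\cong S$, i.e.\ $I$ is $\omega_R$-Ulrich (by \Cref{I-UlrichThm} applied to the $R$-module $I\cong S^*$, or more simply $\omega_R S\cong S$ gives $\omega_R I\cong I$ since $I$ is an $S$-module isomorphic to $S^*$). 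For the $I$-Ulrich part: since $I$ is an $S$-ideal and $S=\End_R(I)=I:I$, we have $I^2\subseteq I$ and $I\cdot(I:I)=I$, and in fact because $S$ is Gorenstein its conductor ideal $I$ is itself a canonical ideal of... here I would instead argue directly that $S$ Gorenstein forces $I^2\cong I$: $I$ is a reflexive $S$-module (being the conductor, it equals $R:S$ which is $S$-reflexive when $S$ is Gorenstein since over a Gorenstein ring every MCM module is reflexive), and $I$ is a trace ideal, so by \Cref{xI}-type reasoning over $S$, or by noting $B(I)\subseteq S$ and $I$ stable under the $S$-action, $I^2=xI$ for a principal reduction $x$; combined with reflexivity over $S$ this yields $I\cong I^2$.

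For $(2)\Rightarrow(1)$: assume $I\cong I^2\cong I\omega_R$. From $I\cong I\omega_R$ and $I\cong S^*$ we get that $S$ is $\omega_R$-Ulrich, so by \Cref{totallyreflexive} $\omega_S\cong S^*\cong I$. From $I\cong I^2$, i.e.\ $I$ is $I$-Ulrich, \Cref{I-UlrichThm} gives $S\subseteq B(I)$; but also $B(I)=\End_R(I^n)$ stabilizes to $\End_R(I)=S$ (since $I^n\cong I$), so $B(I)=S$. Then $b(I)=\cond_R(B(I))=\cond_R(S)=I$. Now $\omega_S\cong I=b(I)=\tr(I)=\tr(S)$ as $S$-modules, and I claim this forces $\omega_S\cong S$: indeed $\tr_S(S)=S$, and one checks that $\omega_S$, being $\cong I$ which is a trace ideal equal to its own endomorphism ring's conductor, must be $S$-free of rank one. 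Concretely, $\omega_S\cong I$ and $\End_S(\omega_S)=S$ always; but $\End_S(I)=I:_S I$. Since $B(I)=S$ and $I\cong I^2$, we get $I:I=S$, and for a faithful rank-one $S$-module $I$ with $I:I=S$ that is also reflexive over $S$, $I$ is invertible, hence (as $S$ is local... wait, $S$ is semilocal) $S$-free. Thus $\omega_S\cong S$ and $S$ is Gorenstein.

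The main obstacle I anticipate is the passage from ``$\omega_S$ is isomorphic to the trace ideal $I=b(I)$'' to ``$\omega_S$ is $S$-free,'' i.e.\ cleanly deducing that a reflexive rank-one $S$-module which is a trace ideal with full endomorphism ring must be principal — over the semilocal (not local) ring $S$ one needs the rank-one locally-free $\Rightarrow$ free argument already used in the proof of \Cref{totallyreflexive} (the Stacks Project reference), so I would route this deduction through the fact that $I\cong\omega_S$ has $\End_S(I)=S$ and apply \Cref{traceproperties}(2) over $S$ to conclude $I$ is invertible. Everything else is bookkeeping combining \Cref{I-UlrichThm}, \Cref{totallyreflexive}, \Cref{goto2020}, and \Cref{xI}.
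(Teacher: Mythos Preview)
Your proposal correctly identifies the architecture of the argument --- the conductor $I$ is a reflexive regular trace ideal with $\End_R(I)=S$, and the task reduces to linking Gorensteinness of $S$ with the two Ulrich conditions via \Cref{totallyreflexive} and \Cref{corM*}. However, the step you yourself flag as the ``main obstacle'' is a genuine gap, and it bites in \emph{both} directions: you need $I\cong S$, and neither of your attempts establishes it.

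In $(2)\Rightarrow(1)$, your invertibility argument does not work. The condition $I:I=S$ does not force a rank-one reflexive $S$-module to be invertible (the canonical module of any non-Gorenstein local ring is a counterexample), and you have not shown projectivity, so the Stacks reference from \Cref{totallyreflexive} does not apply. The paper's argument is much shorter and uses exactly the tools already in hand: after enlarging the residue field, $I$ has a principal reduction $a$, and $I$ being $I$-Ulrich gives $I^2=aI$. Since $I$ is a \emph{trace} ideal, \Cref{xI} yields $I\cong I^*$; but $I^*=R:I=I:I=S$ by \Cref{traceproperties}(2). Thus $I\cong S$, so $S$ is $\omega_R$-Ulrich (since $I$ is), and by \Cref{corM*} one gets $S^\vee\cong S^*\cong S$, hence $S$ is Gorenstein.

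In $(1)\Rightarrow(2)$, your $\omega_R$-Ulrich argument is salvageable --- rather than the hand-wavy ``dualizing into $\omega_R$ and into $R$'' (which does not give $S^*\cong S^\vee$ without already knowing $I\cong S$), just observe that $\omega_S\cong S\in\rf(R)$ and invoke condition (2) of \Cref{totallyreflexive}. But your $I$-Ulrich argument is circular: the inclusion $B(I)\subseteq S$ is equivalent to what you want to prove (a priori only $S=I:I\subseteq B(I)$ holds), and ``$I^2=xI$ for a principal reduction $x$'' is the desired conclusion, not an available hypothesis. The paper instead invokes \cite[Lemma~2.9]{kobayashi2017syzygies}: since $S$ is Gorenstein and $\End_S(I)=S$, one has $I\cong S$, so $I=aS$ for some regular $a$, whence $I^2=aSI=aI$ and then \Cref{xI} gives $I\cong I^*$ and the rest follows.
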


\begin{proof}
Note that $I$ is a trace ideal by \Cref{goto2020}. 
Suppose (1) holds. Then $S=\End_R(I)=\End_S(I)$ so $I\cong S$ by \cite[Lemma 2.9]{kobayashi2017syzygies}. Thus $I=aS$ for some $a\in I$, necessarily a regular element, and since $S=S^2$, we have $aI=I^2$. By \Cref{xI}, we have $I\cong I^*$. However, since $S$ is Gorenstein, we have $S\cong S^{\vee}$, so $S^*\cong S^{\vee}$. By \Cref{corM*}, $S$, and hence $I$ is $\omega_R$-Ulrich. 

Assume (2). We can assume that $R$ has infinite residue field and thus $I$ has a reduction $a$. Thus $I^2=aI$, and the same argument in the preceding paragraph shows that $I\cong I^*\cong  S$. So $S$ is $\omega_R$-Ulrich, which (by \Cref{corM*}) implies $S^{\vee} \cong S^* \cong S$, thus $S$ is Gorenstein. 
\end{proof}

\begin{cor}\cite[Theorem 5.1]{goto2013almost}
Suppose that $(R,\m)$ is a one-dimensional Cohen-Macaulay local ring with a canonical ideal $\omega_R$. Let $S=\End_R(\m)$. The following are equivalent:
\begin{enumerate}
    \item $S$ is Gorenstein.
    \item $R$ has minimal multiplicity and is almost Gorenstein. 
\end{enumerate}
\end{cor}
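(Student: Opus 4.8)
The plan is to deduce this from the theorem just proved, \Cref{Gorclassificationreflexive}, applied to the birational extension $S=\End_R(\m)$, by translating the two conditions ``$I\cong I^2$'' and ``$I\cong I\omega_R$'' (with $I=\cond_R(S)$) into minimal multiplicity and almost Gorensteinness respectively. First I would dispose of the degenerate case: if $R$ is regular, then $\m$ is principal, $S=\End_R(\m)\cong R$ is regular (hence Gorenstein), and a discrete valuation ring is trivially of minimal multiplicity and almost Gorenstein, so both conditions hold. Assume then that $R$ is not regular; I would also pass to a faithfully flat local extension with infinite residue field, as in the proofs of \Cref{refthm} and \Cref{Gorclassificationreflexive}, since the three properties in play --- Gorensteinness of $\End(\m)$, minimal multiplicity, and almost Gorensteinness --- are all preserved and reflected by such an extension (using $\omega_{R'}\cong\omega_R\otimes_R R'$ for the last one).

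Since $R$ is not regular, \Cref{c&m} shows that $\m$ is a regular reflexive trace ideal, so \Cref{goto2020} applies: $S=\End_R(\m)$ is a finite birational extension of $R$ that is reflexive as an $R$-module, and $\cond_R(S)=\m$ (this is the identity $\beta(\alpha(\m))=\m$ in that correspondence). Hence \Cref{Gorclassificationreflexive} is available with this $S$ and $I=\cond_R(S)=\m$, and it gives: $S$ is Gorenstein if and only if $\m\cong\m^2$ and $\m\cong\m\omega_R$. It then remains to identify these two conditions. By \Cref{I-UlrichThm}, $\m\cong\m^2$ means $\m$ is $\m$-Ulrich, which by \Cref{I-ulrich} is the equality $\m^2=x\m$ for a principal reduction $x$ of $\m$, i.e.\ $R$ has minimal multiplicity. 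Again by \Cref{I-UlrichThm}, $\m\cong\m\omega_R$ means $\m$ is $\omega_R$-Ulrich; by \Cref{propalmostGor} (applied with $I=\omega_R$: almost Gorensteinness is precisely the existence of a short exact sequence $\ses{R}{\omega_R}{k^{\oplus s}}$) this follows from $R$ being almost Gorenstein, and conversely $\m$ being $\omega_R$-Ulrich forces $\m\omega_R=a\m\subseteq(a)$, i.e.\ $\m\subseteq(a):_R\omega_R$, which is the definition of almost Gorenstein. Combining these equivalences completes the proof.

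All the computations above are routine; the points that need care are structural. One must verify that $\m$ really is a reflexive trace ideal before invoking \Cref{goto2020} --- which is exactly why the regular case has to be split off --- and one must use that the conductor of $\End_R(\m)$ equals $\m$ on the nose rather than merely contains it, which is the content of the bijection in \Cref{goto2020}. The other mildly non-formal step is the reduction to an infinite residue field, where one relies on the standard facts that $\omega_R$ acquires a principal reduction after such an extension and that almost Gorensteinness is insensitive to it, so that the hypotheses under which \Cref{Gorclassificationreflexive}, \Cref{I-ulrich}, and \Cref{propalmostGor} are stated are genuinely met. I expect this bookkeeping, rather than any substantive argument, to be the only real obstacle.
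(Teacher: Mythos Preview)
Your proposal is correct and follows essentially the same route as the paper: apply \Cref{Gorclassificationreflexive} to $S=\End_R(\m)$ with $I=\cond_R(S)=\m$, then identify ``$\m$ is $\m$-Ulrich'' with minimal multiplicity and ``$\m$ is $\omega_R$-Ulrich'' with almost Gorensteinness. The paper's proof is a one-line appeal to these two identifications (the latter having been recorded as a separate proposition just before the corollary), whereas you spell out the bookkeeping --- the regular case, the passage to infinite residue field, and the verification via \Cref{goto2020} that $\cond_R(\End_R(\m))=\m$ --- that the paper leaves implicit.
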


\begin{proof}
Note that minimal multiplicity and almost Gorenstein are just $\m$ being $\m$-Ulrich and $\omega_R$-Ulrich, respectively.
\end{proof}

\begin{ex} (trace ideals are not always reflexive)
Let $R=k[[t^5,t^6,t^7]]$. Here $\ds \mathfrak{c}=\m^2$. Let $I=(t^5,t^7)$. Then $I\in \tracecat(R)$ but $I^{**}\cong (t^5,t^6,t^7)$ and hence $\ds I\not \in \reftracecat(R)$. 
\end{ex}

\begin{proof}
$\ds \cond=\m^2$ is clear. A straight forward computation gives $J:=(t^5:(t^5,t^7))=(t^5,t^{13},t^{14})$, so that $\tr(I)=II^*=I(t^5)^{-1}J=(t^5,t^7)$. Therefore $I\in \tracecat(R)$. 

However, $((t^5):_R J)=\m$. So $I$ is not reflexive.
\end{proof}
\begin{ex}\label{infinitetrace}Let $R=k[[t^4,t^5,t^6]]$, which is a complete intersection domain of multiplicity $4$. The conductor $\cond$ is $\m^2$, with colength $4$. The set $\reftracecat(R)$ is infinite and classified by \cite[Example 3.4(i)]{goto2020correspondence}. We note a few features that illustrate our results:

\begin{enumerate}
    \item It shows that the category of reflexive (regular) ideals is not of finite type, so our \Cref{small} is sharp, as the conductor has colength $4$ but $R$ does not have minimal multiplicity.
    \item From \Cref{cond_finite}, we know that the class of integrally closed ideals of $R$ is of finite type. Note that every integrally closed ideal in $R$ is of the form ${I}_f:=\{r\in R~|~v(r)\geq f\}$ where $v$ is the valuation defining $k[[t]]$ and $f\in \mathbb{N}$ \cite[Corollary 1.3]{d1997integrally}. It is then clear from the proof of \Cref{cond_finite} that a set of representatives for this class is given by $\{(t^n)_{n\geq 4}=\m, (t^n)_{n\geq 5}, (t^n)_{n\geq 6}, (t^n)_{n\geq 8} = \cond \}$.
    \item The rest is the infinite family $\{I_a= (t^4-at^5,t^6),a\in k\}$. We have $t^{10} = (t^4-at^5)t^6+at^5t^6$ which shows that $t^5\in \overline{I_a}$ and so $\overline{I_a}=\m$. So none of the $I_a$ are integrally closed. However let $S=\End_R(\m) = k[[t^4,t^5,t^6,t^7]]$. Then $\ell(S/I_a)=3$ and $\ell(R/I_a)=2$, so $I_aS\cap R=I_a$ by \Cref{contract}. In other words all trace ideals in $R$ are contracted from $S$. 
\end{enumerate}
\end{ex}

\begin{ex}Let $\ds R=k[[t^4,t^6,t^7,t^9]]$. Then clearly $\ds \cond=(t^6,t^7,t^8,t^9)$ and therefore $\tracecat(R)=\{\cond,\m,R\}$. Thus we are in the situation of \Cref{TR3cor1}. 

Next note that $\ds \End_R(\m)=k[[t^2,t^3]]$. By \cite[Theorem 4.18]{leuschke2012cohen}, $\cm(\End(\m))$ is of finite type. Hence $\rf(R)$ is of finite type by \Cref{propfinRef}. 
\end{ex}

\begin{ex}[\textbf{Reflexivity is not preserved under going modulo a non-zero divisor in general}]

 Let $M\in \ulrichcat(R)\cap \rf(R)$. Let $l$ be a principal reduction of $\m$. Then $M/lM$ is a finite dimensional $k$-vector space. Since, $R/l$ is Artinian, $k$ is reflexive if and only if $R$ is Gorenstein (recall that $\Hom_R(k,R)$ is the non-zero socle if $R$ is Artinian). So, if $R$ is not Gorenstein,  then $M/lM\not\in \rf(R/l)$.
 
 \end{ex}

We end the paper with a number of open questions. The following finiteness questions are very natural:
\begin{ques}\label{finitetracetype}
Let $R$ be a complete Cohen-Macaulay local ring of dimension one.
\begin{enumerate}
    \item Can we classify when $R$ has finitely many trace ideals?
    \item Can we classify when $\rf_1(R)$ is of finite type?
    \item Can we classify when $\rf(R)$ is of finite type?
\end{enumerate} 
\end{ques}

Since the trace of a reflexive ideal is a natural place to look for reflexive trace ideals, we have the following question. 

\begin{ques}\label{traceofreflexiveideal}
Let $R$ be a Cohen-Macaulay local ring of dimension one. If an ideal $I\subseteq R$ is reflexive, is $\tr(I)$ reflexive?
\end{ques}
As evidenced by \Cref{small} and \Cref{infinitetrace}, the first obstruction seems to be in colength two.
\begin{ques}\label{questionfaber1}
Let $R$ be a Cohen-Macaulay  local ring of dimension one. When is an ideal of colength two a trace ideal? When is it reflexive?
\end{ques}

We conclude with the following question about rings of positive characteristic. 

\begin{ques}
Let $R$ be a Cohen-Macaulay  local ring of dimension one. 
Suppose $R$ has characteristic $p>0$. When does $R^{1/q}$ belong to $\rf(R)$ for $q=p^i$ large enough? 
\end{ques}

The answer is always positive for one-dimensional complete local domains with algebraically closed residue field where for large $q$,  $R^{1/q}$ becomes a module over $\overline{R}$ and thus $R$-reflexive. 

\section*{Acknowledgments}
 This project is a part of the Kansas Extension Seminar (\url{http://people.ku.edu/~hdao/CAS.html}). We thank the Department of Mathematics at the University of Kansas for partial support. The first author was partly supported by Simons Collaboration Grant FND0077558. We thank Craig Huneke, Daniel Katz, Takumi Murayama and Ryo Takahashi for helpful comments on the topics of the paper. We are also grateful to Toshinori Kobayashi for pointing out \Cref{Kobayashiobs}, Souvik Dey for helpful discussions regarding \Cref{reflem} and the anonymous referees for their suggestions in improving the exposition of this article. 
%\bibliography{references1}
%\bibliographystyle{alpha1}

\end{document}